\pgfplotsset{compat=newest}
\newtheorem{theorem}{Theorem}[section]
\newtheorem{prop}[theorem]{Proposition}
\newtheorem{lemma}[theorem]{Lemma}
\newtheorem{cor}[theorem]{Corollary}
\newtheorem{conj}[theorem]{Conjecture}
\newtheorem{claim}[theorem]{Claim}
\newtheorem{fact}[theorem]{Fact}
\newtheorem{obs}[theorem]{Observation}
\theoremstyle{definition}
\newtheorem{definition}[theorem]{Definition}
\newtheorem*{defn-non}{Definition}
\newtheorem{problem}[theorem]{Problem}
\definecolor{rosepink}{RGB}{255,102,204}
\definecolor{dateplum}{HTML}{993366}
\definecolor{darkdateplum}{RGB}{128,0,32}
\definecolor{lightdateplum}{RGB}{219,112,147}
\definecolor{darkred}{RGB}{139,0,0}
\definecolor{lightred}{RGB}{240,130,100}
\definecolor{curve1}{RGB}{228,26,28}    
\definecolor{curve2}{RGB}{55,126,184}   
\definecolor{curve3}{RGB}{77,175,74}    
\definecolor{curve4}{RGB}{152,78,163}   
\definecolor{curve5}{RGB}{255,127,0}    
\definecolor{limitcurve}{RGB}{0,0,0}    
\newtheorem{rmk}[theorem]{Remark}
\newlist{Case}{enumerate}{3}
\setlist[Case, 1]{%
    label           =   {\bfseries Case \arabic*.},
    labelindent=1em ,labelwidth=1cm, labelsep*=1em, leftmargin =!
}
\setlist[Case, 2]{%
    label           =   {\bfseries Subcase \arabic{Casei}.\arabic*.},
    labelindent=-1em ,labelwidth=1cm, labelsep*=1em, leftmargin =!
}
\setlist[Case, 3]{%
    label           =   {\bfseries Subsubcase \arabic{Casei}.\arabic{Caseii}.\arabic*.},
    labelindent=-1em ,labelwidth=1cm, labelsep*=1em, leftmargin =!
}
\newenvironment{poc}{\begin{proof}[Proof of claim]}{\end{proof}}
\newcommand{\C}[1]{{\protect\mathcal{#1}}}
\newcommand{\B}[1]{{\bf #1}}
\newcommand{\I}[1]{{\mathbbm #1}}
\newcommand{\eps}{\varepsilon}
\newcommand{\VC}{\mathrm{VC}}
\title{Interpolating chromatic and homomorphism thresholds}
\author{
Xinqi Huang\thanks{School of Mathematical Sciences, University of Science and Technology of China, Hefei, Anhui 230026,
China.
Emails:
\{huangxq, rong\_ming\_yuan\}@mail.ustc.edu.cn. Xinqi Huang was supported by the National Key Research and
Development Programs of China 2023YFA1010200 and 2020YFA0713100, the NSFC under Grants No. 12171452 and No. 12231014, and the Innovation Program for Quantum Science and Technology 2021ZD0302902. Mingyuan Rong was supported by National Key Research and Development Program of China 2023YFA1010201
and the NSFC under Grants No. 12125106.
}
\and
Hong Liu\thanks{Extremal Combinatorics and Probability Group (ECOPRO), Institute for Basic Science (IBS), Daejeon, South Korea. Emails: {\texttt \{hongliu, zixiangxu\}@ibs.re.kr}. Supported by the Institute for Basic Science (IBS-R029-C4).}
\and
Mingyuan Rong\footnotemark[1]
\and
Zixiang Xu\footnotemark[2]
}
\begin{document}
\date{}
\maketitle
\begin{abstract}
The problem of chromatic thresholds seeks for minimum degree conditions that ensure dense $H$-free graphs to have a bounded chromatic number, or equivalently a bounded size homomorphic image. The strengthened homomorphism thresholds problem further requires that the homomorphic image itself is $H$-free. The purpose of this paper is two-fold. 

First, we define a generalized notion of threshold which encapsulates and interpolates chromatic and homomorphism thresholds via the theory of VC-dimension. Our first main result shows a smooth transition between these two thresholds when varying the restrictions on homomorphic images. 
In particular, we proved that for \(t \geq s \geq 3\) and $\varepsilon>0$, if \(G\) is an $n$-vertex \(K_s\)-free graph with VC-dimension $d$ and $\delta(G) \geq \left(\frac{(s-3)(t-s+2)+1}{(s-2)(t-s+2)+1} + \varepsilon\right)n\), then \(G\) is homomorphic to a \(K_t\)-free graph \(H\) with \(|H| = O_{s,t,d,\varepsilon}(1)\). Moreover, we construct graphs showing that this minimum degree condition is optimal. This extends and unifies the results of Thomassen, {\L}uczak and Thomass\'e, and Goddard, Lyle and Nikiforov, and provides a deeper insight into the cause of existences of homomorphic images with various properties.

Second, we introduce the blowup threshold $\delta_{\textup{B}}(H)$ as the infimum $\alpha$ such that every $n$-vertex maximal $H$-free graph $G$ with $\delta(G)\ge\alpha n$ is a blowup of some $F$ with $|F|=O_{\alpha,H}(1)$. This notion strengthens homomorphism thresholds. While the homomorphism thresholds for odd cycles remain unknown, we prove that $\delta_{\textup{B}}(C_{2k-1})=\frac{1}{2k-1}$ for any integer $k\ge 2$. This strengthens the result of Ebsen and Schacht and answers a question of Schacht and shows that, in sharp contrast to the chromatic thresholds, 0 is an accumulation point for blowup thresholds.

Our proofs mix tools from VC-dimension theory and an iterative refining process to find the desired homomorphic images, and draw connection to a problem concerning codes on graphs. 
\end{abstract}

\section{Introduction}

\subsection{Overview}
A central problem in extremal graph theory is to understand the properties of graphs that exclude specific subgraphs. These properties include subgraphs count, chromatic number, and more intricate structural characteristics. Regarding  chromatic number, as one of the classical applications of probabilistic methods, Erd\H{o}s~\cite{1959ErdosRandom} proved the existence of graphs with arbitrarily large girth and chromatic number, showing that forbidden subgraph conditions alone is not sufficient to bound chromatic number. Regarding edge count, the cornerstone theorem of Erd\H{o}s, Stone and Simonovits~\cite{1966ES,1946ErodsBAMS} asserts that the maximum number of edges in an $n$-vertex $H$-free graph is $(1-\frac{1}{\chi(H)-1}+o(1))\binom{n}{2}$, where $\chi(H)$ is the chromatic number of $H$. Back to chromatic number, Andr\'{a}sfai, Erd\H{o}s, and S\'{o}s~\cite{1974ErdosSos} proved that every $n$-vertex $K_{s}$-free graph with minimum degree larger than $\frac{3s-7}{3s-4} n$ has chromatic number at most $s-1$. This leads to the concept of
\emph{chromatic threshold}, first introduced by Erd\H{o}s and Simonovits~\cite{1973ErdosSimonovits} in 1973, which studies the optimal minimum degree  that guarantees $H$-free graphs to have bounded chromatic number. Formally, the chromatic threshold of $H$ is defined as
$$\delta_{\chi}(H):=\inf\big\{\alpha\ge 0:\exists~C=C(\alpha,H)\  \textup{s.t.~} \forall~n\textup{-vertex\ }H\textup{-free}~G,~\delta(G)\ge \alpha n\Rightarrow \chi(G)\le C \big\}.
$$
The first non-trivial case arises when $H$ is a triangle. A construction of Hajnal utilizing Kneser graphs shows that $\delta_{\chi}(K_3)\ge \frac{1}{3}$~\cite{1973ErdosSimonovits}. A matching upper bound was proved by Thomassen~\cite{2002Thomassen}. Later, Goddard and Lyle~\cite{2011JGTKrChromatic} and independently Nikiforov~\cite{2010arxivKrfree} solved the problem for cliques: $\delta_{\chi}(K_s)= \frac{2s-5}{2s-3}$. For odd cycles $C_{2k-1}$, Thomassen~\cite{2007OddCycleChromatic} proved that $\delta_{\chi}(C_{2k-1})=0$ for any $k\ge 3$. This topic has attracted significant attention~\cite{2011Unpubilished,1982Haggkvist,1995DMJin, 2010ColoringViaVCDim}. Finally, Allen, B\"{o}ttcher, Griffiths, Kohayakawa, and Morris~\cite{2013advAllChromatic} determined the chromatic thresholds for all graphs $H$.

Given graphs $G$ and $F$, we say that $G$ is \emph{homomorphic} to $F$, denoted by $G\xrightarrow{\textup{hom}} F$, if there exists a homomorphism $\varphi:V(G)\rightarrow V(F)$ that preserves adjacencies, that is, if $uv\in E(G)$, then $\varphi(u)\varphi(v)\in E(F)$. Note that having chromatic number $k$ is equivalent to being homomorphic to a clique $K_{k}$. Thomassen~\cite{2002Thomassen} proposed the problem of what minimum degree guarantees an $H$-free graph to have a bounded homomorphic image that itself is also $H$-free. Formally, the \emph{homomorphism threshold} of $H$ is defined as
$$
    \delta_{\textup{hom}}(H):=\inf\big\{\alpha\ge 0:\exists~H\textup{-free}~F=F(\alpha,H)\  \textup{s.t.~}\forall~n\textup{-vertex\ }H\textup{-free}~G,~\delta(G)\ge \alpha n\Rightarrow G\xrightarrow{\textup{hom}} F  \big\}.
$$
By definition, $\delta_{\textup{hom}}(H)\ge\delta_{\chi}(H)$. {\L}uczak~\cite{2006CombTriangleHom} first solved the triangle case, and for any $s\ge 3$, Goddard and Lyle~\cite{2011JGTKrChromatic} proved that $\delta_{\textup{hom}}(K_{s})=\delta_{\chi}(K_s)=\frac{2s-5}{2s-3}$. The proofs of {\L}uczak~\cite{2006CombTriangleHom}, and Goddard and Lyle~\cite{2011JGTKrChromatic} employed Szemer\'{e}di's regularity lemma~\cite{1978OriginalRegularity}, and therefore the size of the $K_s$-free homomorphic image $F$ in their result is extremely large. Via a clever probabilistic argument, Oberkampf and Schacht~\cite{2020CPCProb} significantly improved the size of the homomorphic image. The best known bound to date is due to Liu, Shangguan, Skokan and Xu~\cite{2024GraphToGeom}. Compared to chromatic threshold, not much is known besides cliques for homomorphism threshold. For odd cycles, Letzter and Snyder~\cite{2019JGTC3C5} showed that $\delta_{\textup{hom}}(C_{5})\le\frac{1}{5}$, and later Obsen and Schacht~\cite{2020COMBHomoOddCycle} extended it to $\delta_{\textup{hom}}(C_{2k-1})\le\frac{1}{2k-1}$ for any $k\ge 3$. Very recently, using tools from topology, Sankar~\cite{2022Maya} provided the first instance of separation between chromatic and homomorphism threshold: $\delta_{\textup{hom}}(C_{2k-1})>k^{-(8+o(1))k}>0=\delta_{\chi}(C_{2k-1})$ for any $k\ge 3$. In general, studying the structural properties of dense graphs that do not contain small odd cycles is a topic of great interest. We refer the readers to~\cite{2003Bollobas,2024SIDMAJanzer,2023Skokan3colorable,1998JGT,2024Hou,1982Haggkvist,2024Wang,2004JGTNikiforov,2024PengC2k+1,2024JGTPeng} and the references therein.

\subsection{Interpolating chromatic and homomorphism threshold via VC-dimension}

The starting point of our work is to understand why cliques have the same chromatic and homomorphism thresholds $\delta_{\chi}(K_s)=\delta_{\textup{hom}}(K_{s})$, even though homomorphism thresholds have extra restrictions on homomorphic images. As we shall see, this `coincidence' is closely related to the \emph{Vapnik-Chervonenkis dimension} (\emph{VC-dimension} for short). The VC-dimension is a key concept in statistical learning theory, which reflects the complexity of patterns a model can capture and holds significance across various fields including machine learning and computational complexity. Recently, an increasing number of works~\cite{2024BVCSunflower,2025UniformBVC,2019DCGBVCEH,2021ErdosSchur,2023BVCSunflower,2024FranklPach,2007JACMZ,2023BVCErdosHajnal,2023SukMatchingLemma} have recognized the application of VC-dimension in extremal combinatorics. Given a set system $\mathcal{F} \subseteq 2^V$ on the ground set $V$, its VC-dimension is the largest size of subset $S\subseteq V$ such that for every $S'\subseteq S$, there exists a member $F_{S'}\in\mathcal{F}$ such that $F_{S'}\cap S=S'$. The VC-dimension of a graph $G$, denoted by $\textup{VC}(G)$, is defined to be the VC-dimension of the set system $\{N_{G}(v):v\in V(G)\}$.

In an unpublished work, {\L}uczak and Thomass{\'e}~\cite{2010ColoringViaVCDim} discovered an interesting phenomenon that VC-dimension heavily affects the chromatic number of a dense triangle-free graph. More precisely, they showed that an $n$-vertex triangle-free graph $G$ with bounded VC-dimension and $\delta(G)=\Omega(n)$ must have bounded chromatic number. This shows that the bottleneck for $\delta_{\chi}(K_3)$ getting stuck at $\frac{1}{3}$ is the VC-dimension. To look deeper into the influence of VC-dimension for thresholds, Liu, Shangguan, Skokan and Xu~\cite{2024GraphToGeom} introduced the following natural variant with bounded VC-dimension:
\begin{align*}
    \delta_{\chi}^{\textup{VC}}(H) :=  \inf\big\{\alpha\ge 0: &~\forall d\in\mathbb{N}, \exists~C=C(\alpha,H,d)\ 
     \textup{s.t.~} \forall~n\textup{-vertex\ }H\textup{-free}~G,\\&~\VC(G)\le d,~\delta(G)\ge \alpha n\Rightarrow \chi(G)\le C \big\},
\end{align*}
and
\begin{align*}
    \delta_{\textup{hom}}^{\textup{VC}}(H) :=  \inf\big\{\alpha\ge 0: &~\forall d\in\mathbb{N}, \exists~H\textup{-free}~\textup{graph}~F=F(\alpha,H,d)\ 
     \textup{s.t.~} \forall~n\textup{-vertex\ }H\textup{-free}~G,\\&~\VC(G)\le d,~\delta(G)\ge \alpha n\Rightarrow G\xrightarrow{\textup{hom}} F  \big\}.
\end{align*}
The above result of {\L}uczak and Thomass{\'e}~\cite{2010ColoringViaVCDim} implies that $\delta_{\chi}^{\textup{VC}}(K_3)=0$. This was recently generalized by Liu, Shangguan and Xue~\cite{2025StabChromaticThresholds} who showed that $\delta_{\chi}^{\textup{VC}}(K_s)=\frac{s-3}{s-2}<\frac{2s-5}{2s-3}=\delta_{\chi}(K_s)$.

In contrast, Liu, Shangguan, Skokan and Xu~\cite{2024GraphToGeom} observed that the homomorphic thresholds of cliques do not decrease under the additional assumption of bounded VC-dimension (see~\cref{rmk:Kr}):
\begin{equation}\label{eq:VC-effect}
   \delta_{\textup{hom}}^{\textup{VC}}(K_s)=\delta_{\textup{hom}}(K_s)=\delta_{\chi}(K_s)=\frac{2s-5}{2s-3}>\frac{s-3}{s-2}=\delta_{\chi}^{\textup{VC}}(K_s).
\end{equation}

Recall that the difference between $\delta_{\chi}$ and $\delta_{\textup{hom}}$ is the restriction on homomorphic images. We introduce the following version of thresholds, allowing varying restrictions on homomorphic images. This generalization encapsulates and unifies all of the above variants. Given families of graphs $\mathcal{G}_{1},\mathcal{G}_{2}$, we define the \emph {bounded-VC homomorphism thresholds} of $(\mathcal{G}_{1},\mathcal{G}_{2})$ as
\begin{align*}
    \delta_{\textup{hom}}^{\textup{VC}}(\mathcal{G}_{1};\mathcal{G}_{2}) :=  \inf\big\{\alpha\ge 0: &~\forall d\in\I N, \exists~\mathcal{G}_{2}\textup{-free}~\textup{graph}~F=F(\alpha,\mathcal{G}_{1},\mathcal{G}_{2},d)\ 
     \textup{s.t.~} \forall~n\textup{-vertex\ }\mathcal{G}_{1}\textup{-free}~G,\\&~\VC(G)\le d,~\delta(G)\ge \alpha n\Rightarrow G\xrightarrow{\textup{hom}} F  \big\}.
\end{align*}

Our first result resolves this generalized problem for all pairs of cliques.

\begin{theorem}\label{thm:KsKt}
    For any positive integers $t\ge s\ge 3$, we have 
    $$\delta_{\textup{hom}}^{\textup{VC}}(K_{s};K_{t})=\frac{(s-3)(t-s+2)+1}{(s-2)(t-s+2)+1}.$$
\end{theorem}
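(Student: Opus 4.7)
The plan is to prove matching upper and lower bounds.

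\textbf{Lower bound.} I would construct, for each $\varepsilon > 0$, an $n$-vertex $K_s$-free graph $G$ of bounded VC-dimension with $\delta(G) \ge \left(\frac{(s-3)(t-s+2)+1}{(s-2)(t-s+2)+1} - \varepsilon\right)n$ admitting no homomorphism to any $K_t$-free graph whose size depends only on $s,t,d,\varepsilon$. A natural candidate is a suitable (possibly iterated) blowup of a primitive graph $F$ built around $(s-2)(t-s+2)+1$ carefully arranged ``slots'', chosen so that each vertex's non-neighborhood occupies exactly $t-s+2$ slots. At the extremes, this should recover the Goddard--Lyle--Nikiforov construction witnessing $\delta_\chi(K_s) = \frac{2s-5}{2s-3}$ when $t = s$, and the Liu--Shangguan--Xue construction witnessing $\delta_\chi^{\textup{VC}}(K_s) = \frac{s-3}{s-2}$ as $t \to \infty$, thereby interpolating between them.

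\textbf{Upper bound.} Given $G$ as in the hypotheses, I would build a bounded-size $K_t$-free homomorphic image $H$ by an iterative equivalence-refinement procedure. The procedure begins by partitioning $V(G)$ into neighborhood-type classes, and at each iteration refines a class whenever its members disagree on a newly added ``test set'' drawn from common neighborhoods of small cliques in $G$. Bounded VC-dimension, via the Sauer--Shelah lemma (or Haussler's packing lemma), ensures that the number of classes stays bounded throughout, so the procedure terminates with $|H| = O_{s,t,d,\varepsilon}(1)$ together with a canonical homomorphism $G \to H$.

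The heart of the argument is showing that $H$ is $K_t$-free. Assume for contradiction that classes $c_1,\dots,c_t$ span a $K_t$ in $H$. Choose representatives $v_1,\dots,v_{s-2}$ from $c_1,\dots,c_{s-2}$ forming a $K_{s-2}$ in $G$; their common neighborhood $W$ is independent in $G$ by $K_s$-freeness. I would then run an extension argument through $t-s+2$ rounds, one per remaining class $c_{s-1},\dots,c_t$, where at each round the minimum-degree hypothesis forces a controlled fraction of the pre-image of $c_i$ to land inside $W$ and remain compatible with previous rounds. The threshold $\frac{(s-3)(t-s+2)+1}{(s-2)(t-s+2)+1}$ is calibrated so that all $t-s+2$ rounds can be sustained simultaneously; at the end, two surviving vertices in $W$ are forced to be adjacent in $G$, producing the desired $K_s$ and the contradiction. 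Setting up this iterated extension in a way that is compatible with the refinement procedure, and tracking the exact degree loss across $t-s+2$ rounds so that the threshold emerges as sharp, is the main technical obstacle.
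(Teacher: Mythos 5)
Your two-part strategy matches the paper's, and the upper-bound skeleton (VC-based partition, refinement, then analyzing a putative $K_t$ in the quotient) is in the right spirit, but both halves have genuine gaps. The lower-bound plan is broken as stated: if $G$ is a blowup of a bounded-size primitive graph $F$, then $G\xrightarrow{\textup{hom}}F$, and since $G$ is $K_s$-free so is $F$, hence $F$ is $K_t$-free (as $t\ge s$) and of bounded size --- i.e.\ $F$ itself is the forbidden homomorphic image. The whole difficulty of the lower bound is to build a $K_s$-free, bounded-VC graph \emph{none} of whose bounded quotients is $K_t$-free, and your sketch supplies no mechanism for that. The paper does this (after reducing to $s=3$ by adding $s-3$ complete parts) with a $t$-partite graph whose parts are indexed by vectors in $[m]^{k-1}$ and whose adjacency depends on longest common prefixes; the crucial step is a good/bad-vertex count showing that in any $f$-coloring most vertices have every relevant ``sibling set'' meeting their own color class, which forces all $\binom{t}{2}$ edges among $t$ color classes. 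Nothing in your proposal plays this role.

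On the upper bound, two concrete problems. First, representatives $v_1,\dots,v_{s-2}$ of pairwise-adjacent classes $c_1,\dots,c_{s-2}$ need \emph{not} form a $K_{s-2}$ in $G$: adjacency in $H$ only says some edge exists between the classes. The paper circumvents this by fixing, for each pair of classes, an actual adjacent pair $u^i_j,u^j_i$ and working with the common neighborhoods $N_\ell$ of all chosen vertices in $U_\ell$, which stay large because neighborhoods within a class differ by at most $\varepsilon n/(Cr!)$. Second, your proposed contradiction --- that ``two surviving vertices in $W$ are forced to be adjacent'' --- is not forced by the degree condition: $W$ is an independent set of size roughly $n/((s-2)(t-s+2)+1)$, and each of its vertices is allowed a non-neighborhood of $(t-s+2)$ times that size, so independence of $W$ is numerically consistent with the hypotheses. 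The paper's actual mechanism is a global weight argument (its Theorem 1.4): encode each vertex by the indicator vector of which $N_\ell$ it lies in, show via an iteratively chosen ``heavy'' $K_{s-2}$ that vertices in its common neighborhood have weight at most $s-2$, and deduce $\sum_x\|\boldsymbol{v}(x)\|\le\frac{(s-3)(t-s+2)+1}{(s-2)(t-s+2)+1}tn$, contradicting the column-sum lower bound $\sum_\ell|N_\ell|$. Your ``$t-s+2$ rounds, one per remaining class'' does not map onto this, and as described it is an assertion rather than an argument; the threshold would not ``emerge as sharp'' without the weight bookkeeping.
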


While VC-dimension does not appear to directly affect $\delta_{\textup{hom}}(K_{s})$, \cref{thm:KsKt} provides a finer picture on~\eqref{eq:VC-effect} revealing how it enters the game. In particular, \cref{thm:KsKt} gives a smooth interpolation between $\delta_{\textup{hom}}^{\textup{VC}}(K_s)$ and $\delta_{\chi}^{\textup{VC}}(K_s)$  as follows (see~\cref{fig:3d-curves} for a depiction):
$$
\delta_{\textup{hom}}^{\textup{VC}}(K_s)=\delta_{\textup{hom}}^{\textup{VC}}(K_s; K_s)=\frac{2s-5}{2s-3}>\delta_{\textup{hom}}^{\textup{VC}}(K_s; K_{s+1})>\cdots >\frac{s-3}{s-2}=\lim_{t\to\infty}\delta_{\textup{hom}}^{\textup{VC}}(K_s; K_t)=\delta_{\chi}^{\textup{VC}}(K_s).
$$

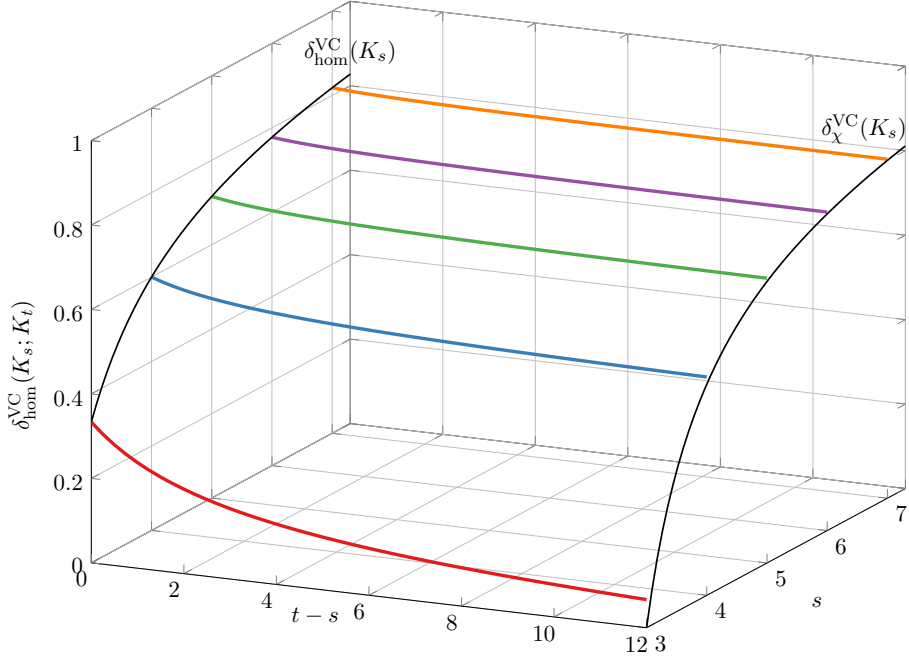
\begin{figure}[ht]
\centering
\begin{tikzpicture}[scale=0.8] 
\begin{axis}[
    width=0.9\textwidth,
    height=12cm,
    view={25}{20},
    xlabel={$t-s$}, xlabel style={anchor=south east},
    ylabel={$s$}, ylabel style={anchor=north west},
    zlabel={$\delta_{\textup{hom}}^{\textup{VC}}(K_s;K_t)$},
    xmin=0, xmax=12,
    ymin=3, ymax=7.3,  
    zmin=0, zmax=1,
    grid=major,
    legend style={at={(0.5,1.05)},anchor=south},
]

\addplot3[
    color=curve1,
    ultra thick,
    samples=50,
    samples y=0,
    domain=3:15,
] (x-3,3,{1/x});

\addplot3[
    color=curve2,
    ultra thick,
    samples=50,
    samples y=0,
    domain=4:16,
] (x-4,4,{(x-1)/(2*x-3)});

\addplot3[
    color=curve3,
    ultra thick,
    samples=50,
    samples y=0,
    domain=5:17,
] (x-5,5,{(2*x -5)/(3*x -8)});

\addplot3[
    color=curve4,
    ultra thick,
    samples=50,
    samples y=0,
    domain=6:18,
] (x-6,6,{(3*x -11)/(4*x -15)});

\addplot3[
    color=curve5,
    ultra thick,
    samples=50,
    samples y=0,
    domain=7:19,
] (x-7,7,{(4*x -19)/(5*x -24)});

\addplot3[
    color=limitcurve,
    thick,
    samples=50,
    samples y=0,
    domain=3:7.3,
] (0,x,{(2*x -5)/(2*x -3)}); 

\addplot3[
    color=limitcurve,
    thick,
    samples=50,
    samples y=0,
    domain=3:7.3,
] (12,x,{(x -3)/(x -2)}); 

\node[anchor=south] at (axis cs:0,7.3,0.83) {$\delta_{\textup{hom}}^{\textup{VC}}(K_s)$};

\node[anchor=south east] at (axis cs:12.2,7.3,0.8) {$\delta_{\chi}^{\textup{VC}}(K_s)$};

\end{axis}
\end{tikzpicture}
\caption{The $x,y,z$-axes correspond to the values of $t-s$, $s$ and $\delta_{\textup{hom}}^{\textup{VC}}(K_s;K_t)$, respectively. The curve on the plane $x=0$ corresponds to $\delta_{\textup{hom}}^{\textup{VC}}(K_s)$; the curves on the planes $y=s$ correspond to $\delta_{\textup{hom}}^{\textup{VC}}(K_s;K_t)$ respectively and their limits when $t\to \infty$ form the curve $\delta_{\chi}^{\textup{VC}}(K_s)$ on the plane $x=+\infty$.}
\label{fig:3d-curves}
\end{figure}

Let $\mathscr{C}_{2k+1}:=\{C_{3},C_{5},\ldots,C_{2k+1}\}$. Notice that the constructions in~\cite{2022Maya} and~\cite{2020COMBHomoOddCycle} both have bounded VC-dimension (see~\cref{prop:Akr}), which implies that $\delta_{\textup{hom}}^{\textup{VC}}(C_{2k+1}) > 0$ and $\delta_{\textup{hom}}^{\textup{VC}}(\mathscr{C}_{2k+1}) = \frac{1}{2k+1}$.
 Our next result shows that there is a similar transition from $\delta_{\textup{hom}}^{\textup{VC}}(\mathscr{C}_{2k+1})=\frac{1}{2k+1}$ to $\delta_{\chi}^{\textup{VC}}(\mathscr{C}_{2k+1}) = 0$.

\begin{theorem}\label{thm:ByproductOne}
For any integer $k\ge 2$, $\delta_{\textup{hom}}^{\textup{VC}}(\mathscr{C}_{2k+1};\mathscr{C}_{2k-1})=0$ and $\delta_{\textup{hom}}^{\textup{VC}}(\{C_{2k+1},C_{2k-1}\};C_{2k-1})=0.$
\end{theorem}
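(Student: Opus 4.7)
Fix $\eps>0$ and $d\in\I N$, and let $G$ be an $n$-vertex graph as in~\cref{thm:ByproductOne}, with $\VC(G)\le d$ and $\delta(G)\ge \eps n$. The plan is to build a bounded-size quotient of $G$ from a trace equivalence associated to a small VC-net, and then verify by a lifting argument that the quotient has no forbidden odd cycle. Both statements are handled in parallel, with the second requiring an additional distinct-vertex refinement.

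Pick a small auxiliary parameter $\eta=\eta(\eps,k,d)\ll \eps$. Applying a standard VC-dimension approximation result (for instance Haussler's packing lemma, or an iterated $\eps$-net argument) to the neighborhood system $\{N(v):v\in V(G)\}$, I extract a constant-size set $X\subseteq V(G)$, $|X|=O_{\eta,d}(1)$, with the property that whenever the traces $\tau(u):=N(u)\cap X$ and $\tau(v):=N(v)\cap X$ coincide, one has $|N(u)\triangle N(v)|\le \eta n$. Let $F$ be the graph whose vertices are the realised traces $\{\tau(v):v\in V(G)\}$ and whose edges are those pairs of traces between which some edge of $G$ exists. By Sauer--Shelah, $|V(F)|\le (e|X|/d)^d=O_{\eps,k,d}(1)$, and $v\mapsto \tau(v)$ is a homomorphism $G\xrightarrow{\textup{hom}} F$.

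It remains to show that $F$ contains no forbidden odd cycle. Suppose toward contradiction that $F$ has an odd cycle $\tau_1\tau_2\cdots\tau_{2\ell+1}\tau_1$ with $2\ell+1\le 2k-1$ (first statement) or $2\ell+1=2k-1$ (second statement). The goal is to lift this cycle greedily to an odd closed walk of length $2\ell+1$ in $G$: given $u_i$ in the trace-class of $\tau_i$, one selects $u_{i+1}$ in the next trace-class with $u_iu_{i+1}\in E(G)$. A naive greedy step can fail when the witness $F$-edge is isolated in the corresponding bipartite graph, so an iterative refinement is used: discard all trace-classes of size below $\eps n/(4k|V(F)|)$ and re-run the construction on the surviving subgraph, repeating until every class is linearly large; the total minimum-degree loss across these rounds is $o_k(\eps)n$, which preserves the hypothesis. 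Once all classes are linearly large, the $\eta$-closeness of neighborhoods within a trace-class combined with $\eta\ll \eps$ shows that any fixed $u_i$ has a positive-density subset of the target trace-class as its $G$-neighbors, so the greedy lift succeeds.

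For the first statement, any such lifted closed odd walk of length $\le 2k-1$ in $G$ already contains an odd cycle of length $\le 2k-1$, contradicting odd girth $\ge 2k+3$. For the second, the lifted walk of length $2k-1$ must additionally consist of pairwise distinct vertices, so as to certify a genuine $C_{2k-1}$ in $G$ rather than a shorter odd cycle, which is permitted by $\{C_{2k+1},C_{2k-1}\}$-freeness; the linear-size guarantee enables this distinct-vertex greedy lift. The hardest part of the plan is precisely this distinctness refinement for the second statement, as it is the mechanism that forces the quotient $F$ to avoid $C_{2k-1}$ specifically rather than merely some shorter odd cycle length.
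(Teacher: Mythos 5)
Your overall strategy (a Haussler-type partition into trace/neighborhood classes, build a quotient graph $F$, lift a hypothetical odd cycle in $F$ back to $G$) is the right starting point, but the lifting step as written does not go through, and the device the paper actually uses to make it work — an \emph{iterated} refinement of the partition — is missing from your proposal. Concretely, the only information encoded by an edge $\tau_i\tau_{i+1}$ of your $F$ is that \emph{some} vertex $u$ in the class of $\tau_i$ has \emph{at least one} neighbor in the class of $\tau_{i+1}$. The $\eta$-closeness $|N(u_i)\triangle N(u)|\le\eta n$ then only gives $|N(u_i)\cap \tau_{i+1}|\ge |N(u)\cap\tau_{i+1}|-\eta n$, which is useless when the witness $u$ itself has only one neighbor in $\tau_{i+1}$'s class; your cleanup that discards small trace-classes does nothing to prevent this, because two $\Omega(n)$-size classes can still be joined by a single $G$-edge. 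So "any fixed $u_i$ has a positive-density subset of the target class as its $G$-neighbors" is not justified, and the greedy lift can stall at the very first step. The paper avoids this by iterating: after the initial Haussler partition $\mathscr{P}_0$, it forms $\mathscr{P}_{i+1}$ by splitting each $\mathscr{P}_i$-class according to the \emph{exact} set of $\mathscr{P}_i$-classes in which a vertex has at least one neighbor. This is a combinatorial (not approximate) invariant, so once $u_i$ sits in a $\mathscr{P}_{i+1}$-class that touches the previous $\mathscr{P}_i$-class, \emph{every} vertex of that $\mathscr{P}_{i+1}$-class has a neighbor in the $\mathscr{P}_i$-class; walking down a length-$\ell$ path in the quotient then drops one level of refinement per step, which is exactly why $k$ levels are needed to lift a $C_{2k-1}$ (this is the paper's Claim about lifting walks in $H_{t+1}$ to walks in $G$ with $x_i\in P_i^{(i+1)}$).

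Two further points you would also need, and which are not resolved in your sketch. First, the lift produces a \emph{walk}, not a cycle: you must close it up, and the paper does this by taking the two "half-walks" out of a common starting class $P_0$ and then case-splitting on whether the two start vertices coincide, aided by the fact that any two vertices in a common $\mathscr{P}_0$-class share $\ge\frac{2\eps}{3}n$ common neighbors (which also forces the trace-classes to be independent sets, another thing your sketch doesn't check). Second, and as you partially anticipated, the second statement is delicate precisely because a non-simple closed walk of length $2k-1$ only certifies a \emph{shorter} odd cycle, which $\{C_{2k+1},C_{2k-1}\}$-freeness permits. Appealing to "linear class size" does not by itself give pairwise distinct lifted vertices, since the neighbor you are forced to choose at a step lives in a prescribed class and is constrained by adjacency; the paper instead tracks which classes of $\mathscr{P}_1$ are singletons (the notion of singular versus non-singular cycles) and shows that any non-simple lift would produce a \emph{non-singular} short odd cycle, which it separately rules out by routing long detours through the large $\mathscr{P}_0$-classes. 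Without some replacement for this singular/non-singular bookkeeping, the distinctness claim in your final paragraph is an assertion rather than an argument.
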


Notably, we develop a general framework (see~\cref{lemma:main}) that not only establishes~\cref{thm:ByproductOne}, but also extends previous results on dense graphs without small odd cycles (see~\cref{thm:SharpThreshold} and~\cref{thm:ByproductThree}). These results will be introduced separately in the following sections.

\subsection{Blowup thresholds}
In the second part of our work, we introduce blowup thresholds, a notion that we believe is more natural than homomorphism thresholds. For a graph $F$, we write $F[t]$ for the \emph{$t$-blowup} of $F$, which is obtained by replacing each vertex of $F$ with an independent set of size $t$, and each edge with a complete bipartite graph $K_{t,t}$. We simply write $F[\cdot]$ when referring to a blowup of $F$ without specifying  blowup size. Clearly, if $G=F[\cdot]$, then $G$ is homomorphic to $F$ and if $G$ is $H$-free, so is $F$. In previous work on homomorphism thresholds for cliques~\cite{2024GraphToGeom,2020CPCProb}, it is in fact demonstrated that under the same min-degree condition, if $G$ is maximal $K_s$-free, then it is a blowup of a bounded size graph. However, for other forbidden graphs, such as odd cycles, the situation is far less clear, which leads us to wonder what min-degree condition forces a maximal $H$-free graph to be a blowup of bounded size graph. Formally, we define the \emph{blowup threshold} of $H$ as
\begin{equation*}
\delta_{\textup{B}}(H):=\inf\big\{\alpha\ge 0:\exists~F(\alpha,H)\  \textup{s.t.~} \forall~n\textup{-vertex\ maximal\ }H\textup{-free}~G,~\delta(G)\ge \alpha n\Rightarrow G=F[\cdot]\big\}.
\end{equation*}
By definitions, we see that $\delta_{\textup{B}}(H)\ge\delta_{\textup{hom}}(H)\ge\delta_{\chi}(H)$ and equalities hold for cliques, i.e.,~for any $s\ge 3$, $\delta_{\textup{B}}(K_{s})=\delta_{\textup{hom}}(K_{s})=\delta_{\chi}(K_{s})$. Interestingly, while the homomorphism thresholds for odd cycles are difficult to determine, we are able to resolve the blowup thresholds counterparts.
\begin{theorem}\label{thm:SharpThreshold}
   Let $\varepsilon>0$, $k\ge 3$ and $G$ be an $n$-vertex maximal $C_{2k-1}$-free graph. If $\delta(G)\ge (\frac{1}{2k-1}+\varepsilon)n$, then $G=H[\cdot]$ for some $H$, where $|H|\le \textup{tw}_{k}(r)$, $r\le e(d+1)(\frac{12ke}{\varepsilon})^{d}$ and $d\le (2k-1)^3\binom{2k-2}{k-1} 2^{(2k-1)^{2k-2}}$. 

   On the other hand, there exists $n$-vertex maximal $C_{2k-1}$-free graph with minimum degree at least $\frac{n}{2k-1}$ which is not a blowup of any smaller graph.
   
   In other words, 
   $$\delta_{\textup{B}}(C_{2k-1})=\frac{1}{2k-1}.$$
\end{theorem}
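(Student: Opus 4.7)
The proof has two parts. The upper bound combines the general framework (\cref{lemma:main}) with a structural argument exploiting the maximality of $G$; the lower bound is a construction showing no two vertices may be identified into a common blowup part.

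For the upper bound, I would first bound the VC-dimension of $G$: since $G$ is $C_{2k-1}$-free, a direct combinatorial analysis of shattered subsets (controlling how many distinct trace patterns short paths can produce under the $C_{2k-1}$-free constraint) yields $\VC(G) \leq d$ where $d \leq (2k-1)^{3}\binom{2k-2}{k-1}2^{(2k-1)^{2k-2}}$. Next, I would apply the main framework lemma to $G$ with the given min-degree threshold to produce a homomorphism $\varphi: G \to F$ with $|F| \leq \textup{tw}_{k}(r)$ and $r \leq e(d+1)(12ke/\varepsilon)^{d}$. Crucially, the lemma's output $F$ must have odd girth strictly greater than $2k-1$: otherwise an arbitrary blowup of $F$ already contains $C_{2k-1}$, and composing with $\varphi$ would exhibit a $C_{2k-1}$ inside $G$, contradicting the hypothesis.

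The heart of the proof is to promote this homomorphism into a blowup structure using the maximality of $G$. Define the twin relation $u \sim v$ iff $N_G(u) = N_G(v)$; each equivalence class is an independent set, and the quotient $H := G/{\sim}$ automatically satisfies $G = H[\cdot]$, so it remains only to bound $|H|$. I claim that each fiber $\varphi^{-1}(x)$ is contained in a single twin class, which would give $|H| \leq |F| \leq \textup{tw}_{k}(r)$. Suppose for contradiction that $u, v \in \varphi^{-1}(x)$ but some $w \in N(u) \setminus N(v)$. By maximality, since $vw \notin E(G)$, adding $vw$ must create a $C_{2k-1}$, producing a path of length $2k-2$ from $v$ to $w$ in $G$. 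Appending the edge $wu$ yields a walk of length $2k-1$ from $v$ to $u$ in $G$; since $\varphi(u) = \varphi(v) = x$, pushing the walk through $\varphi$ gives a closed walk of length $2k-1$ at $x$ in $F$, which forces an odd cycle of length at most $2k-1$ in $F$, contradicting its odd-girth lower bound. This closes the upper bound.

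For the lower bound, I would construct, for infinitely many $n$, an $n$-vertex maximal $C_{2k-1}$-free graph $G$ with $\delta(G) \geq \frac{n}{2k-1}$ in which no two vertices share the same neighborhood; any such $G$ is manifestly not a blowup of a strictly smaller graph. A natural candidate is a Cayley graph on $\mathbb{Z}_{n}$ (or a slight perturbation of a generalized Andr\'{a}sfai-type graph, cf.\ $A_{k,r}$) whose connection set is chosen so that the graph is $C_{2k-1}$-free, meets the min-degree requirement, and remains vertex-distinguishable after extension to a maximal $C_{2k-1}$-free supergraph. The main obstacle is the structural step of the upper bound: extracting from the framework lemma an $F$ with odd girth exceeding $2k-1$ and then running the closed-walk argument without degeneracy. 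A secondary technical point is the explicit VC-dimension bound for $C_{2k-1}$-free graphs, which requires a careful count of shattered configurations compatible with the odd-cycle constraint.
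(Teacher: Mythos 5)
Your overall architecture matches the paper's: bound $\VC(G)$ via \cref{lemma:Bounded VC-dimension}, invoke \cref{lemma:main} to get a bounded homomorphic image $F$, use maximality of $G$ to promote the homomorphism to a blowup, and give an Andr\'asfai-type construction for the lower bound. But the crucial step of your upper bound — promoting the homomorphism to a blowup — rests on the assertion that $F$ has odd girth strictly greater than $2k-1$, and that assertion is false in general.

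The gap: \cref{lemma:main}(1) only guarantees that $F = H_k(G,\tfrac{\varepsilon}{6k})$ is $C_{2k-1}$-free and \emph{non-singular} $C_{2\ell-1}$-free for $2\le\ell\le k-1$; it does not exclude singular short odd cycles. By \cref{lemma:NoSmallOddCycles}, the hypothesis only kills $C_\ell$ for odd $\ell\in[k,2k-1]$, so for $k\ge4$ the graph $G$ (and hence $F$) may contain $C_3$, $C_5$, etc. Your justification for the odd-girth claim is also reversed: a $C_{2k-1}$ in $F$ would not produce a $C_{2k-1}$ in $G$, since $\varphi\colon G\to F$ only pushes walks forward, not backward. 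The correct finishing move (as in the paper) is: from your closed walk of length $2k-1$ at $x$ in $F$, apply \cref{claim:odd walk to odd cycle} to extract an odd cycle $C$ of length at most $2k-3$ in which some vertex $P$ appears twice in the walk. The two preimages of $P$ in your lifted walk in $G$ are distinct vertices of $G$ lying in the same class, so $P^{(1)}$ has at least two elements, making $C$ \emph{non-singular}, and this is what contradicts \cref{lemma:main}. Without that distinction, the argument simply does not go through for $k\ge4$.

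Two smaller points. First, the paper achieves the blowup conclusion by directly defining $G':=F[\cdot]$ (with part sizes given by $\mathscr{P}_k$) and showing $G'$ is $C_{2k-1}$-free, whence $G=G'$ by maximality; your version — showing every fiber of $\varphi$ lies in a single twin class so that the twin quotient has at most $|F|$ parts — is an equivalent reformulation and would also yield the stated bound once the above gap is repaired. Second, for the lower bound your plan to take a twin-free, $C_{2k-1}$-free Cayley graph and then \emph{extend} it to a maximal one is risky: adding edges can create new pairs of twins. The paper sidesteps this by proving (\cref{prop:Akr}) that $A_{k,r}$ is already maximal $C_{2k-1}$-free and twin-free, so no extension is needed; you should verify maximality directly rather than deferring to a hypothetical extension.
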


The first part of~\cref{thm:SharpThreshold} 
improves the result of Ebsen and Schacht~\cite{2020COMBHomoOddCycle} and answers a question of Schacht~\cite{SchachtPC}. Compared to $\delta_{\chi}(C_{2k-1})=0$, we see a clear difference between $\delta_{\textup{B}}$ and $\delta_{\chi}$. Furthermore, by the result in~\cite{2013advAllChromatic}, we know that $\delta_{\chi}$ takes only values of the form $\{\frac{s-3}{s-2},\frac{2s-5}{2s-3},\frac{s-2}{s-1}: s\in\mathbb{N}\}$ and in particular every number in $[0,1]$ is a jump; whereas~\cref{thm:SharpThreshold} shows that $0$ is an accumulation point for $\delta_{B}$.

Let us remark why blowup thresholds are more natural to consider than homomorphism thresholds. On one hand, it is in a sense easier to establish the lower bound: rather than proving the non-existence of homomorphic images with restrictions for $\delta_{\textup{hom}}$, one only needs to find a \emph{twin-free} construction for $\delta_{\textup{B}}$. Here, twin-free means no two vertices have the same neighborhood. On the other hand, the upper bound problem for $\delta_{\textup{B}}$ connects to current trends on sufficient conditions guaranteeing blowup structure. Such problem has been investigated by several recent works originating from different directions. For instance, the regularity lemma for semialgebraic hypergraphs~\cite{2005JCTASemi,2012Crelle} shows that such hypergraphs are blowup-like, i.e.,~most of the pairs in a regular partition are either complete or empty. Moreover, \cite{2024BlowupVCdensity,2024GraphToGeom} study various density conditions that guarantee the existence of large blowups.

\subsection{Our approach and future directions}

For~\cref{thm:KsKt}, to lower bound $\delta_{\textup{hom}}^{\textup{VC}}(K_s;K_t)$, it boils down to constructing triangle-free graphs with bounded VC-dimension and optimal density such
that all their bounded size homomorphic images have large clique number (see~\cref{lemma: lower bound for VC hom where s=3}). To construct such graphs, we start with two basic building blocks with bounded VC-dimension and then take certain iterative product of their blowups. The challenging part in this construction is to keep the triangle-freeness of the graph while increasing the clique number of all its homomorphic images when taking the iterative product.

To upper bound $\delta_{\textup{hom}}^{\textup{VC}}(K_s;K_t)$, we refine a partiton obtained based on Haussler packing lemma~\cite{1995PackingLemma} and show that this refinement yields a desired bounded size homomorphic image with given clique number. To bound the clique number of this homomophic image, we reduce the problem to the following result regarding codes on graphs, which we believe is of independent interest.

\begin{theorem}\label{thm:transform to vectors}
Let $r,s\in\mathbb{N}$ with \( r , s \geq 3 \)  and $t=r+s-3$. Let \( G \) be an \( n \)-vertex graph with $\delta(G) \geq \frac{(s - 3)(r - 1) + 1}{(s - 2)(r - 1) + 1} n$ and \( \boldsymbol{v}: V(G) \to \mathbb{F}_2^t \). If for any copy of \( K_{s - 2} \) in \( G \) with vertex set \( \{ x_1, x_2, \ldots, x_{s - 2} \} \), 
    \begin{equation}\label{eq:onlyone}
        \left| \{ j \in [t]: \boldsymbol{v}(x_1)_j = \boldsymbol{v}(x_2)_j = \cdots = \boldsymbol{v}(x_{s - 2})_j = 1 \} \right| \leq 1,
    \end{equation}
then $M := \sum_{x \in V(H)} \| \boldsymbol{v}(x) \| \leq \frac{(s - 3)(r - 1) + 1}{(s - 2)(r - 1) + 1} \cdot tn$, where \( \| \boldsymbol{v}(x) \| = \sum_{i = 1}^{t} \boldsymbol{v}(x)_i \) is the \( \ell_1 \)-norm.
\end{theorem}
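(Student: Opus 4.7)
The plan is to proceed by induction on $s \geq 3$ (with $r$ arbitrary), keeping in mind throughout the crucial algebraic identity $1-\alpha = \frac{r-1}{(s-2)(r-1)+1}$ and the relation $(1-\alpha)/(1-\alpha_{s-1,r}) = \alpha$, where $\alpha_{s-1,r} := \frac{(s-4)(r-1)+1}{(s-3)(r-1)+1}$.

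For the base case $s=3$, the hypothesis on $K_{s-2}=K_1$ simply says $\|\boldsymbol{v}(x)\|\le 1$ for every vertex $x$, so $M \le n = \alpha t n$ since $\alpha t = (1/r)\cdot r = 1$.

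For the inductive step, I would first extract the structural lemma that for distinct $i \ne j$, the induced subgraph $G[A_i \cap A_j]$ is $K_{s-2}$-free: a copy of $K_{s-2}$ inside would be contained in both $A_i$ and $A_j$, contradicting the hypothesis. Combined with $\delta(G[A_i \cap A_j]) \ge |A_i\cap A_j| - (1-\alpha)n$ and the Tur\'an observation that a $K_{s-2}$-free graph has minimum degree at most $\frac{s-4}{s-3}$ of its order, this gives $|A_i \cap A_j| \le (s-3)(1-\alpha)n$. Next, for any vertex $v$ with $T_v := \{i : v \in A_i\}$, the subgraph $G[N(v)]$ has $\delta(G[N(v)]) \ge |N(v)| - (1-\alpha)n$, and the identity above translates the hypothesis $|N(v)| \ge \alpha n$ into exactly $\delta(G[N(v)])/|N(v)| \ge \alpha_{s-1,r}$, so $G[N(v)]$ is eligible for the inductive hypothesis with parameters $(s-1,r)$ and dimension $t-1 = r + (s-1) - 3$.

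Restrict $\boldsymbol{v}$ to $G[N(v)]$ and to the coordinates in $T_v$, padding with zeros (or dropping one coordinate if $|T_v|=t$) to reach dimension $t-1$; call the result $\boldsymbol{v}'$. For any $K_{s-3}$ copy $\{x_1,\ldots,x_{s-3}\}$ inside $G[N(v)]$, the set $\{v,x_1,\ldots,x_{s-3}\}$ is a $K_{s-2}$ in $G$, so the original hypothesis forces at most one $j \in T_v$ with $x_1,\ldots,x_{s-3}\in A_j$; thus $\boldsymbol{v}'$ satisfies the hypothesis of the theorem for $(s-1,r)$. The inductive hypothesis yields
\[
\sum_{x\in N(v)}|T_v\cap T_x| \;\le\; \alpha_{s-1,r}(t-1)|N(v)|.
\]
Summing over $v \in V(G)$ converts the left side to $2\sum_{i\in[t]} e(G[A_i])$ and the right to $2\alpha_{s-1,r}(t-1)e(G)$. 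One then combines this upper bound on $\sum_i e(G[A_i])$ with the lower bound $e(G[A_i]) \ge \tfrac{1}{2}|A_i|\bigl(|A_i|-(1-\alpha)n\bigr)$ coming from min-degree, together with the structural intersection bound $|A_i\cap A_j|\le(s-3)(1-\alpha)n$ to control $\sum_i|A_i|^2$; this produces a polynomial inequality in $M$ whose solution gives $M \le \alpha t n$.

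The main obstacle is tightness: a pure Cauchy--Schwarz combination of $\sum_i e(G[A_i]) \le \alpha_{s-1,r}(t-1)e(G)$ with $e(G[A_i]) \ge \tfrac{1}{2}|A_i|^2 - O(|A_i|n)$ gives a quadratic bound on $M$ that is noticeably loose already for $(s,r)=(4,3)$; indeed, using only the pairwise intersection bound together with convexity $\sum_i\binom{|A_i|}{2}\ge n\binom{M/n}{2}$ admits LP configurations strictly above $\alpha t n$. Closing this gap will force one to use the full clique-counting constraint $\sum_i k_{s-2}(G[A_i])\le k_{s-2}(G)$ (an immediate consequence of the hypothesis) paired with Moon--Moser-type lower bounds on $k_{s-2}(G[A_i])$ in terms of $|A_i|$ and the inherited min-degree, or alternatively, to run a local $K_{s-1}$-analysis: for each $K_{s-1}$ copy $Q=\{x_1,\ldots,x_{s-1}\}$, the constraint $\sum_i\binom{|Q\cap A_i|}{s-2}\le s-1$ locally caps $\sum_{v\in Q}f(v)$, and averaging this local bound over the many $K_{s-1}$ copies through each vertex (guaranteed by the min-degree) globalizes to $M\le \alpha t n$. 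Matching the exact constant $\alpha t$ is where the minimum degree value $\frac{(s-3)(r-1)+1}{(s-2)(r-1)+1}$ is used in its sharpest form.
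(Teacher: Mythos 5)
The gap you flagged at the end is real and it is fatal to the argument as written: the inductive aggregation $\sum_i e(G[A_i]) \le \alpha_{s-1,r}(t-1)e(G)$, together with the shell lower bound $e(G[A_i]) \ge \tfrac12|A_i|\bigl(|A_i|-(1-\alpha)n\bigr)$ and the pairwise bound $|A_i\cap A_j|\le(s-3)(1-\alpha)n$, genuinely leaves convexity slack and does not pin $M=\sum_i|A_i|$ down at $\alpha tn$. The two repairs you gesture at (clique-counting, or a local $K_{s-1}$ average) are where the real work lives, and neither is developed, so the proof does not close. Your structural observations up to that point (the $K_{s-2}$-freeness of $G[A_i\cap A_j]$, the inherited min-degree ratio $\alpha_{s-1,r}$ in $G[N(v)]$, the restriction of $\boldsymbol{v}$ to coordinates in $T_v$) are all correct and sensible; the failure is purely in the globalization step.

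The paper takes a different, non-inductive route that sidesteps the aggregation problem. Assuming $M>\alpha tn$ for contradiction, it greedily builds a \emph{single} extremal $K_{s-2}$, namely $x_0,\dots,x_{s-3}$ where $x_0$ has globally maximum weight and each $x_{i+1}$ has maximum weight within $\bigcap_{h\le i}N(x_h)$. The per-step estimate used to show that the next vertex of large weight exists is essentially the same double-count that drives your inductive step (so your intuition about where $\alpha_{s-1,r}$ enters is spot on); but instead of summing over all vertices $v$, one only needs it for the single greedy chain, and the recursion on the normalized weights $a_j$ can be solved exactly, giving $\sum_j\|\boldsymbol{v}(x_j)\|=t(s-3)+1$, the maximum permitted by \eqref{eq:onlyone}. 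This exact saturation forces a rigid block structure on the coordinates, which in turn caps $\|\boldsymbol{v}(w)\|\le s-2$ for every $w\in\bigcap_j N(x_j)$. Finally, stratifying $V(G)$ by the nested shells $V\setminus N(x_0)$, $N(x_0)\setminus N(x_1)$, $\dots$, $\bigcap_h N(x_h)$, each shell has size at most $(1-\alpha)n$ by the degree condition and the last intersection has size at least $(s-2)\delta(G)-(s-3)n$, and plugging in the (already maximal) weights gives $M<\alpha tn$, a contradiction. The upshot: the paper replaces your ``average over all $v$'' by ``locate one extremal $v$-chain and read off the bound from it,'' which is exactly what lets the constant come out tight.
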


Consider \( \boldsymbol{v}: V(G) \to \mathbb{F}_2^t \) as a code on the graph $G$. Note that when $s=4$, condition~\eqref{eq:onlyone} on~\( \boldsymbol{v}\) translates to the condition that any two codewords in~\( \boldsymbol{v}\) have Hamming distance at least $2t-2$ apart when their corresponding vertices are adjacent.

\medskip

For~\cref{thm:SharpThreshold}, the lower bound can be obtained using the so-called Andrasfai graphs. For the upper bound, our approach differs from previous work. For instance, Letzter and Snyder~\cite{2019JGTC3C5} used detailed structural analysis for $C_{5}$-free graphs. Under the same assumptions in~\cref{thm:SharpThreshold}, Ebsen and Schacht~\cite{2020COMBHomoOddCycle} employed probabilistic arguments to show that $G$ has a bounded $C_{2k-1}$-free homomorphic image; this argument however is not enough to show that $G$ is a blowup. Our proof avoids complicated structural discussions by employing tools from the theory of VC-dimension; it is furthermore algorithmic and finds explicitly the graph $H$. 

A key ingredient in our proof is the following. It shows that the VC-dimension of graphs satisfying the conditions in~\cref{thm:SharpThreshold} is bounded. To obtain the desired homomorphic image for~\cref{thm:SharpThreshold}, we iteratively refine the partition provided by~\cref{lemma:Bounded VC-dimension} and~\cref{lemma:Partition} (see~\cref{lemma:main}).

\begin{theorem}\label{lemma:Bounded VC-dimension}
     For integer $k\ge 2$ and any $\varepsilon>0$, let $G$ be an $n$-vertex maximal $C_{2k-1}$-free graph with minimum degree at least $(\frac{1}{2k-1}+\varepsilon)n$.
     Then $\textup{VC}(G)\le(2k-1)^3\binom{2k-2}{k-1}\cdot 2^{(2k-1)^{2k-2}}$.
\end{theorem}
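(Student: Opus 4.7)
Plan. The natural route is by contradiction: suppose $G$ admits a shattered set $S=\{v_1,\ldots,v_D\}$ with $D>(2k-1)^{3}\binom{2k-2}{k-1}\cdot 2^{(2k-1)^{2k-2}}$, and use the associated shattering-witnesses $\{w_{S'}\}_{S'\subseteq S}$, together with the structural consequences of maximality, to build an explicit copy of $C_{2k-1}$ in $G$, contradicting $C_{2k-1}$-freeness. The first step is to exploit maximality: for any non-adjacent $u,w\in V(G)$, adding $uw$ creates a $C_{2k-1}$, so in $G$ there is a $u$--$w$ path of length exactly $2k-2$; fix one such \emph{witness path} $P(u,w)$ for every non-edge $uw$. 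These witness paths are the combinatorial glue that will let us propagate shattering information into a cycle.

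The second step introduces a \emph{type} function $\tau:V(G)\to\{0,1\}^{(2k-1)^{2k-2}}$. I would fix an auxiliary set $R$ of $2k-1$ anchor vertices (for instance, inside the neighborhood-on-$S$ of a single well-chosen shattering-witness) and declare $\tau(x)$ to encode, for each sequence $\sigma\in R^{2k-2}$ (there are exactly $(2k-1)^{2k-2}$ such sequences), a single bit indicating whether $\sigma$ arises as the interior vertex sequence of some witness path with endpoint $x$. $C_{2k-1}$-freeness together with the minimum-degree condition ensures that the relevant witness-path structure near $x$ is sufficiently tree-like up to depth $2k-2$ for this encoding to be well-defined and for its range to have cardinality at most $2^{(2k-1)^{2k-2}}$, as required.

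The third step is a pigeonhole on $\tau(v_1),\ldots,\tau(v_D)$: since $D>(2k-1)^{3}\binom{2k-2}{k-1}\cdot 2^{(2k-1)^{2k-2}}$, there is $T\subseteq S$ with $|T|>(2k-1)^{3}\binom{2k-2}{k-1}$ on which $\tau$ is constant. Extract from $T$ two disjoint pairs $\{a_1,a_2\}$ and $\{b_1,b_2\}$, each realised as the $S$-neighborhood of some shattering-witness $w_1, w_2$, and splice the witness path $P(w_1,w_2)$ of length $2k-2$ with the edges $w_1a_1$ and a matching attachment produced by a further same-type vertex in $T$, to produce a closed walk of length $2k-1$. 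The same-type property of $T$ forces the splice to exist; the factor $\binom{2k-2}{k-1}$ absorbs a pigeonholed choice of \emph{which} position along $P(w_1,w_2)$ the intermediate attachment lands at (only certain positions yield a walk of the correct parity and length), while the cubic $(2k-1)^{3}$ factor is a slack that lets a greedy pruning inside $T$ guarantee that the $2k-1$ vertices of the final walk are pairwise distinct and avoid both $R$ and the interior of $P(w_1,w_2)$, turning the closed walk into a genuine cycle.

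The main obstacle is Step 2: crafting the type function so that its range is exactly of the claimed size yet is strong enough to force a cycle in Step 3. The well-definedness of the encoding, as well as the pigeonhole accounting, rest on nontrivial control of how witness paths of length $2k-2$ fit inside a $C_{2k-1}$-free graph with $\delta(G)>(\frac{1}{2k-1}+\varepsilon)n$; this control should come from Andr\'asfai-type structural observations of the same flavour as those used to pin down $\delta_{\textup{B}}(C_{2k-1})=\tfrac{1}{2k-1}$ in \cref{thm:SharpThreshold}, and quantifying this tree-like behaviour to exactly branching $2k-1$ and depth $2k-2$ is the main technical hurdle. A secondary difficulty is ruling out vertex collisions in the cycle construction, which I expect to be handled cleanly by the $(2k-1)^{3}$ slack rather than by case analysis.
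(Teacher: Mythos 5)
The proposal correctly identifies the opening move (contradiction with a large shattered set, and using maximality to get, for each non-adjacent pair, a witness path of length $2k-2$), but from there it diverges sharply from the paper and, more importantly, it does not give a workable completion. The paper's actual argument does three specific things that your plan is missing: (i) it pairs each shattered element $s_i$ with the unique witness vertex $v_i$ satisfying $N(v_i)\cap S = S\setminus\{s_i\}$, and observes that $K_{2k-1}$-freeness forces $|S_0\cap T_0|\le 2k-2$, so one may assume the $s_i$ and $v_i$ are all distinct; (ii) it applies the Bollob\'as set-pair inequality to the prefix/suffix segments of the witness paths $P_i$, which is exactly what produces the $\binom{2k-2}{k-1}$ factor and yields many \emph{pairwise vertex-disjoint} paths; (iii) it runs an iterated Ramsey-type cleanup (\cref{claim:large set of common neighbors}) on the $2k-2$ ``columns'' of these paths, which is what produces the $2^{(2k-1)^{2k-2}}$ factor (you need this many disjoint paths so that $2k-2$ iterations of taking a $(2k-1)$-st root still leave $\ge 2$ of them). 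With two such paths one forms a $(4k-2)$-cycle $C^*$ in which every long diagonal has $\ge 5k$ common neighbors, and then the minimum-degree condition forces some external vertex $y$ to have $\ge 3$ neighbors on $C^*$, which a short case analysis (\cref{claim:only one even}, \cref{claim:only two odd}) shows is impossible in a graph with no odd cycle of length between $k$ and $2k-1$ (\cref{lemma:NoSmallOddCycles}).

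Your ``type function'' $\tau$ is the crux of your plan, and it is precisely where it breaks down. You never specify what the anchor set $R$ is, why it should exist, or why $\tau$ is well-defined, and you acknowledge as much. More tellingly, your intended accounting of the two combinatorial factors is wrong: you read $2^{(2k-1)^{2k-2}}$ as the cardinality of a type alphabet and $\binom{2k-2}{k-1}$ as a pigeonhole over where a splice lands, but in the actual proof $\binom{2k-2}{k-1}$ is a Bollob\'as bound on intersecting set-pairs and $2^{(2k-1)^{2k-2}}$ is the iterated-Ramsey budget. Since your decomposition of the constant does not correspond to a lemma you prove (or even sketch), there is no way to assemble the final contradiction: the ``splice to a closed $(2k-1)$-walk'' step has no mechanism guaranteeing the walk closes up, has the right parity, and visits distinct vertices, and the ``$(2k-1)^3$ slack absorbs vertex collisions'' claim is asserted, not argued. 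The missing ideas you would need to repair this are: the Bollob\'as set-pair inequality to prune to disjoint witness paths, the common-neighbor Ramsey step to align two of them into a $(4k-2)$-cycle, and the degree-count/case-analysis showing an external vertex with three neighbors on that cycle is impossible given $C_\ell$-freeness for $k\le \ell\le 2k-1$.
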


The proof of~\cref{lemma:Bounded VC-dimension} utilizes Bollobas set-pair inequality~\cite{1965BollobasSetpair} and Ramsey type argument. It would be interesting to extend~\cref{lemma:Bounded VC-dimension} to other forbidden subgraphs. We refer the readers to~\cref{sec:vc-threshold} for further discussion.  

\medskip
{\bf \noindent Notations.} For a vertex $u\in V(G)$, we will use $N_{G}(u)$ (or $N(u)$ if the subscript is clear) to denote the set of neighbors of $u$. For a subset $T\subseteq V(G)$, we will use $G[T]$ to denote the subgraph induced by $T$. For two disjoint subsets $A,B\subseteq V(G)$, we say that $A$ and $B$ are \emph{complete} to each other if every vertex of $A$ is adjacent to every vertex of $B$, and \emph{anti-complete} to each other if no vertex of $A$ is adjacent to any vertex of $B$.  We also say $(A,B)$ \emph{induces} an edge in $G$ if there exists an edge such that one of its endpoint belongs to $A$ and the other belongs to $B$. For integers $i<j$, we use $[i,j]$ to denote the set $\{i,i+1,\ldots,j\}$. For the sake of clarity of presentation, we omit floors and ceilings and treat large numbers as integers whenever this does not affect the argument. Throughout this paper, we always assume that $n$ is sufficiently large whenever this is needed.

\medskip
{\bf \noindent Structure of this paper.} 
In~\cref{sec:KsKt}, we give the lower bound construction for~\cref{thm:KsKt}. We prove~\cref{thm:transform to vectors} and derive from it the upper bound for~\cref{thm:KsKt} in~\cref{sec:upp-bd-KsKt}. All results concerning odd cycles, including~\cref{thm:ByproductOne,thm:SharpThreshold,lemma:Bounded VC-dimension} are proved in~\cref{sec:OddCycles}. Finally, we conclude with some discussions of related problems in~\cref{sec:ConcludingRmks}.

\section{Lower bound constructions for~\cref{thm:KsKt}}\label{sec:KsKt}
To prove the lower bound $\delta_{\textup{hom}}^{\textup{VC}}(K_s; K_t) \geq \frac{(s-3)(t-s+2)+1}{(s-2)(t-s+2)+1}$, we need to construct $n$-vertex $K_s$-free graphs with bounded VC dimension and minimum degree at least $\left( \frac{(s-3)(t-s+2)+1}{(s-2)(t-s+2)+1}- o(1) \right) n$ that do not admit a bounded size $K_t$-free homomorphic image.

The following lemma is the main result of this section. It establishes the lower bound for the case $s=3$, from which the general case follows.

\begin{lemma}\label{lemma: lower bound for VC hom where s=3}
    For any $t \geq 3$ and $\varepsilon > 0$, there exists $d = d(t, \varepsilon)$ such that for any $f \geq 1$, there exists an $n$-vertex $K_{3}$-free graph $G = G(t, \varepsilon, f)$ with VC dimension at most $d$, minimum degree at least $( \frac{1}{t} - \varepsilon ) n$, such that if there is a homomorphism from $G$ to a graph $H$ with $|H| \leq f$, then $H$ contains a copy of $K_t$. In other words, $\delta_{\textup{hom}}^{\textup{VC}}(K_3; K_t) \geq \frac{1}{t}$.
\end{lemma}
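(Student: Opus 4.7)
The plan is to construct $G = G(t,\varepsilon,f)$ by an iterative product of blowups of two basic constant-size building blocks, as foreshadowed in the introduction. The first block $B_1$ should be triangle-free, have minimum-degree-to-order ratio strictly above $1/t$, and have bounded VC-dimension; a concrete candidate is a Cayley graph on $\mathbb{Z}_N$ with connection set an interval of length slightly below $N/t$, which is triangle-free whenever this length is below $N/3$, has density $\approx 1/t$, and has VC-dimension $O(1)$ because each neighborhood is a union of two arithmetic intervals. The second block $B_2$ serves as an \emph{amplifier}: it is a triangle-free graph whose homomorphic images into any $K_t$-free graph are forced to be large (say, a carefully chosen Kneser- or shift-type quotient).

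Starting from $G_0 := B_2$, I would define $G_{i+1}$ by replacing each vertex of $G_i$ with a blowup of $B_1$ and drawing cross-cloud edges according to the adjacencies of $G_i$ filtered through a $B_1$-dependent pattern: a vertex in cloud $u$ is joined to one in cloud $v$ iff $uv \in E(G_i)$ and the in-cloud coordinates obey a prescribed $B_1$-adjacency rule. Set $G := G_T$ for $T = T(f)$ sufficiently large. Along the iteration I would verify three invariants: (i) triangle-freeness is preserved, since any triangle in $G_{i+1}$ projects to a triangle in $G_i$ or in $B_1$, both excluded; (ii) the minimum-degree ratio is preserved up to a $(1-o(1))$ factor and therefore stays above $1/t - \varepsilon$; (iii) the VC-dimension of $G_{i+1}$ remains bounded by a universal $d(t,\varepsilon)$, because neighborhoods at step $i+1$ are constant-depth Boolean combinations of a constant number of $G_i$- and $B_1$-neighborhoods, and iterated bounded Boolean combinations of bounded-VC families have bounded VC-dimension by Sauer--Shelah.

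Finally, I would rule out homomorphisms $\varphi: G_T \to H$ into any $K_t$-free $H$ with $|H| \le f$ by a descent argument. Restricting $\varphi$ to a single $B_1$-cloud and pigeonholing on the $|H|$ possible $\varphi$-values within the cloud produces an induced homomorphism of $G_{T-1}$ into a $K_t$-free graph of size strictly smaller than $|H|$ by a constant factor depending only on $|B_1|$; iterating $T = O(\log f)$ times would shrink the target size below $1$, a contradiction. The main obstacle is the joint design of the product operation to simultaneously enforce (i) and (iii): triangle-freeness forces structural constraints on cross-cloud edges, yet those constraints must be simple enough that the neighborhood family does not gain complexity with iteration. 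Establishing (iii) uniformly in $T$, together with showing that the amplifier $B_2$ retains its $K_t$-forcing property through the iterated blowup-product (i.e., each iteration truly multiplies the minimum order of a bounded $K_t$-free homomorphic image by a constant factor greater than $1$), is the combinatorial crux of the argument.
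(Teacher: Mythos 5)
Your high-level plan (two bounded-VC building blocks combined by an iterated blowup-product) matches the one-sentence description the paper gives of its own construction, but the way you propose to execute it has three genuine gaps, and the paper's actual construction is organized quite differently precisely to avoid them.

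First, your ``amplifier'' $B_2$ is not constructed, and the property you demand of it --- a triangle-free graph all of whose homomorphic images into $K_t$-free graphs are large --- is essentially the conclusion of the lemma itself. Without an explicit $B_2$ the argument is circular. Second, and most seriously, you take the iteration depth $T=T(f)=O(\log f)$ to grow with $f$. Your own justification for the VC bound (neighborhoods are constant-depth Boolean combinations of bounded-VC families) then gives a bound that grows with $T$, hence with $f$, whereas the lemma requires $d=d(t,\varepsilon)$ \emph{independent of} $f$; the quantifier order ($d$ first, then for all $f$) is the whole point. You flag this as ``the crux'' but offer no mechanism to resolve it. Third, the descent step --- that restricting $\varphi$ to one cloud and pigeonholing yields a homomorphism of $G_{T-1}$ into a $K_t$-free graph smaller by a constant factor --- is asserted without any mechanism; a homomorphism of a blowup-product does not in general induce a homomorphism of the base graph into a smaller target.

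The paper sidesteps all three issues by fixing the ``depth'' of the product at $k-1\approx t/2$ (depending only on $t$) and letting only the alphabet size $m$ grow with $f$. Concretely, for $t=2k$ it takes a $2k$-partite graph with parts indexed by $[m]^{k-1}$, where adjacency between parts $\alpha$ and $\beta$ is governed by the largest $p$ with $x_i=y_i$ for all $i\le p$. Triangle-freeness follows from a case analysis on the block structure; the VC-dimension is at most $4k^2$ because the connection type of any vertex to a shattered set $S$ is determined by $\alpha$ and the coordinatewise agreement pattern, giving $2^{|S|}\le 2k(|S|+1)^{k-1}$ --- uniformly in $m$ and hence in $f$. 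The $K_t$-forcing does not go through a shrinking target: one calls a vertex \emph{bad} if at some coordinate level no other vertex of its color differs there, counts that fewer than $m^{k-1}$ vertices are bad when $m>2k(k-1)f$, finds a single index $\vec{x}$ at which all $2k$ parts have good vertices, and exhibits an edge of $G$ between every pair of the corresponding color classes, so those classes span a $K_{2k}$ in $H$. If you want to rescue your outline, the essential repair is to make the number of product iterations a function of $t$ alone and to replace the descent by a direct good/bad-vertex argument of this kind.
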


To see that~\cref{lemma: lower bound for VC hom where s=3} implies the general case $t\ge s\ge 3$, set $t' = t - s + 3$. Let $G'$ be an $n'$-vertex $K_3$-free graph obtained from~\cref{lemma: lower bound for VC hom where s=3} with $t_{\ref{lemma: lower bound for VC hom where s=3}}=t'$ such that it has VC dimension at most $d'=d'(t,\varepsilon)$ and minimum degree at least $\left( \frac{1}{t'} - \varepsilon \right) n'$, and any homomorphism from $G'$ to a graph $H$ with $|H| \leq f$ implies that $H$ contains a $K_{t'}$. Take a complete $(s-2)$-partite graph with parts $V_1, V_2, \ldots, V_{s-2}$, where $|V_1|=n'$ and $|V_2|= \cdots= |V_{s-2}|=\frac{t'- 1}{t'}\cdot n'=\frac{t-s+2}{t-s+3}\cdot n'$. Embed $G'$ into $V_1$. We claim that the resulting graph $G$ has all the desired properties. First of all, it is easy to see that $G$ is $K_s$-free and has VC dimension at most $d'+s-2$. Next, let $\varphi$ be an arbitrary homomorphism from $G$ to a graph $H$ with $|H| \leq f$. We shall see that $H$ contains a copy of $K_t$. Note that $\varphi$ must map $G'$ to a subgraph $H'$ of $H$ containing $K_{t'}$. Pick one vertex from each of the $s - 3$ parts $V_2, \dots, V_{s-2}$. The images of these $s - 3$ vertices in $H$ together with a copy of $K_{t'}$ in $H'$ form a copy of $K_t$ in $H$. It is left to check that $G$ has the desired minimum degree. The number of vertices of $G$, say $n$, satisfies 
$
n = n' + (s - 3) \cdot \frac{t' - 1}{t'}\cdot n' = \frac{(s - 2)(t' - 1)+1}{t'}\cdot n'.
$
Thus, the minimum degree of the graph \( G \) satisfies
\begin{align*}
    \delta(G) &\geq \min\left\{n - \frac{t' - 1}{t'} n', \, n -n' + \left(\frac{1}{t'} - \varepsilon\right)n'\right\}\\
     &\geq \left(\frac{(s - 3)(t' - 1) + 1}{(s - 2)(t' - 1) + 1} - \varepsilon\right)n 
= \left(\frac{(s - 3)(t - s + 2) + 1}{(s - 2)(t - s + 2) + 1} - \varepsilon\right)n.
\end{align*}

We now prove~\cref{lemma: lower bound for VC hom where s=3} by considering the cases of even \( t \) and odd \( t \) separately. Recall that $\delta_{\textup{hom}}^{\textup{VC}}(K_3;K_3)=\delta_{\textup{hom}}(K_3)=\frac{1}{3}$, thus we assume $t\ge 4$. As the constructions are involved, we begin with the first case \( t = 4 \) as a warm up.
\subsection{Proof of~\cref{lemma: lower bound for VC hom where s=3} for $t=4$}
Let \( d(4, \varepsilon) = 12 \). Choose \( m \) to be a sufficiently large integer such that \( m \geq \frac{1}{\varepsilon} \) and \( m > 4f \). 

\begin{definition}
Let \( G = G(4, \varepsilon) \) be a 4-partite graph with \( 4m \) vertices. The vertex set is defined as  
\[
V(G) = \bigcup_{\alpha=1}^4 V^{(\alpha)}, \quad \text{where ~} V^{(\alpha)} = \{v^{(\alpha)}_i : i \in [m]\} \text{ ~for } \alpha \in [4].
\]  
The edge set of \( G \) is defined as follows:  
\begin{itemize}
    \item \( v^{(1)}_i v^{(2)}_j \in E(G) \) if and only if \( i \neq j \).  
    \item \( v^{(3)}_i v^{(4)}_j \in E(G) \) if and only if \( i \neq j \).  
    \item For \( \alpha \in \{1, 2\} \) and \( \beta \in \{3, 4\} \), \( v^{(\alpha)}_i v^{(\beta)}_j \in E(G) \) if and only if \( i = j \).  
\end{itemize}
\end{definition}

It is straightforward to verify that \( G \) is \( K_3 \)-free and has a minimum degree of \( \delta(G) = \frac{|G|}{4} + 1 \). We will now demonstrate the following properties.
\begin{claim}
The following properties of $G$ hold.
\begin{enumerate}
        \item[$\textup{(1)}$] $\textup{VC}(G)\le 12$.
        \item[$\textup{(2)}$] If there exists a graph $H$ such that $G\xrightarrow{\textup{hom}}H$ with $|H|\leq f$, then $H$ contains a copy of $K_{4}$.
\end{enumerate}    
\end{claim}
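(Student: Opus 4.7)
The plan is to establish the two properties of $G$ separately. For (1), I will analyze any candidate shattered set $S\subseteq V(G)$ by decomposing it as $S=S_1\cup S_2\cup S_3\cup S_4$ with $S_\alpha=S\cap V^{(\alpha)}$; call $\{1,2\}$ and $\{3,4\}$ the two \emph{sides} of $G$. The edge rules of $G$ force the trace $N(v)\cap S_\alpha$ to take one of only a few rigid shapes depending on where $v$ sits: it is $\emptyset$ when $v\in V^{(\alpha)}$; it equals $S_\alpha\setminus\{v_i^{(\alpha)}\}$ when $v=v_i^{(\beta)}$ lies in the other part of the same side as $\alpha$; and it equals $\{v_i^{(\alpha)}\}\cap S_\alpha$ when $v=v_i^{(\beta)}$ lies on the opposite side. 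Consequently on $S_\alpha$ there are at most $2|S_\alpha|+2$ distinct traces altogether ($\emptyset$, $S_\alpha$ itself, $|S_\alpha|$ singletons, and $|S_\alpha|$ co-singletons). Since a shattered $S$ must shatter each $S_\alpha$, we need $2^{|S_\alpha|}\le 2|S_\alpha|+2$, which already fails at $|S_\alpha|=4$; hence $|S_\alpha|\le 3$ for every $\alpha$, and summing gives $|S|\le 12$.

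For (2), let $\varphi:V(G)\to V(H)$ with $|H|\le f$, and write $I_\alpha(h)=\{i:\varphi(v_i^{(\alpha)})=h\}$. I will apply four rounds of pigeonhole on the column index set $[m]$: first pick $h_1\in V(H)$ with $|I_1(h_1)|\ge m/f$; then inside $I_1(h_1)$ pigeonhole the values $\varphi(v_i^{(3)})$ to obtain $h_3$ with $|I_1(h_1)\cap I_3(h_3)|\ge m/f^2$; iterate once more to extract $h_4$, and once more to extract $h_2$. The outcome is a set $J\subseteq[m]$ of size at least $m/f^4$ on which all four coordinate images are pinned down, i.e.~$\varphi(v_i^{(\alpha)})=h_\alpha$ for every $i\in J$ and $\alpha\in[4]$. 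For $m$ sufficiently large we have $|J|\ge 2$, and I pick two distinct indices $i,j\in J$.

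Now the four within-column edges $v_i^{(1)}v_i^{(3)},v_i^{(1)}v_i^{(4)},v_i^{(2)}v_i^{(3)},v_i^{(2)}v_i^{(4)}$ transfer under $\varphi$ to $h_1h_3,h_1h_4,h_2h_3,h_2h_4\in E(H)$, while the cross-column edges $v_i^{(1)}v_j^{(2)}$ and $v_i^{(3)}v_j^{(4)}$ deliver the remaining two edges $h_1h_2$ and $h_3h_4$ in $E(H)$. Since $H$ has no loops, the two endpoints of each of these six edges must be distinct, so $h_1,h_2,h_3,h_4$ are pairwise distinct and span a copy of $K_4$ in $H$. The quantitative point to be aware of is that the iterated pigeonhole costs four factors of $f$, so one really needs $m\gtrsim 2f^4$; the stated "$m>4f$" should be read as part of the "sufficiently large $m$" hypothesis of the setup. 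Aside from this bookkeeping, every step reduces either to an elementary pigeonhole count or to direct edge-chasing through $\varphi$, so the only genuine combinatorial content lies in the clean enumeration of trace shapes used for (1).
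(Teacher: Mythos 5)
Your proof of part (1) is essentially the same as the paper's: both decompose the putative shattered set $S$ into the pieces $S_\alpha = S\cap V^{(\alpha)}$ and use the rigid edge rules to restrict the possible traces on each $S_\alpha$. The paper states one consequence of that rigidity (no vertex can meet exactly $2$ out of $4$ vertices in a single part), while you list the full taxonomy of traces ($\emptyset$, full, singleton, co-singleton) and count, but the conclusion $|S_\alpha|\le 3$ and the bound $\textup{VC}(G)\le 12$ are identical.

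Your proof of part (2) is correct but genuinely different from the paper's. The paper defines a column index $i\in[m]$ to be usable when all four $v_i^{(\alpha)}$ are \emph{good}, meaning each has at least one same-colored companion inside its own part $V^{(\alpha)}$. Since the number of bad vertices is at most $4f$, only $m>4f$ is needed to find such a column; the within-column edges give four of the six required edges of $K_4$, and the same-colored companions supply the two ``same side'' edges $c_1c_2$ and $c_3c_4$. Your argument instead runs four rounds of pigeonhole across the index set $[m]$ to produce two indices $i\ne j$ whose entire columns have identical color vectors $(h_1,h_2,h_3,h_4)$, and then reads off all six edges directly from within-column and cross-column edges. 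Both are valid; the trade-off is quantitative. Your iterated pigeonhole requires $m\gtrsim 2f^4$ rather than $m>4f$, and you correctly flag this. Because the lemma is an existence statement in which $m$ may be taken as large as needed in terms of $f$, the worse dependence does not invalidate your proof, but the paper's single round of ``bad vertex'' counting is cleaner and more economical, and it is the argument that generalizes in the subsequent subsections to $t=2k$ and $t=2k+1$, where the iterated-pigeonhole approach would become more cumbersome.
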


\begin{poc}
For (1), to upper bound the VC-dimension, let \( S \) be the largest shattered set in \( G \). For any \( \alpha \in [4] \), suppose \( S \) contains four vertices \( \{v_{i}^{(\alpha)}, v_{j}^{(\alpha)}, v_{k}^{(\alpha)}, v_{\ell}^{(\alpha)}\} \). In this case, no vertex \( v \in V(G) \) can satisfy \( |N(v) \cap \{v_{i}^{(\alpha)}, v_{j}^{(\alpha)}, v_{k}^{(\alpha)}, v_{\ell}^{(\alpha)}\}| = 2 \). This contradicts the definition of \( S \) being a shattered set. Therefore, we must have \( |S \cap V^{(\alpha)}| \leq 3 \) for all \( \alpha \in [4] \). Summing over all parts, we obtain \( \textup{VC}(G) = |S| \leq 12 \).

For (2), if there exists a graph \( H \) such that there is a homomorphism \( \varphi: G \xrightarrow{\textup{hom}} H \) with \( |H| \leq f \), we can color \( V(G) \) with at most \( f \) colors as follows. For each \( v \in V(H) \), we assign the color \( v \) to the vertex set \( \varphi^{-1}(v) \subseteq V(G) \). For any \( \alpha \in [4] \), we call a vertex \( v \in V^{(\alpha)} \) \emph{bad} if it is the only vertex in \( V^{(\alpha)} \) with its assigned color; otherwise, \( v \) is \emph{good}. Clearly, the total number of bad vertices is at most \( 4f \). Since \( m > 4f \), we can find an index \( i \in [m] \) such that \( \{v_{i}^{(1)}, v_{i}^{(2)}, v_{i}^{(3)}, v_{i}^{(4)}\} \) are all good vertices. Assume that for \( \alpha \in [4] \), the color of \( v_{i}^{(\alpha)} \) is \( c_\alpha \).  

Next, observe that for \( \alpha \in \{1, 2\} \) and \( \beta \in \{3, 4\} \), \( v_{i}^{(\alpha)} v_{i}^{(\beta)} \in E(G) \), so there is an edge between \( c_\alpha \) and \( c_\beta \) in \( H \). Since \( v_{i}^{(1)} \) is not a bad vertex, there exists \( v_{j}^{(1)} \) with color \( c_1 \) where \( i \neq j \). Consequently, \( v_{j}^{(1)} v_{i}^{(2)} \) forms an edge in \( E(G) \) between colors \( c_1 \) and \( c_2 \). Similarly, since \( v_{i}^{(3)} \) is not a bad vertex, there exists \( v_{k}^{(3)} \) with color \( c_3 \) where \( i \neq k \). Thus, \( v_{k}^{(3)} v_{i}^{(4)} \) forms an edge in \( E(G) \) between colors \( c_3 \) and \( c_4 \).  As a result, we have identified four color classes \( c_1, c_2, c_3, c_4 \in V(H) \) such that there is an edge in \( E(G) \) between any two of them. Since $\varphi$ is a homomorphism, \( c_1, c_2, c_3, c_4 \in V(H) \) form a copy of \( K_4 \) in \( H \).
\end{poc}

\subsection{Proof of~\cref{lemma: lower bound for VC hom where s=3} for $t=2k$}

Set \( d(2k, \varepsilon) = 4k^2 \). Let \( m \) be a sufficiently large integer such that \( \frac{m-1}{2km} \geq \frac{1}{2k} - \varepsilon \) and \( m > 2k(k-1)f \). The previous construction for \( t = 4 \) is a special case of the more general construction below.
\begin{definition}\label{def:t=2k}
Let $G=G(2k,\varepsilon)$ be a $2k$-partite graph with $2km^{k-1}$ vertices. The vertex set is defined as
\begin{equation*}
    V(G)=\bigcup_{\alpha=1}^{2k} V^{(\alpha)}, \text{~where\ ~}V^{(\alpha)}=\{v^{(\alpha)}_{\Vec{x}}:\Vec{x}=(x_1,x_2,\dots,x_{k-1})\in [m]^{k-1}\}\ \text{~for\ }\alpha\in [2k].
\end{equation*}
The edge set of \( G \) is defined as follows:
\begin{enumerate}
    \item For each \( \alpha \in [k] \), \( v^{(2\alpha - 1)}_{\Vec{x}} v^{(2\alpha)}_{\Vec{y}} \in E(G) \) if and only if \( x_1 \neq y_1 \).
    \item For each \( p \in [k-1] \), the following holds: For any \( \alpha \in \{1, 2, \dots, 2p\} \) and \( \beta \in \{2p+1, 2p+2\} \), \( v^{(\alpha)}_{\Vec{x}} v^{(\beta)}_{\Vec{y}} \in E(G) \) if and only if \( p \) is the maximum integer such that \( x_i = y_i \) for all \( i \in [p] \).
\end{enumerate}
\end{definition}

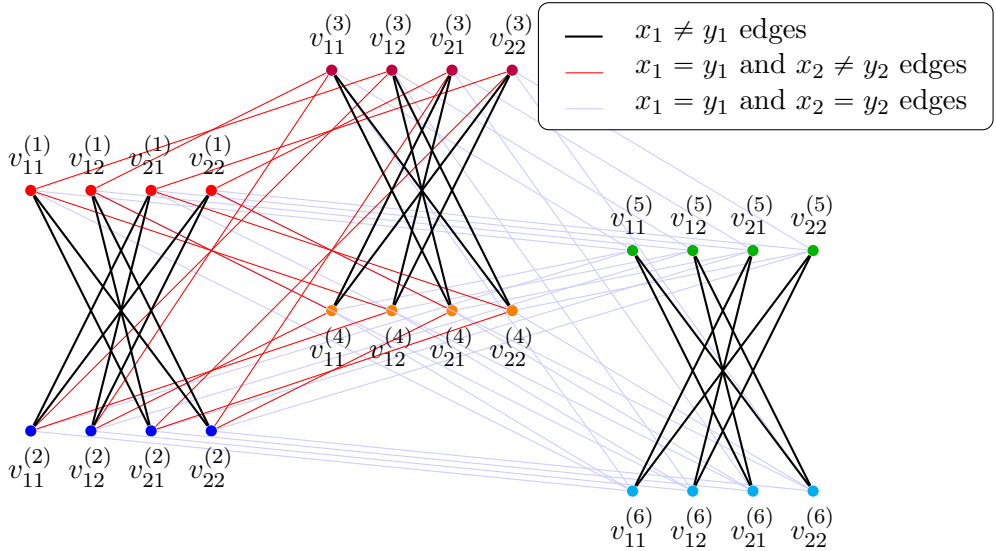
\begin{figure}[htbp]
\centering
\begin{tikzpicture}[
    node distance=8mm and 2mm,
    basic_node/.style={circle, fill, inner sep=1.5pt},
    partition1/.style={basic_node, red},
    partition2/.style={basic_node, blue},
    partition3/.style={basic_node, purple},
    partition4/.style={basic_node, orange},
    partition5/.style={basic_node, green!70!black},
    partition6/.style={basic_node, cyan},
    edge3/.style={blue!20},
    edge2/.style={red},
    edge1/.style={black, thick},
    scale=0.8
]

\foreach \x [count=\i] in {11,12,21,22} {
    \node[partition1] (v1-\x) at (-3 + \i, 1) {};
}

\foreach \x [count=\i] in {11,12,21,22} {
    \node[partition2] (v2-\x) at (-3 + \i, -3) {};
}

\foreach \x [count=\i] in {11,12,21,22} {
    \node[partition3] (v3-\x) at (2 + \i, 3) {};
}

\foreach \x [count=\i] in {11,12,21,22} {
    \node[partition4] (v4-\x) at (2 + \i, -1) {};
}

\foreach \x [count=\i] in {11,12,21,22} {
    \node[partition5] (v5-\x) at (7 + \i, 0) {}; 
}

\foreach \x [count=\i] in {11,12,21,22} {
    \node[partition6] (v6-\x) at (7 + \i, -4) {}; 
}

\foreach \A in {1,2,3,4} {
    \foreach \B in {5,6} {
        \foreach \x in {11,12,21,22} {
            \foreach \y in {11,12,21,22} {
                \ifnum \numexpr (\x/10) \relax = \numexpr (\y/10) \relax
                    \ifnum \numexpr (\x-10*(\x/10)) \relax = \numexpr (\y-10*(\y/10)) \relax
                        \draw[edge3] (v\A-\x) -- (v\B-\y);
                    \fi
                \fi
            }
        }
    }
}

\foreach \A in {1,2} {
    \foreach \B in {3,4} {
        \foreach \x in {11,12,21,22} {
            \foreach \y in {11,12,21,22} {
                \ifnum \numexpr (\x/10) \relax = \numexpr (\y/10) \relax
                    \ifnum \numexpr (\x-10*(\x/10)) \relax = \numexpr (\y-10*(\y/10)) \relax \else
                        \draw[edge2] (v\A-\x) -- (v\B-\y);
                    \fi
                \fi
            }
        }
    }
}

\foreach \A/\B in {1/2,3/4,5/6} {
    \foreach \x in {11,12,21,22} {
        \foreach \y in {11,12,21,22} {
            \ifnum \numexpr (\x/10) \relax = \numexpr (\y/10) \relax \else
                \draw[edge1] (v\A-\x) -- (v\B-\y);
            \fi
        }
    }
}

\foreach \x [count=\i] in {11,12,21,22} {
    \node[above=0mm of v1-\x] {$v^{(1)}_{\x}$};
}

\foreach \x [count=\i] in {11,12,21,22} {
    \node[below=0mm of v2-\x] {$v^{(2)}_{\x}$};
}

\foreach \x [count=\i] in {11,12,21,22} {
    \node[above=0mm of v3-\x] {$v^{(3)}_{\x}$};
}

\foreach \x [count=\i] in {11,12,21,22} {
    \node[below=0mm of v4-\x] {$v^{(4)}_{\x}$};
}

\foreach \x [count=\i] in {11,12,21,22} {
    \node[above=0mm of v5-\x] {$v^{(5)}_{\x}$};
}

\foreach \x [count=\i] in {11,12,21,22} {
    \node[below=0mm of v6-\x] {$v^{(6)}_{\x}$};
}

\node[draw, fill=white, rounded corners, anchor=south east] at (14,2) {
    \begin{tabular}{rl}
        \tikz{\draw[edge1] (0,0) -- (0.5,0);} & $x_1 \neq y_1$ edges \\
        \tikz{\draw[edge2] (0,0) -- (0.5,0);} & $x_1=y_1$ and $x_2 \neq y_2$ edges \\
        \tikz{\draw[edge3] (0,0) -- (0.5,0);} & $x_1=y_1$ and $x_2=y_2$ edges \\
    \end{tabular}
};

\end{tikzpicture}
\caption{Structure of the 6-partite graph $G$ with $t=6$, $m=2$}
\label{fig:6partite_graph}
\end{figure}

We then show some desired properties of the graph $G$ defined in~\cref{def:t=2k}.
\begin{prop}\label{Prop:K3Ktt=2k}
The following properties of $G$ holds:
\begin{enumerate}
        \item[$\textup{(1).}$] The minimum degree satisfies $\delta(G)\geq (m-1)m^{k-2}=\frac{m-1}{2km}\cdot |V(G)|\geq (\frac{1}{2k}-\varepsilon)\cdot |V(G)|$.
         \item[$\textup{(2).}$] $G$ is triangle free. 
        \item[$\textup{(3).}$] $\textup{VC}(G)\leq 4k^2$.
        \item[$\textup{(4).}$] If there exists a graph $H$ such that $G\xrightarrow{\textup{hom}}H$ with $|H|\leq f$, then $H$ contains a copy of $K_{2k}$.
    \end{enumerate}
\end{prop}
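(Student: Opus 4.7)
The plan is to verify each of the four properties in turn by directly exploiting the explicit edge rules defining $G$.

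For (1), I would count the neighborhood of any vertex $v^{(\alpha)}_{\vec{x}}$. The "other part in the same pair" alone contributes $(m-1)m^{k-2}$ neighbors (those $\vec{y}$ with $y_1\ne x_1$), and every cross-pair contribution is nonnegative, so $\delta(G)\ge (m-1)m^{k-2}$; dividing by $|V(G)|=2km^{k-1}$ and using $m\ge 1/\varepsilon$ yields the claimed inequality.

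For (2), I would argue by contradiction. Each pair induces a bipartite subgraph, so no triangle can live in a single pair. Every within-pair edge forces distinct first coordinates, while every cross-pair edge from pair $p$ to pair $q>p$ forces agreement on coordinates $1,\ldots,q-1$ (with $\vec x=\vec y$ when $q=k$). If two triangle vertices lie in a common pair, the two cross-pair edges force them to share the first coordinate, contradicting the within-pair edge; if the three vertices sit in distinct pairs $p_1<p_2<p_3$, then the two edges into pair $p_3$ force the pair-$p_1$ and pair-$p_2$ vertices to agree on coordinate $p_2$, contradicting the cross-pair edge between pairs $p_1$ and $p_2$.

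For (3), the plan is to show $|S\cap V^{(\alpha)}|\le 2k$ for each $\alpha$ and then sum. I would first observe that $N(v)\cap V^{(\alpha)}$ always has one of the following forms: $\emptyset$, a singleton, or a set of the form $\{v^{(\alpha)}_{\vec x}: x_i=u_i \text{ for } i<r,\ x_r\ne u_r\}$ for some $r\in[k-1]$ and $(u_1,\ldots,u_r)$. Partitioning the leaves of $S\cap V^{(\alpha)}$ by their first coordinate, any subset $T$ that meets two first-coordinate classes without being a union of full classes can only be realized as a "complement of one class" trace or as the full set. Hence shattering fails unless either \emph{all leaves share a first coordinate} (which reduces the problem to $k-2$ further coordinates and is handled by induction on $k$, with base case $k=2$) or \emph{all first coordinates are distinct}, in which case only $2s+2$ traces exist and shattering $s$ elements forces $2s+2\ge 2^s$, i.e.\ $s\le 3$. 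Either way $|S\cap V^{(\alpha)}|\le 2k$ (indeed $\le 3$), and summing over the $2k$ parts gives $\textup{VC}(G)\le 4k^2$.

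For (4), given a homomorphism $\varphi: G\to H$ with $|H|\le f$, I would locate a \emph{perfect} $\vec{x}^*\in[m]^{k-1}$: one such that for every $\alpha\in[2k]$ and every $q\in[k-1]$, some $\vec y$ agreeing with $\vec x^*$ off coordinate $q$ satisfies $\varphi(v^{(\alpha)}_{\vec y})=\varphi(v^{(\alpha)}_{\vec x^*})$. For each pair $(\alpha,q)$ the "bad" base points are those for which $v^{(\alpha)}_{\vec x^*}$ is uniquely colored on its $q$-th coordinate line; each of the $m^{k-2}$ such lines contributes at most $|H|\le f$ bad vertices, for a total of at most $2k(k-1)fm^{k-2}<m^{k-1}$ bad $\vec x^*$ by the hypothesis $m>2k(k-1)f$, so some $\vec{x}^*$ is perfect. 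Setting $c_\alpha=\varphi(v^{(\alpha)}_{\vec x^*})$, I would then assemble the required $K_{2k}$ in $H$ edge by edge: cross-pair edges involving pair $k$ come directly from the vertices $v^{(\alpha)}_{\vec x^*}, v^{(\beta)}_{\vec x^*}$ (already adjacent in $G$); cross-pair edges between pairs $p<q<k$ fix the pair-$p$ endpoint at $\vec x^*$ and use perfectness in direction $q$ to shift the pair-$q$ endpoint to a same-color vertex whose $q$-th coordinate differs; intra-pair edges use perfectness in direction $1$. Distinctness of $c_1,\ldots,c_{2k}$ then follows because $\varphi$ sends edges of $G$ to edges, not self-loops, of the simple graph $H$, so $\{c_1,\ldots,c_{2k}\}$ spans a $K_{2k}$. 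The main technical obstacle is step (4): the perfect-vertex device is precisely what bundles all the required edge conditions into a single choice of base point.
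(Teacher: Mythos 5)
Your proof is essentially correct, and parts (1) and (2) follow the paper's argument. For (4) you use a slightly more restrictive notion of a ``bad'' vertex than the paper: you require a same-colored companion of $v^{(\alpha)}_{\vec x^*}$ on the one-dimensional coordinate line $\{\vec y:y_i=x_i^*\ \forall i\neq q\}$, whereas the paper only requires one in the larger set $\{\vec y:y_i=x_i\ \forall i<q,\ y_q\neq x_q\}$. Both yield the same count $2k(k-1)fm^{k-2}<m^{k-1}$, and your stronger perfectness property is exactly what the edge construction consumes, so this is a legitimate variant. The real divergence is part (3). The paper's proof is a one-line global count: adjacency of $v^{(\alpha)}_{\vec x}$ to any $v^{(\beta)}_{\vec y}$ depends only on $\alpha$ and on which coordinates satisfy $x_i=y_i$, so the number of distinct neighborhood-traces on a shattered $S$ is at most $2k(|S|+1)^{k-1}$, forcing $2^{|S|}\le 2k(|S|+1)^{k-1}$. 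You instead bound $|S\cap V^{(\alpha)}|\le 3$ for each $\alpha$ by classifying $N(v)\cap V^{(\alpha)}$ as $\emptyset$, a singleton, or a ``prefix slab'' $\{v^{(\alpha)}_{\vec x}:x_i=u_i\ \forall i<r,\ x_r\neq u_r\}$, noting that every trace meeting two first-coordinate classes must be a union of full classes, and then splitting into ``all first coordinates equal'' (reduce one coordinate and induct on $k$) versus ``all first coordinates distinct'' (only $2s+2$ traces, so $s\le 3$). This is, in effect, the paper's own warm-up argument for $t=4$ promoted to an induction, and it gives the sharper bound $\VC(G)\le 6k\le 4k^2$. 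Two places deserve an explicit sentence: the observation that a class of size $\ge 2$ together with any other non-empty class produces a target set (one vertex from the big class plus the whole other class) that no trace can match; and the verification that after fixing the common first coordinate, the induced trace family on the remaining $k-2$ coordinates has exactly the $(k-1)$-version of the same structure, so the inductive hypothesis applies. With those spelled out the argument is complete.
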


\begin{proof}
For (1), note that for a vertex \( v = v^{(2\alpha - 1)}_{\Vec{x}} \in V(G) \), $\alpha\in[k]$, there are at least \( (m-1)m^{k-2} \) vertices of the form \( v^{(2\alpha)}_{\Vec{y}} \) where \( x_1 \neq y_1 \), so \( v \) has at least \( (m-1)m^{k-2} \) neighbors. The same bound holds for vertices in $V^{(2\alpha)}$. Thus, the minimum degree is at least \( (m-1)m^{k-2} \).
     
     For (2), suppose that \( v^{(\alpha)}_{\Vec{x}}v^{(\beta)}_{\Vec{y}}v^{(\gamma)}_{\Vec{z}} \) form a triangle in \( G \). Then, \( \alpha, \beta, \gamma \) are pairwise distinct. If there exists a \( p_{0} \) such that \( \alpha = 2p_{0} - 1 \) and \( \beta = 2p_{0} \). In this case, \( \gamma \notin \{2p_{0} - 1, 2p_{0}\} \), so we have \( x_1 \neq y_1 = z_1 = x_1 \), which is a contradiction.

Thus, without loss of generality, we may assume that no two of \( \alpha \), \( \beta \), and \( \gamma \) belong to the same block, where the blocks are defined as consecutive pairs of integers \( \{2p - 1, 2p\} \) for \( p \in [k] \). Let \( p_{xy} \) be the maximum integer \( p \) such that \( x_i = y_i \) for all \( i \in [p] \), and define \( p_{yz} \) and \( p_{xz} \) similarly. Without loss of generality, assume \( p_{xy} \leq p_{yz} \leq p_{xz} \). Therefore, for \( i \in [p_{yz}] \subseteq [p_{xz}] \), we have \( y_i = z_i = x_i \), implying that \( p_{xy} \geq p_{yz} \). Hence, \( p_{xy} = p_{yz} \).

Let \( p = p_{xy} = p_{yz} \). If \( \beta \in \{1, 2, \dots, 2p\} \), then \( \alpha,\gamma \in \{2p + 1, 2p + 2\} \), which is a contradiction, since no two of \( \alpha \), \( \beta \), and \( \gamma \) belong to the same block. Therefore, \( \beta \in \{2p + 1, 2p + 2\} \) and \( \alpha,\gamma \in \{1, 2, \dots, 2p\} \). By definition, one of \( \alpha \) and \( \gamma \) must be in \( \{2p_{xz} + 1, 2p_{xz} + 2\} \). Thus, we have \( p_{xz} < p = p_{xy} \), which results in a contradiction. This completes the proof.
    
    For (3), by definition, the event that \( v^{(\alpha)}_{\Vec{x}} v^{(\beta)}_{\Vec{y}} \) is an edge in \( G \) is entirely determined by the values of \( \alpha \), \( \beta \), and whether the conditions \( x_1 = y_1 \), \( x_2 = y_2 \), \(\dots\), and \( x_{k-1} = y_{k-1} \) hold. To upper bound the VC-dimension, let \( S \) denote the largest shattered set in \( G \). 

For any vertex \( v^{(\alpha)}_{\Vec{x}} \), the number of possible connection types to the set \( S \) is at most \( 2k(|S| + 1)^{k-1} \). This accounts for \( 2k \) choices for \( \alpha \) and \( |S| + 1 \) choices for each coordinate of \( \Vec{x} \). Since \( S \) is a shattered set, we have the inequality \( 2^{|S|} \leq 2k(|S| + 1)^{k-1} \). Given that \( k \geq 2 \), we can conclude that the VC-dimension of \( G \) satisfies \( \VC(G) = |S| \leq 4k^2 \).
    
    For (4), if there exists a graph \( H \) such that \( G \xrightarrow{\textup{hom}} H \) with \( |H| \leq f \), we color \( V(G) \) using at most \( f \) colors by the images as before. For any \( \alpha \in [2k] \), we say a vertex \( v^{(\alpha)}_{\Vec{x}} \) is \emph{bad} if it receives some color \( c \) and there exists an integer \( p \in [k-1] \) such that there is no vertex of color \( c \) in the set
\[ \{ v^{(\alpha)}_{\Vec{y}} : y_p \neq x_p \text{ and for all } i \in [p-1], y_i = x_i \}. \] Otherwise, we say \( v^{(\alpha)}_{\Vec{x}} \) is \emph{good}. Similar to the case of \( t = 4 \), we need to count the number of bad vertices. For a fixed color \( c \), \( \alpha \), and \( p \), for any \( \Vec{x} \), there is at most one vertex in \[ \{ v^{(\alpha)}_{\Vec{y}} : \text{ for all } i \in [k-1] \setminus \{p\}, y_i = x_i \} \] that is a bad vertex with respect to \( c \), \( \alpha \), and \( p \). Thus, the total number of bad vertices in \( G \) is at most
\[
\sum_{c \in [f]} \sum_{\alpha \in [2k]} \sum_{p \in [k-1]} m^{p-1} \cdot 1 \cdot m^{k-1-p} = 2k(k-1)fm^{k-2} < m^{k-1}.
\]
Therefore, we can find an \( \Vec{x} \in [m]^{k-1} \) such that \( \{ v_{\Vec{x}}^{(1)}, v_{\Vec{x}}^{(2)}, \dots, v_{\Vec{x}}^{(2k)} \} \) are all good vertices. Fix this \( \Vec{x} \), and assume that for any \( \alpha \in [2k] \), the color of \( v_{\Vec{x}}^{(\alpha)} \) is \( c_\alpha \). It suffices to show that for any \( 1 \leq \alpha < \beta \leq 2k \), there is an edge between the vertices of color \( c_\alpha \) and \( c_\beta \). We consider the following three cases, which cover all possible scenarios.
    \begin{itemize}
        \item For \( p \in [k] \) and \( \alpha = 2p - 1 \), \( \beta = 2p \), since \( v_{\Vec{x}}^{(2p)} \) is a good vertex, there exists a vertex \( v^{(2p)}_{\Vec{y}'} \) in \( \{ v^{(2p)}_{\Vec{y}} : y_1 \neq x_1 \} \) with color \( c_\beta \). Then, \( v_{\Vec{x}}^{(2p-1)} v^{(2p)}_{\Vec{y}'} \) is an edge between the vertices of colors \( c_\alpha \) and \( c_\beta \).
        \item 
For \( p \in [k-2] \), \( \alpha \in \{ 1, 2, \dots, 2p \} \) and \( \beta \in \{ 2p+1, 2p+2 \} \), since \( v_{\Vec{x}}^{(\beta)} \) is a good vertex, there exists a vertex \( v^{(\beta)}_{\Vec{y}'} \) in \( \{ v^{(\beta)}_{\Vec{y}} : y_{p+1} \neq x_{p+1} \text{ and for } i \in [p], y_i = x_i \} \) with color \( c_\beta \). Then, \( v_{\Vec{x}}^{(\alpha)} v^{(\beta)}_{\Vec{y}'} \) is an edge between the vertices of colors \( c_\alpha \) and \( c_\beta \).
        \item 
For \( p = k-1 \), where \( \alpha \in \{ 1, 2, \dots, 2p \} \) and \( \beta \in \{ 2p+1, 2p+2 \} \), \( v_{\Vec{x}}^{(\alpha)} v^{(\beta)}_{\Vec{x}} \) is an edge between the vertices of colors \( c_\alpha \) and \( c_\beta \).
    \end{itemize}
Thus, \( c_1, c_2, \dots, c_{2k} \) form a copy of \( K_{2k} \) in \( H \). This finishes the proof.
\end{proof}

\subsection{Proof of~\cref{lemma: lower bound for VC hom where s=3} for $t=2k+1$}
Define \( d(2k+1, \varepsilon) = 4k^2 \). Let \( m \) be a sufficiently large number such that \( \frac{m-1}{(2k+1)m} \geq \frac{1}{2k+1} - \varepsilon \) and \( m > (2k+1)(k-1)f + 3f \). The odd case is similar to the even case \( t = 2k \), but with a slight modification: the last two parts, \( 2k-1 \) and \( 2k \), are adjusted to three parts, \( 2k-1 \), \( 2k \), and \( 2k+1 \).
\begin{definition}
Let $G=G(2k+1,\varepsilon)$ be a $(2k+1)$-partite graph with $(2k+1)m^{k-1}$ vertices, where 
\begin{equation*}
    V(G)=\bigcup_{\alpha=1}^{2k+1} V^{(\alpha)},\ \text{where\ }V^{(\alpha)}=\{v^{(\alpha)}_{\Vec{x}}:\Vec{x}=(x_1,x_2,\dots,x_{k-1})\in [m]^{k-1}\}\ \text{for\ }\alpha\in [2k+1].
\end{equation*}
The edge set of $G$ is defined as follows.
\begin{enumerate}
    \item For each $\alpha\in [k-1]$, $v^{(2\alpha -1)}_{\Vec{x}}v^{(2\alpha )}_{\Vec{y}}\in E(G)$ if and only if $x_1\neq y_1$.
    \item  
    For each $p\in [k-2]$ the following holds: For any $\alpha\in \{1,2,\dots,2p\}$, $\beta\in \{2p+1,2p+2\}$, $v^{(\alpha)}_{\Vec{x}}v^{(\beta)}_{\Vec{y}}\in E(G)$ if and only if $p$ is the largest integer such that $x_i=y_i$ holds for all $i\in [p]$.

    \item For any $\alpha\in \{1,2,\dots,2k-2\}$, $\beta\in \{2k-1,2k,2k+1\}$, $v^{(\alpha)}_{\Vec{x}}v^{(\beta)}_{\Vec{y}}\in E(G)$ if and only if $x_i=y_i$ for all $i\in [k-1]$.

    \item For $(\alpha,\beta)=(2k-1,2k)$, $(2k,2k+1)$ or $(2k+1,2k-1)$, $v^{(\alpha)}_{\Vec{x}}v^{(\beta)}_{\Vec{y}}\in E(G)$ if and only if $x_1<y_1$.
\end{enumerate}
\end{definition}

\begin{figure}[htbp]
\centering
\begin{tikzpicture}[
    node distance=8mm and 2mm,
    basic_node/.style={circle, fill, inner sep=1.5pt},
    partition1/.style={basic_node, red},
    partition2/.style={basic_node, blue},
    partition3/.style={basic_node, green!70!black},
    partition4/.style={basic_node, orange},
    partition5/.style={basic_node, purple},
    rule1/.style={black, thick},
    rule3/.style={blue!45},
    rule4/.style={red, thick},
    scale=0.8
]

\foreach \i in {1,...,4} {
    \node[partition1] (v1-\i) at (-5 + \i, 3) {}; 
    \node[above=1mm of v1-\i] {$v^{(1)}_{\i}$};
}

\foreach \i in {1,...,4} {
    \node[partition2] (v2-\i) at (-5 + \i, -1) {}; 
    \node[below=1mm of v2-\i] {$v^{(2)}_{\i}$};
}

\foreach \i/\angle in {1/0,2/30,3/60,4/90} {
    \node[partition3] (v3-\i) at ({5+3*cos(\angle)},{3*sin(\angle)}) {};
    \node[above=0mm of v3-\i] {$v^{(3)}_{\i}$};
}

\foreach \i/\angle in {1/120,2/150,3/180,4/210} {
    \node[partition4] (v4-\i) at ({5+3*cos(\angle)},{3*sin(\angle)}) {};
    \node[above=0mm of v4-\i] {$v^{(4)}_{\i}$};
}

\foreach \i/\angle in {1/240,2/270,3/300,4/330} {
    \node[partition5] (v5-\i) at ({5+3*cos(\angle)},{3*sin(\angle)}) {};
    \node[below=0mm of v5-\i] {$v^{(5)}_{\i}$};
}

\foreach \source in {v1,v2} {  
    \foreach \i in {1,...,4} {
        \foreach \target in {v3,v4,v5} {
            \draw[rule3] (\source-\i) -- (\target-\i);
        }
    }
}

\foreach \i in {1,...,4} {
    \foreach \j in {1,...,4} {
        \ifnum\i=\j\else
            \draw[rule1] (v1-\i) -- (v2-\j);
        \fi
    }
}

\foreach \i in {1,...,3} {
    \pgfmathtruncatemacro{\jstart}{\i + 1} 
    \foreach \j in {\jstart,...,4} {
        \draw[rule4] (v3-\i) -- (v4-\j);
        \draw[rule4] (v4-\i) -- (v5-\j);
        \draw[rule4] (v5-\i) -- (v3-\j);
    }
}

\node[draw, fill=white, rounded corners, anchor=south east] at (1,-4.5) {
    \begin{tabular}{rl}
        \tikz{\draw[rule1] (0,0)--(0.5,0);} & $x_1 \neq y_1$ edges \\
        \tikz{\draw[rule4] (0,0)--(0.5,0);} & $x_1 < y_1$ edges \\
        \tikz{\draw[rule3] (0,0)--(0.5,0);} & $x_i = y_i$ for all $i\in[1]$ edges \\
    \end{tabular}
};

\end{tikzpicture}
\caption{Structure of the 5-partite graph $G$ with $t=5$, $m=4$}
\label{fig:5partite_adjusted}
\end{figure}

\begin{prop}\label{Prop:K3Ktt=2k+1}
The following properties of $G$ holds:
\begin{enumerate}
        \item[$\textup{(1)}$] The minimum degree $\delta(G)\geq (m-1)m^{k-2}=\frac{m-1}{(2k+1)m}\cdot|V(G)|\geq (\frac{1}{2k+1}-\varepsilon)\cdot |V(G)|$.
         \item[$\textup{(2)}$] $G$ is triangle free. 
        \item[$\textup{(3)}$] $\textup{VC}(G)\leq 4k^2$.
        \item[$\textup{(4)}$] If there exists a graph $H$ such that $G\xrightarrow{\textup{hom}}H$ with $|H|\leq f$, then $H$ contains a copy of $K_{2k+1}$.
    \end{enumerate}
\end{prop}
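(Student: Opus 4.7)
The plan is to adapt the four-part argument from~\cref{Prop:K3Ktt=2k} to the odd case, modifying each step to accommodate the cyclic adjacency rule~4 on the last three parts $V^{(2k-1)}, V^{(2k)}, V^{(2k+1)}$. For (1), I count the neighbors of $v^{(\alpha)}_{\Vec{x}}$ part by part. For $\alpha \in [2k-2]$, rule~1 (or rule~2 applied to the appropriate block) already supplies $(m-1)m^{k-2}$ neighbors, exactly as in the even case. For $v^{(2k-1)}_{\Vec{x}}$, rule~4 yields $(m - x_1)m^{k-2}$ neighbors in $V^{(2k)}$ (those $\Vec{y}$ with $y_1 > x_1$) and $(x_1 - 1)m^{k-2}$ neighbors in $V^{(2k+1)}$ (those $\Vec{y}$ with $y_1 < x_1$), summing to exactly $(m-1)m^{k-2}$; the cases $\alpha = 2k$ and $\alpha = 2k+1$ are cyclically symmetric.

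For (2), I establish triangle-freeness by splitting on how many of the three triangle vertices lie in $\{V^{(2k-1)}, V^{(2k)}, V^{(2k+1)}\}$. When all three lie outside, the restriction of $G$ to $V^{(1)} \cup \cdots \cup V^{(2k-2)}$ coincides with the restriction of the even-case graph of~\cref{def:t=2k} to the same parts, so triangle-freeness follows from~\cref{Prop:K3Ktt=2k}(2). When exactly one or exactly two lie among the last three parts, rule~3 forces vertices in $V^{(1)} \cup \cdots \cup V^{(2k-2)}$ and their neighbors in the last three parts to share the same index vector; this contradicts either rule~1 (requiring first coordinates to differ) or rule~4 (requiring a strict order on first coordinates). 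When all three lie in the last three parts, rule~4 produces the cyclic chain $x_1 < y_1 < z_1 < x_1$, a contradiction.

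For (3), for a shattered set $S$, the trace $N(v) \cap S$ of a vertex $v = v^{(\alpha)}_{\Vec{x}}$ is determined by $\alpha$ together with the relations each coordinate $x_i$ bears to the corresponding coordinates of vertices in $S$. Coordinates $x_2, \ldots, x_{k-1}$ enter adjacency only through equality, giving at most $|S|+1$ patterns apiece, while $x_1$ enters through both equality (rules~1--3) and strict order (rule~4), giving at most $2|S|+1$ patterns. Combined with $2k+1$ choices for $\alpha$, this gives $2^{|S|} \leq (2k+1)(2|S|+1)(|S|+1)^{k-2}$, from which $|S| \leq 4k^2$ follows comfortably.

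For (4), I mirror the strategy of the even case with a refined notion of bad vertex for the cyclic parts. Retain the original bad definition for all $\alpha \in [2k+1]$ and $p \in [k-1]$; additionally, for $\alpha \in \{2k-1, 2k, 2k+1\}$, call $v^{(\alpha)}_{\Vec{x}}$ bad if $x_1$ is the maximum first coordinate among color-$\varphi(v)$ vertices in $V^{(\alpha)}$. The first kind contributes at most $(2k+1)(k-1)fm^{k-2}$ bad vertices and the second at most $3fm^{k-2}$, so the hypothesis $m > (2k+1)(k-1)f + 3f$ produces a good index $\Vec{x}$. Setting $c_\alpha := \varphi(v^{(\alpha)}_{\Vec{x}})$, edges $c_\alpha c_\beta$ for $\alpha, \beta \in [2k-2]$ follow exactly as in the even-case proof, and edges for $\alpha \in [2k-2], \beta \in \{2k-1, 2k, 2k+1\}$ come directly from rule~3 applied to the good pair $v^{(\alpha)}_{\Vec{x}}, v^{(\beta)}_{\Vec{x}}$. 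The cyclic edges $c_{2k-1}c_{2k}, c_{2k}c_{2k+1}, c_{2k+1}c_{2k-1}$ are the new ingredient: the not-max condition on $v^{(2k)}_{\Vec{x}}$ supplies some $v^{(2k)}_{\Vec{z}}$ of color $c_{2k}$ with $z_1 > x_1$, whence $v^{(2k-1)}_{\Vec{x}} v^{(2k)}_{\Vec{z}} \in E(G)$ by rule~4, and the remaining two cyclic edges follow analogously thanks to the cyclic orientation of rule~4. The main subtlety is choosing a one-sided ``not max'' condition rather than a two-sided one: this keeps the additional bad count to $3fm^{k-2}$ (matching the stated threshold on $m$) while still certifying all three cyclic edges from a single good index, because rule~4 is cyclic.
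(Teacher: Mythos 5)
Your proposal is correct and follows essentially the same route as the paper's proof; in particular, your one-sided ``$x_1$ is maximal'' condition in part (4) is exactly the paper's notion of a \emph{cyclic-bad} vertex, and your count of $3fm^{k-2}$ and the three cyclic edges via rule~4 match the paper's argument. Two refinements are worth noting. For (2), your direct case split by how many of the three triangle vertices lie in $V^{(2k-1)}\cup V^{(2k)}\cup V^{(2k+1)}$ is a valid alternative to the paper's reduction, which treats $\{2k-1,2k,2k+1\}$ as a single block and only needs to rule out two of $\alpha,\beta,\gamma$ sharing it; both boil down to the same cyclic contradiction $x_1<y_1<z_1<x_1$. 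For (3), you correctly observe that rule~4 makes adjacency to the last three parts depend on the \emph{order} of the first coordinate, so a vertex's trace on a shattered set $S$ is determined by its position among at most $2|S|+1$ intervals in coordinate~1 (not $|S|+1$ equality classes), yielding $2^{|S|}\le (2k+1)(2|S|+1)(|S|+1)^{k-2}$. The paper invokes ``the same argument'' as the even case and writes $2^{|S|}\le(2k+1)(|S|+1)^{k-1}$, which as stated does not account for the order relation; your bound is the careful one, and as you note the conclusion $|S|\le 4k^2$ follows comfortably from either.
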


\begin{proof}
The proof follows a similar approach to the proof of~\cref{Prop:K3Ktt=2k}, we streamline the key differences.

For (1), it suffices to verify that for any \(v \in V^{(2k-1)} \cup V^{(2k)} \cup V^{(2k+1)}\), \(v\) has at least \((m-1)m^{k-2}\) neighbors. Without loss of generality, assume \(v = v^{(2k-1)}_{\Vec{x}} \in V^{(2k-1)}\). The neighborhood of \(v\) includes the set \(\{v^{(2k)}_{\Vec{y}} : x_1 < y_1\} \cup \{v^{(2k+1)}_{\Vec{y}} : x_1 > y_1\}\), whose size is exactly \((m-1)m^{k-2}\).

For (2), we now define the blocks as \(\{2p-1, 2p\}\) for \(p \in [k-1]\) and the set \(\{2k-1, 2k, 2k+1\}\). The remainder of the proof is identical to that of~\cref{Prop:K3Ktt=2k}~(2), provided that \(\{2k-1, 2k, 2k+1\}\) is treated as a single block. It suffices to verify that no two of \(\alpha\), \(\beta\), and \(\gamma\) belong to the block \(\{2k-1, 2k, 2k+1\}\).  

We proceed by contradiction. Assume that two of \(\alpha\), \(\beta\), and \(\gamma\) belong to the block \(\{2k-1, 2k, 2k+1\}\). Since \(\alpha\), \(\beta\), and \(\gamma\) are pairwise distinct, without loss of generality, let \(\alpha = 2k-1\) and \(\beta = 2k\). Recall that \(v^{(\alpha)}_{\Vec{x}}v^{(\beta)}_{\Vec{y}}v^{(\gamma)}_{\Vec{z}}\) form a triangle in \(G\). If \(\gamma = 2k+1\), then \(x_1 < y_1 < z_1 < x_1\), which is a contradiction. If \(\gamma \in [2k-2]\), then \(x_1 < y_1 = z_1 = x_1\), which also leads to a contradiction.
    
    For (3), by the same argument in the proof of~\cref{Prop:K3Ktt=2k}~(3), $2^{|S|}\leq (2k+1)(|S|+1)^{k-1}$. Notice that $k\geq 2$, therefore $\VC(G)=|S|\leq 4k^2$.
    
    For (4), color \(V(G)\) with at most \(f\) colors by the images. For any \(\alpha \in [2k+1]\), we call a vertex \(v^{(\alpha)}_{\Vec{x}}\) \emph{bad} if it is assigned color \(c\) and there exists an integer \(p \in [k-1]\) such that no vertex of color \(c\) exists in the set \[\{v^{(\alpha)}_{\Vec{y}} : y_{p} \neq x_{p} \text{ and } y_i = x_i \text{ for all } i \leq p-1\}.\]
    For any \( \alpha \in \{2k-1, 2k, 2k+1\} \), we say a vertex \( v^{(\alpha)}_{\Vec{x}} \) is \emph{cyclic-bad} if it has color \( c \) and there is no vertex of color \( c \) in the set \( \{v^{(\alpha)}_{\Vec{y}} : x_1 < y_1\} \). A vertex \( v \in V(G) \) is called \emph{good} if it is neither bad nor cyclic-bad. As in the proof of~\cref{Prop:K3Ktt=2k}~(4), the number of bad vertices in \( G \) is at most  
\[
\sum_{c \in [f]} \sum_{\alpha \in [2k+1]} \sum_{p \in [k-1]} m^{p-1} \cdot 1 \cdot m^{k-1-p} = (2k+1)(k-1)f m^{k-2}.
\]
For fixed \( \alpha \) and color \( c \), for any \( \Vec{x} \), there is at most one cyclic-bad vertex in the set \[ \{v^{(\alpha)}_{\Vec{y}} : \text{for all } i \in [k-1] \setminus \{1\}, \, y_i = x_i\}. \] Hence, the total number of cyclic-bad vertices in \( G \) is at most  
\[
\sum_{c \in [f]} \sum_{\alpha \in \{2k-1, 2k, 2k+1\}} m^{k-2} = 3f m^{k-2}.
\] Therefore the number of non-good vertices is at most $(2k+1)(k-1)fm^{k-2}+3fm^{k-2}<m^{k-1}$.
Thus, we can find an \( \Vec{x} \in [m]^{k-1} \) such that the vertices \( \{v_{\Vec{x}}^{(1)}, v_{\Vec{x}}^{(2)}, \dots, v_{\Vec{x}}^{(2k)}, v_{\Vec{x}}^{(2k+1)}\} \) are all good. Let \( c_\alpha \) denote the color of \( v_{\Vec{x}}^{(\alpha)} \) for each \( \alpha \in [2k+1] \). To establish that for any \( \alpha \neq \beta \), where \( \alpha, \beta \in [2k+1] \), there is an edge between the vertices of colors \( c_\alpha \) and \( c_\beta \), it suffices to consider the cases \( (\alpha, \beta) \in \{(2k-1, 2k), (2k, 2k+1), (2k+1, 2k-1)\} \), since all other cases follow similarly to the proof of~\cref{Prop:K3Ktt=2k}~(4). Since \( v_{\Vec{x}}^{(\beta)} \) is a good vertex, there exists a vertex \( v^{(\beta)}_{\Vec{y}'} \) of color \( c_\beta \) in the set \( \{v^{(\beta)}_{\Vec{y}} : x_1 < y_1\} \). Consequently, \( v_{\Vec{x}}^{(\alpha)} v^{(\beta)}_{\Vec{y}'} \) forms an edge between the vertices of colors \( c_\alpha \) and \( c_\beta \).
Therefore, \( c_1, c_2, \dots, c_{2k+1} \) form a copy of \( K_{2k+1} \) in \( F \). This finishes the proof.
\end{proof}

\section{Upper bound of~\cref{thm:KsKt}}\label{sec:upp-bd-KsKt}
We first show how to obtain the upper bound of~\cref{thm:KsKt} via~\cref{thm:transform to vectors}.
We need a partition lemma which can be derived from a lemma of Haussler~\cite{1995PackingLemma} that upper bounds the size of a packing in a set system with bounded VC-dimension.

\begin{lemma}[Partition lemma~\cite{2024GraphToGeom}]\label{lemma:Partition}
  Let $d$ be a positive integer and $G$ be an $n$-vertex graph with VC-dimension at most $d$. Let $\mathcal{F}:=\{N_{G}(v):v\in V(G)\}$. 
  Then for any $1\le a\le n$, there is a partition $V(G)=V_{1}\sqcup V_{2}\sqcup\cdots\sqcup V_{r}$, where $r=e(d+1)\cdot (2e)^{d}\big(\frac{n}{a}\big)^{d}$, such that for each $i\in [r]$, any pair of vertices $u,v\in V_{i}$ satisfies that $|N_{G}(v)\triangle N_{G}(u)|\le 2a$. 
\end{lemma}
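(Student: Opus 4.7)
The plan is to obtain the partition by choosing representatives via Haussler's packing lemma and then assigning each vertex to its closest representative in the symmetric-difference (pseudo)metric on $2^{V(G)}$.

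Recall that Haussler's packing lemma asserts: if $\mathcal{H}\subseteq 2^{[n]}$ has VC-dimension at most $d$ and is $a$-separated (meaning $|A\triangle B|>a$ for every distinct pair $A,B\in \mathcal{H}$), then $|\mathcal{H}|\le e(d+1)(2en/a)^d$. Since any subfamily of $\mathcal{F}=\{N_G(v):v\in V(G)\}$ inherits VC-dimension at most $d$, this bound is available for any $a$-separated subfamily of $\mathcal{F}$.

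The first step is to extract, greedily (or via Zorn's lemma), a maximal $a$-separated subfamily $\mathcal{G}=\{G_1,\ldots,G_r\}\subseteq\mathcal{F}$. By Haussler's packing lemma applied with separation parameter $a$, we have $r=|\mathcal{G}|\le e(d+1)(2en/a)^d=e(d+1)(2e)^d(n/a)^d$, which matches the target count. The maximality of $\mathcal{G}$ forces every $N_G(v)\in\mathcal{F}$ to satisfy $|N_G(v)\triangle G_i|\le a$ for some $i\in[r]$, since otherwise $\mathcal{G}\cup\{N_G(v)\}$ would be a strictly larger $a$-separated family, contradicting maximality.

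The second step is to define the partition $V(G)=V_1\sqcup\cdots\sqcup V_r$ by assigning each $v\in V(G)$ to a part $V_i$ for an arbitrarily chosen index $i$ with $|N_G(v)\triangle G_i|\le a$. The triangle inequality for the symmetric-difference metric then yields, for any $u,v\in V_i$,
$$
|N_G(u)\triangle N_G(v)|\le |N_G(u)\triangle G_i|+|G_i\triangle N_G(v)|\le a+a=2a,
$$
as required. The only substantive ingredient is Haussler's packing lemma itself; once it is invoked, the remaining covering-via-packing argument is elementary. The main delicacy is simply ensuring that the constants line up correctly and that one invokes the packing lemma with separation parameter $a$ (rather than $2a$), so that the triangle inequality subsequently yields the desired $2a$ bound inside each part.
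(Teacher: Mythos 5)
Your proposal is correct and follows exactly the route the paper indicates: the lemma is stated as a consequence of Haussler's packing lemma, obtained by taking a maximal $a$-separated subfamily of $\{N_G(v)\}$ (whose size is bounded by the packing lemma), using maximality to turn the packing into a covering, and applying the triangle inequality for the symmetric-difference metric to get the $2a$ bound within each part. The constants line up with the cited form of Haussler's bound, so nothing further is needed.
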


Throughout this section, let \( d \geq 1 \) and $t\ge s\ge 3$ be positive integers and
set
$$r := t - s + 3 \geq 3 \quad \text{ and } \quad \beta_j:=(s-2-j)(r-1)+1, \text{ for } j=0,1,\ldots,s-2.$$
So, $t=s+r-3$ and $\frac{(s-3)(t-s+2)+1}{(s-2)(t-s+2)+1}=\frac{\beta_1}{\beta_0}$. 
To prove the upper bound in~\cref{thm:KsKt}, we need to show the following. Let \( \varepsilon > 0 \) and \( G \) be an \( n \)-vertex \( K_s \)-free graph with VC-dimension at most \( d \). If \( \delta(G) \geq (\frac{\beta_1}{\beta_0} + \varepsilon)n \), then \( G \) admits a homomorphism to a \( K_{s+r-3} \)-free graph \( H \) with \( |H| = O_{\varepsilon,d,r}(1) \).

As $\delta(G)\ge (\frac{s-3}{s-2}+\varepsilon)n$ and $\delta_{\chi}^{\textup{VC}}(K_s)=\frac{s-3}{s-2}$, $G$ can be colored by a bounded number of colors, say $C$ colors. Then, apply~\cref{lemma:Partition} to refine the color classes of this coloring to obtain a partition \( V(G) = U_{1} \sqcup \cdots \sqcup U_{K} \), where \( K \leq C \cdot e(d+1)(2e)^{d} \big(\frac{2C \cdot r!}{\varepsilon}\big)^{d} \), such that each \( U_{i} \) is an independent set, and for any pair of vertices \( u, v \in U_{i} \), $i\in[K]$, we have \( |N_{G}(u) \triangle N_{G}(v)| \leq \frac{\varepsilon n}{C \cdot r!} \). 

Define an auxiliary graph \( H = H(U_{1}, \ldots, U_{K}) \) with vertex set \( \{U_{1}, \ldots, U_{K}\} \), and \( U_{i}U_{j} \in E(H) \) if and only if there is an edge between $U_{i}$ and $U_{j}$ in $G$. Clearly, \( G \xrightarrow{\textup{hom}} H \). We shall show that \( H \) is our desired bounded size \( K_{s + r - 3} \)-free homomorphic image of $G$. To this end, we fix \( r \geq 3 \) and induct on $s\ge 3$.

For the base case \( s = 3 \), suppose to the contrary that there exists a copy of \( K_{r} \) in \( H \), with parts \( U_{i_{1}}, \ldots, U_{i_{r}} \). Take an arbitrary vertex from each part, denoted as \( v_{i_{j}} \in U_{i_{j}} \) for each \( j \in [r] \). Since when $s=3$, \( \delta(G) \geq \big(\frac{1}{r} + \varepsilon\big)n \), the pigeonhole principle ensures that there is a pair of vertices among \( \{v_{i_{1}}, \ldots, v_{i_{r}}\} \) with at least \( \frac{\varepsilon n}{r^{2}} \) common neighbors. Without loss of generality, assume \( v_{i_{1}} \) and \( v_{i_{2}} \) satisfy \( |N(v_{i_{1}}) \cap N(v_{i_{2}})| \geq \frac{\varepsilon n}{r^{2}} \). Since \( (U_{i_{1}}, U_{i_{2}}) \) induces an edge in \( G \), there exist vertices \( a \in U_{i_{1}} \) and \( b \in U_{i_{2}} \) such that \( ab \in E(G) \). By the definition of \( H \), we have \( |N(a) \cap N(b)| \geq \frac{\varepsilon n}{r^{2}} - 2 \cdot \frac{\varepsilon n}{C r!} \geq \frac{\varepsilon n}{2r^{2}} \). This implies that there is at least one vertex in \( N(a) \cap N(b) \), contradicting the assumption that \( G \) is triangle-free. This concludes the base case.

For the inductive step, fix $s\ge 4$. Suppose there is a copy of \( K_{s + r - 3} \) in \( H \). Without loss of generality, assume that \( U_{1}, U_{2}, \ldots, U_{s + r - 3} \) form a copy of \( K_{s + r - 3} \) in \( H \). As $U_1,\ldots, U_{s+r-3}$ form a clique, by the definition of $H$, for each distinct pair \( (i, j) \in \binom{[s + r - 3]}{2} \), we can select a vertex \( u_{j}^{i} \in U_{j} \) and a vertex \( u_{i}^{j} \in U_{i} \) such that \( u_{j}^{i} u_{i}^{j} \in E(G) \). The number of chosen vertices in each part is at most \( s + r - 4 \), and the total number of chosen vertices is at most \( 2 \binom{s + r - 3}{2} \). For each \( \ell \in [s + r - 3] \), define 
\[ N_{\ell} := \bigcap_{h \in [s + r - 3] \setminus \{\ell\}} N_{G}(u_{\ell}^{h})\]
to be the common neighbors of the chosen vertices in $U_{\ell}$. By the definition of \( H \), we observe that for each \( \ell \in [s + r - 3] \), $$|N_{\ell}| \geq \delta(G) - (s + r - 4) \cdot \frac{\varepsilon n}{C \cdot r!} \geq \left( \frac{\beta_1}{\beta_0} + \frac{ \varepsilon}{2} \right) n.$$ 

For each vertex $x\in V(G)$, we assign a binary vector $\boldsymbol{v}(x)=(\boldsymbol{v}(x)_{1},\ldots,\boldsymbol{v}(x)_{s+r-3})$ of length $s+r-3$, where for each $i\in [s+r-3]$, $\boldsymbol{v}(x)_{i}=1$ if and only if $x\in N_{\ell}$ and otherwise $\boldsymbol{v}(x)_{i}=0$. We claim that inequality~\eqref{eq:onlyone} in~\cref{thm:transform to vectors} holds. Indeed, suppose $x_{1},x_{2},\ldots,x_{s-2}\in V(G)$ form a copy of $K_{s-2}$, but
    \begin{equation*}
        |\{\ell\in[s+r-3]:\boldsymbol{v}(x_{1})_{\ell}=\boldsymbol{v}(x_{2})_{\ell}=\cdots=\boldsymbol{v}(x_{s-2})_{\ell}=1\}|\ge 2.
    \end{equation*}
Then there exist some $\ell_{1},\ell_{2}\in [s+r-3]$ such that $u_{\ell_{1}}^{\ell_{2}},u_{\ell_{2}}^{\ell_{1}},x_{1},\ldots,x_{s-2}$ form a copy of $K_{s}$ in $G$, a contradiction. Thus, by~\cref{thm:transform to vectors}, $M = \sum_{x \in V(G)} \| \boldsymbol{v}(x) \| \leq \frac{\beta_1}{\beta_0} (s+r-3)n$. 

Let us bound $M$ in another way. Consider the $n$-by-$(s+r-3)$ 0/1-matrix where $\boldsymbol{v}(x_{i})$ is the $i$-th row. Then $M$ is the number of $1$s in this matrix and $|N_{\ell}|$ is the column sum for the $\ell$-th column. Thus, $M = \sum_{\ell=1}^{s+r-3}|N_\ell|>  \frac{\beta_1}{\beta_0} (s+r-3)n$, a contradiction. 

This completes the proof of \Cref{thm:KsKt}.

\subsection{Proof of \Cref{thm:transform to vectors}}
  Suppose for the sake of contradiction that  
\begin{align}\label{ineq:contrary}  
    M = \sum_{x \in V(G)} \|\boldsymbol{v}(x)\| > \frac{(s-3)(r-1)+1}{(s-2)(r-1)+1} \cdot tn.  
\end{align}
Then we can find a `heavy' $K_{s-2}$ in $G$ as follows. 

\begin{claim}\label{prop:exist Ks-2 copy}
   There exist vertices \( x_0, \ldots, x_{s-3} \in V(G) \) that form a copy of \( K_{s-2} \) such that 
   \begin{align}\label{eq:sum=1+t(s-3)}  
    \sum_{j=0}^{s-3} \|\boldsymbol{v}(x_j)\| = 1 + t(s-3),  
\end{align}
    and for every $i\in \{0,1,\ldots,s-4\}$,
 \begin{align}\label{eq:max weight}
    \|\boldsymbol{v}(x_0)\|=\max\{\|\boldsymbol{v}(y)\| : y \in V(H)\}  \quad \text{and} \quad \|\boldsymbol{v}(x_{i+1})\|=
    \max\{\|\boldsymbol{v}(y)\| : y \in \bigcap_{0 \leq j \leq i} N_H(x_j)\}.
\end{align} 
\end{claim}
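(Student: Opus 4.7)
The plan is to construct $x_0, \ldots, x_{s-3}$ directly by the greedy rule prescribed in \eqref{eq:max weight}, verify they form a $K_{s-2}$ in $G$, and then sandwich the sum between matching upper and lower bounds to obtain \eqref{eq:sum=1+t(s-3)}. First, I would pick $x_0$ realizing $\max_{y \in V(G)} \|\boldsymbol{v}(y)\|$, and for $i = 0, 1, \ldots, s-4$ pick $x_{i+1}$ realizing $\max_{y \in N_i} \|\boldsymbol{v}(y)\|$, where $N_i := \bigcap_{j=0}^{i} N_G(x_j)$. To verify this greedy procedure is well-defined, I will show $N_i \neq \emptyset$ for all $i \leq s-4$ via inclusion--exclusion on the hypothesis $\delta(G) \geq \frac{\beta_1}{\beta_0} n$: one has $|N_i| \geq (i+1)\delta(G) - in \geq (i+1)\frac{\beta_1}{\beta_0}n - in$. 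Combined with the identity $(s-3)\beta_1 - (s-4)\beta_0 = r$ (a short calculation from $\beta_j = (s-2-j)(r-1) + 1$), this gives $|N_{s-4}| \geq \frac{r}{\beta_0}n > 0$, so $\{x_0, \ldots, x_{s-3}\}$ is indeed a $K_{s-2}$ in $G$, and by construction satisfies \eqref{eq:max weight}.

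For the upper bound on the sum, I will apply \eqref{eq:onlyone} to the $K_{s-2}$ just constructed: at most one coordinate $\ell^\star \in [t]$ has $\boldsymbol{v}(x_j)_{\ell^\star} = 1$ for every $j$, and every other coordinate contributes at most $s-3$ to the total weight. Summing coordinate-by-coordinate gives
\[
\sum_{j=0}^{s-3} \|\boldsymbol{v}(x_j)\| \;\leq\; (s-2) + (t-1)(s-3) \;=\; 1 + t(s-3).
\]

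For the matching lower bound, I plan to argue by contradiction: suppose the greedy choice produces $\sum_j \|\boldsymbol{v}(x_j)\| \leq t(s-3)$, and try to derive $M \leq \frac{\beta_1}{\beta_0} tn$, contradicting \eqref{ineq:contrary}. Writing $w_j := \|\boldsymbol{v}(x_j)\|$ and partitioning $V(G)$ into the layers $V(G) \setminus N_0, \, N_0 \setminus N_1, \, \ldots, \, N_{s-5} \setminus N_{s-4}, \, N_{s-4}$, the greedy maximality forces every vertex in $N_{i-1} \setminus N_i$ (with $N_{-1} := V(G)$) to satisfy $\|\boldsymbol{v}(y)\| \leq w_i$ for $0 \leq i \leq s-4$, and every vertex in $N_{s-4}$ to satisfy $\|\boldsymbol{v}(y)\| \leq w_{s-3}$. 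Combining these layered weight bounds with the inclusion--exclusion lower bounds on $|N_i|$ and then optimizing over monotone $w_0 \geq w_1 \geq \cdots \geq w_{s-3}$ in $[0, t]$ subject to $\sum_j w_j \leq t(s-3)$ should yield the bound $M \leq \frac{\beta_1}{\beta_0} tn$. The hard part will be this extremal optimization: the layered weight bounds and the nested neighborhood sizes have to interlock precisely so that the worst case matches the threshold $\frac{\beta_1}{\beta_0}$, reflecting the sharpness of the minimum-degree hypothesis in \cref{thm:KsKt}.
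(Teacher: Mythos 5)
Your proposal is correct in its logic, and the greedy construction and upper bound via \eqref{eq:onlyone} are exactly what the paper does. Where you diverge is the lower bound: the paper proves $\sum_j\|\boldsymbol{v}(x_j)\|>t(s-3)$ by an iterative contradiction at each stage $j$ (producing the sequence $a_j$ via a recursion $\beta_j A_j - \beta_{j+1}A_{j-1}\ge\beta_1$, which telescopes to $A_{s-3}>s-3$), whereas you propose a one-shot contradiction: assume the total weight is $\le t(s-3)$, feed the $s-2$ layered weight bounds through Abel summation against the nested neighborhood sizes $|N_i|\ge(i+1)\delta(G)-in$, and optimize a linear functional over the simplex $\{t\ge w_0\ge\cdots\ge w_{s-3}\ge 0,\ \sum w_j\le t(s-3)\}$.

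I checked that your single LP does close: Abel summation plus monotonicity of the $w_j$ show the bound on $M/n$ is decreasing in $\delta(G)/n$, so one may plug in $\beta_1/\beta_0$; then the identity $(i+1)\beta_1-i\beta_0=\beta_{i+1}$ (telescoping from $\beta_j-\beta_{j+1}=r-1$) collapses the sum to $\tfrac{1}{\beta_0}\bigl[(r-1)\sum_{j=0}^{s-3}w_j+w_{s-3}\bigr]$, whose maximum over the simplex is attained at $w_0=\cdots=w_{s-3}=\tfrac{t(s-3)}{s-2}$ and equals $\tfrac{t(s-3)}{s-2}$, which is strictly less than $\tfrac{\beta_1 t}{\beta_0}$ since $(s-2)\beta_1-(s-3)\beta_0=1>0$. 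So the contradiction with \eqref{ineq:contrary} is obtained, and you in fact get a margin. The trade-off: the paper's step-by-step version avoids the global optimization and keeps each estimate elementary (useful also because the same layered inequality is reused inside each step), while your version concentrates all the work into one clean extremal problem and makes the structural role of the $\beta_j$'s more transparent. Both are valid; yours is a genuine reorganization rather than the same proof. The only part of your sketch left to supply is precisely the LP analysis you flagged, which does go through as above.
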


Let us first derive~\Cref{thm:transform to vectors}. Given $x_{0},x_{1},\ldots,x_{s-3}$ from~\cref{prop:exist Ks-2 copy}, we observe that vertices in their common neighborhood cannot have too large weight.
    \begin{claim}\label{claim:CommonNeighborSmallWeight}
   For each vertex $w\in \bigcap_{0\le h\le s-3} N(x_h)$, $\|\boldsymbol{v}(w)\|\le \binom{s-2}{s-3} = s-2$. 
\end{claim}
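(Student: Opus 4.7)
The plan is to read off structural rigidity from the identity~\eqref{eq:sum=1+t(s-3)} combined with~\eqref{eq:onlyone}, and then run a short two-case analysis. Viewing $\boldsymbol{v}(x_0),\ldots,\boldsymbol{v}(x_{s-3})$ as the columns of a $t\times(s-2)$ binary matrix $M$, identity~\eqref{eq:sum=1+t(s-3)} says that $M$ has $1+t(s-3)$ ones and hence exactly $t-1$ zeros. On the other hand, applying~\eqref{eq:onlyone} to the $K_{s-2}$-copy $\{x_0,\ldots,x_{s-3}\}$ forbids more than one all-$1$ row in $M$; but every row that is not all-$1$ contributes at least one zero, and the zero budget is tight. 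Thus $M$ has exactly one all-$1$ row $j^\star\in[t]$, and each of the remaining $t-1$ rows contains \emph{exactly} one zero. For $j\neq j^\star$, let $i(j)\in\{0,\ldots,s-3\}$ denote the unique index with $\boldsymbol{v}(x_{i(j)})_j=0$.

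Now set $J:=\{j\in[t]:\boldsymbol{v}(w)_j=1\}$, so that $\|\boldsymbol{v}(w)\|=|J|$. Since $w\in\bigcap_h N_G(x_h)$, for each $i\in\{0,\ldots,s-3\}$ the set $\{w\}\cup\{x_h:h\neq i\}$ is a $K_{s-2}$ in $G$, so~\eqref{eq:onlyone} is available on any such copy. Split on whether $j^\star\in J$. In the case $j^\star\in J$, if $J$ contained any other element $j'$, then on the $K_{s-2}$-copy $\{w\}\cup\{x_h:h\neq i(j')\}$ both rows $j^\star$ and $j'$ would be all-$1$ (the former trivially, the latter because we deleted the unique zero of row $j'$), contradicting~\eqref{eq:onlyone}; hence $|J|=1$. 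In the case $j^\star\notin J$, every $j\in J$ satisfies $j\neq j^\star$ and so has a well-defined $i(j)$; the map $j\mapsto i(j)$ on $J$ must then be injective, since if $i(j')=i(j'')$ for distinct $j',j''\in J$ then both rows $j'$ and $j''$ would be all-$1$ on $\{w\}\cup\{x_h:h\neq i(j')\}$. Consequently $|J|\le s-2$ in either case, as required.

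The only real content is the rigidity step, which pins $M$ down as ``one all-$1$ row plus rows with a single zero''; once that structure is in place, the case analysis is essentially bookkeeping. Note that the weight-maximality condition~\eqref{eq:max weight} is not needed for this claim, since it was already consumed in~\cref{prop:exist Ks-2 copy} to establish~\eqref{eq:sum=1+t(s-3)}.
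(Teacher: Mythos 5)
Your proof is correct and follows essentially the same route as the paper: the zero-budget rigidity step pinning down the matrix as one all-$1$ row plus $t-1$ rows each with a single zero is identical, and the final contradiction is extracted from the same copies $\{w\}\cup\{x_h:h\neq i\}$ of $K_{s-2}$ via~\eqref{eq:onlyone}. Your explicit case split on whether $j^\star\in J$, combined with injectivity of $j\mapsto i(j)$, is simply a cleaner rephrasing of the paper's pigeonhole into $s-2$ blocks under the convention that the all-$1$ index $p$ ``belongs to every block.''
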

\begin{poc}
By~\eqref{eq:onlyone} and~\eqref{eq:sum=1+t(s-3)}, we observe that there must exist exactly one index \( p \in [t] \) such that  
\[
\boldsymbol{v}(x_0)_{p} = \cdots = \boldsymbol{v}(x_{s-3})_{p} = 1,
\]  
and for all other indices \( \ell \in [t] \setminus \{p\} \), we have  
\[
|\{ a \in \{0,1,\dots,s-3\} : \boldsymbol{v}(x_a)_{\ell}  = 1 \}| = s-3.
\]  

As the number of distinct binary vectors of length \( s-2 \) with exactly \( s-3 \) ones is  \( \binom{s-2}{s-3} = s-2 \), we can partition all indices \( \ell \in [t] \setminus \{p\} \) into \( s-2 \) blocks \( \mathcal{B}_0, \mathcal{B}_1, \dots, \mathcal{B}_{s-3} \), such that for any two indices \( \ell_1, \ell_2 \in\mathcal{B}_i\), $i\in \{0,1,\ldots, s-3\}$, we have  
\[
\boldsymbol{v}(x_a)_{\ell_1} = \boldsymbol{v}(x_a)_{\ell_2}=1 \quad\text{ if and only if }\quad a \in \{0,1,\dots,s-3\}\setminus\{i\}.
\]    

Therefore, if \( \|\boldsymbol{v}(w)\| \geq s-1 \) for some $w\in \bigcap_{0\le h\le s-3} N(x_h)$, then there must exist two distinct indices \( k_1, k_2 \in [t] \) such that  
\[
\boldsymbol{v}(w)_{k_1} = \boldsymbol{v}(w)_{k_2} = 1,
\]  
and \( k_1, k_2 \) belong to the same block, say $k_1,k_2\in\mathcal{B}_{0}$ (or one of \( k_1 \) and \( k_2 \) equals \( p \), and we consider the index $p$ belongs to every block). Then consider $w,x_1,\ldots,x_{s-3}$, which form a copy of $K_{s-2}$ and   
\[
k_1,k_2\in \{ q \in [t] \mid \boldsymbol{v}(w)_q = \boldsymbol{v}(x_1)_q = \cdots = \boldsymbol{v}(x_{s-3})_q = 1 \}.
\]  
This contradicts~\eqref{eq:onlyone}, completing the proof. 
\end{poc}

We can now bound $M$ to arrive at a contradiction. By~\eqref{eq:max weight},
\begin{align}\label{eq:bound-M-above}
    M \leq &(n - |N(x_0)|) \cdot \|\boldsymbol{v}(x_0)\|  
    + |N(x_0) \setminus N(x_1)| \cdot \|\boldsymbol{v}(x_1)\|  
    + \cdots \notag\\
    &\quad + |\big(\bigcap_{0 \leq h \leq s-4} N(x_h)\big) \setminus N(x_{s-3})| \cdot \|\boldsymbol{v}(x_{s-3})\|  
    + |\bigcap_{0 \leq h \leq s-3} N(x_h)| \cdot (s-2).
\end{align}
The right-hand side above is maximized when the terms  
\begin{equation*}  
    (n - |N(x_0)|), \, |N(x_0) \setminus N(x_1)|, \, \ldots, \, |\big(\bigcap_{0 \leq h \leq s-4} N(x_h)\big) \setminus N(x_{s-3})|  
\end{equation*}  
are sequentially maximized and note that each of these terms is at most $n-\delta(G)=\frac{(r-1)n}{(s-2)(r-1)+1}$. Upon noticing
\begin{equation}\label{eq:comm-neigh}
    |\bigcap_{0 \leq h \leq s-3} N(x_h)|\ge (s-2)\delta(G)-(s-3)n\ge \frac{n}{(s-2)(r-1)+1}
\end{equation}
and recalling~\eqref{eq:sum=1+t(s-3)}, we get that 
\begin{align}\label{eq:bound-M-above2}
    M &< \frac{(r-1)n}{(s-2)(r-1)+1}
    \sum_{j=0}^{s-3}\|\boldsymbol{v}(x_j)\| + \frac{(s-2)n}{(s-2)(r-1)+1}\\
    &= \frac{n}{(s-2)(r-1)+1}
    \big(
    (s-3)(r-1)+1
    \big)(s+r-3),\notag
\end{align}
which is a contradiction to the assumption (\ref{ineq:contrary}).
This completes the proof of~\cref{thm:transform to vectors}.

\subsection{Proof of~\cref{prop:exist Ks-2 copy}}

    We begin with the vertex \( x_0 \in V(H) \) with maximum weight, that is,  
\[
\|\boldsymbol{v}(x_0)\| = \max\{\|\boldsymbol{v}(y)\| : y \in V(H)\} := t - g,
\]  
where \( g \geq 0 \).  
By \eqref{ineq:contrary},  \( g \leq \frac{(r-1)(s+r-3)}{(s-2)(r-1)+1} \). Set $a_0=1$. We shall iteratively find the other $s-3$ vertices $x_1,\ldots,x_{s-3}$ such that the following holds.
    \begin{itemize}
    \item $\|\boldsymbol{v}(x_{i})\| = a_{i}(t-g) + g$ for $i\in [s-3]$, where $a_{1}\ge a_{2}\ge\cdots\ge a_{s-3}$ and \begin{equation}\label{ineq:recursive}
    a_{i}
    \ge
    \frac{(s-3-\sum_{\ell=0}^{i-1}a_{\ell})(r-1)+1}{(s-2-i)(r-1)+1}
    \end{equation}
\end{itemize} 

Assume that we have already found the vertices $x_0,x_{1},\ldots,x_{j-1}$ for $1\le j\le s-3$ satisfying~\eqref{ineq:recursive} and for each $h\in [j-1]$, $x_{h}\in \bigcap_{0\le h'<h} N_{G}(x_{h'})$. 
Suppose that for any vertex $y\in \bigcap_{0\le h<j}N_{G}(x_{h})$, 
$$\|\boldsymbol{v}(y)\|\le b:= g+\frac{(s-3-\sum_{h=0}^{j-1}a_{h})(r-1)+1}{(s-2-j)(r-1)+1}\cdot (t-g).$$
Then as in~\eqref{eq:bound-M-above} and~\eqref{eq:bound-M-above2}, we have 
\begin{align*}
   M &\le \, (n - |N_{G}(x_{0})|)\|\boldsymbol{v}(x_0)\| + |\bigcap_{h=0}^{j-2}N_{G}(x_{h})\setminus N_{G}(x_{j-1})| \|\boldsymbol{v}(x_{j-1})\|  +|\bigcap_{h=0}^{j-1}N_{G}(x_{h})|\cdot b\\
 &\le  \frac{(r-1)n}{(s-2)(r-1)+1}
    \sum_{h=0}^{j-1}\|\boldsymbol{v}(x_h)\| + (j\cdot\delta(G)-(j-1)n)\cdot b\\
 &\le   \frac{(r-1)n}{(s-2)(r-1)+1} \cdot \Big(  \sum_{h=0}^{j-1} a_h (t - g) + (j-1)g \Big) + \frac{(s-2-j)(r-1) + 1}{(s-2)(r-1) + 1} \cdot bn . 
\end{align*}
This, together with~\eqref{ineq:contrary}, infers that
\begin{align*}
    b > t - \frac{
    (r-1)(t-g)(\sum_{h=0}^{j-1}a_{h} -j +1)
    }{(s-2-j)(r-1) +1}
    = g + \frac{
    (s-3-\sum_{h=0}^{j-1}a_{h})(r-1)+1
    }{(s-2-j)(r-1)+1} (t-g) \ge b,
\end{align*}
which is a contradiction.
Therefore, we can find a vertex $x_j\in \bigcap_{0\le h<j} N_{G}(x_{h})$ with $\|\boldsymbol{v}(x_{j})\| = a_j(t-g) + g\ge b$ such that $a_j\le a_{j-1}$ and
\begin{align*}
    a_j\ge  \frac{
    (s-3-\sum_{h=0}^{j-1}a_{h})(r-1)+1
    }{(s-2-j)(r-1)+1}.
\end{align*}

We have thus found $x_0,\ x_1,\ldots,\ x_{s-3}$ which induce a copy of $K_{s-2}$ and satisfies~\eqref{eq:max weight} by construction. We are left to verify~\eqref{eq:sum=1+t(s-3)}. Let $A_j := \sum_{0\le h \le j}a_h$ for each $j\in\{ 0,1,\ldots,s-3\}$.
By (\ref{ineq:recursive}) we have
\begin{align*}
    (A_j - A_{j-1}) \ge 
    \frac{ (r-1)(s-3-A_{j-1}) +1}
    { (s-2-j)(r-1) +1},
\end{align*}
which implies
$$\beta_jA_j-\beta_{j+1}A_{j-1}\ge \beta_1.$$
Solving this recursion, we get
$$A_j\ge \frac{\beta_{j+1}}{\beta_1}A_0+\beta_{j+1}\beta_1\Big(\frac{1}{\beta_{j+1}\beta_{j}}+\frac{1}{\beta_{j}\beta_{j-1}}+\cdots+\frac{1}{\beta_2\beta_1}\Big).$$
As $\frac{1}{\beta_{j+1}}-\frac{1}{\beta_{j}}=\frac{r-1}{\beta_{j+1}\beta_{j}}$ and $A_0 =a_0= 1$, we see that $A_j\ge \frac{\beta_{j+1}}{\beta_1}+\frac{\beta_1-\beta_{j+1}}{r-1}$. In particular, since $\beta_{s-2}=0$, 
\begin{equation}\label{eq:As-3}
 A_{s-3}\ge \frac{\beta_1}{r-1}=\frac{(s-3)(r-1)+1}{r-1}>s-3.    
\end{equation}   

Now, on one hand, as $x_0,\ldots,x_{s-3}$ induce a $K_{s-2}$, by~\eqref{eq:onlyone}, 
$$\sum_{0\le h\le s-3} \|\boldsymbol{v}(x_h)\| \le 1\cdot (s-2) + (t-1)\cdot (s-3) = t(s-3)+1.$$
On the other hand, by~\eqref{eq:As-3},
    $$\sum_{0\le h\le s-3} \|\boldsymbol{v}(x_h)\|\ge
    A_{s-3}(t-g) + (s-3)g > t(s-3).$$
Thus, $\sum_{0\le h\le s-3} \|\boldsymbol{v}(x_h)\|= t(s-3) +1$ as desired. This completes the proof of~\cref{prop:exist Ks-2 copy}.


\section{Dense graphs without small odd cycles}\label{sec:OddCycles}
We will take advantage of the following result in~\cite[Proposition~3.5]{2020COMBHomoOddCycle} and we encourage the interested readers to the short proof in~\cite{2020COMBHomoOddCycle}.
\begin{lemma}[\cite{2020COMBHomoOddCycle}]\label{lemma:NoSmallOddCycles}
   For fixed integer $k\ge 2$, let $\varepsilon>0$ and $G$ be an $n$-vertex $C_{2k-1}$-free graph with minimum degree $\delta(G)\ge (\frac{1}{2k-1}+\varepsilon)n$, then $G$ is $C_{\ell}$-free for every odd $\ell$ with $k\le\ell\le 2k-1$.
\end{lemma}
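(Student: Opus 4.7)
The plan is to argue by contradiction: suppose $G$ contains an odd cycle of some length $\ell$ with $k \le \ell \le 2k-3$, and choose $\ell$ to be minimum in this forbidden range. I aim to show that under the minimum-degree hypothesis, $G$ must then contain a $C_{2k-1}$, contradicting $C_{2k-1}$-freeness. Let $C = v_0 v_1 \cdots v_{\ell-1} v_0$ be such a shortest odd cycle.

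The core idea is iterative extension: produce from $C$ an odd cycle of length $\ell + 2$, and iterate $\frac{2k-1-\ell}{2}$ times to obtain a $C_{2k-1}$. To perform one extension step, focus on an edge $v_0 v_1$ of $C$ and consider the attachment sets $A := N_G(v_0) \setminus V(C)$ and $B := N_G(v_1) \setminus V(C)$. By the minimum-degree assumption, $|A|, |B| \ge (\frac{1}{2k-1} + \varepsilon)n - \ell$. I would then search for $u_0 \in A$ and $u_1 \in B$ with $u_0 u_1 \in E(G)$ such that neither of $u_0, u_1$ has any further neighbor in $V(C)$. Such a pair yields the desired odd cycle $v_0 u_0 u_1 v_1 v_2 \cdots v_{\ell-1} v_0$ of length $\ell + 2$. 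Existence of such a pair would follow from a double-count: the number of edges between $A$ and $B$ is lower-bounded via the min-degree condition, while the number of \emph{bad} pairs (those involving a vertex with a shortcut to some $v_j$ with $j \ne 0,1$) is small, because each vertex outside $V(C)$ has at most $\ell$ neighbors in $V(C)$.

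The main obstacle is that directly lower-bounding $e(A,B)$ from the min-degree hypothesis is delicate: $|A|$ and $|B|$ are themselves only roughly $\frac{n}{2k-1}$, and a crude inclusion-exclusion bound such as $|N_G(u_0) \cap B| \ge 2\delta(G) - n$ is non-positive. Overcoming this will require exploiting $C_{2k-1}$-freeness more crucially: if no good pair exists, then every candidate edge $u_0 u_1$ carries a shortcut, and any shortcut from $u_0$ (say) to $v_j$ with $j \notin \{0,1\}$ splits $C$ into two sub-arcs whose lengths, together with the detour $v_0 u_0 v_j$, produce two cycles of total length $\ell + 4$, exactly one of which is odd. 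If this odd shortcut-cycle has length in $[k, 2k-1]$, the minimality of $\ell$ or the $C_{2k-1}$-freeness of $G$ is already violated; otherwise its length is less than $k$, which confines $j$ to a narrow range of indices near $0$ or $\ell-1$. A concluding pigeonhole argument, combining this sharp restriction on bad shortcuts with the global edge-count between $A$ and $B$, then forces the existence of a good pair and completes the extension. The delicate part is ensuring the counts balance for every $\ell \in [k, 2k-3]$; the hypothesis $\delta(G) \ge (\tfrac{1}{2k-1}+\varepsilon)n$ appears tight for exactly this purpose.
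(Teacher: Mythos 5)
The paper does not actually prove this lemma; it imports it verbatim from Ebsen and Schacht (Proposition~3.5 of~\cite{2020COMBHomoOddCycle}) and points the reader there, so there is no in-paper proof to compare against and your sketch has to stand on its own. The broad plan---take a shortest odd cycle $C$ of length $\ell\in[k,2k-3]$ and lengthen it by $2$ at a time until a $C_{2k-1}$ appears---is a reasonable direction, and the parity bookkeeping for the two subcycles cut off by a chord $u_0 v_j$ (total length $\ell+4$, exactly one odd) is correct. But the decisive gap is exactly the one you flag yourself: you never establish that an edge exists between $A=N(v_0)\setminus V(C)$ and $B=N(v_1)\setminus V(C)$. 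Since $|A|,|B|\approx n/(2k-1)$ and $k\ge 2$, we have $|A|+|B|<n$, so inclusion--exclusion yields nothing, and nothing in the sketch excludes the possibility that $A$ and $B$ are anticomplete, in which case the extension step simply fails. The closing ``pigeonhole argument'' is a placeholder for the argument that is actually needed; without it this is an outline, not a proof.

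Several secondary points would also need repair. The assertion that an odd shortcut-cycle of length $<k$ ``confines $j$ to a narrow range of indices near $0$ or $\ell-1$'' is quantitatively wrong: for $\ell$ close to $2k-3$, the allowed $j$ are essentially all odd $j$ up to about $k-2$ together with all even $j$ from about $k$ onwards, i.e., roughly half of all indices, not a narrow window. (This consideration is in fact somewhat beside the point: a vertex $u_0\in A$ adjacent to a forbidden $v_j$ would already place an odd cycle of length in $[k,\ell-2]$ into $G$ and immediately contradict the minimality of $\ell$, so such $u_0$ cannot exist at all, and the genuine difficulty is solely the edge count between $A$ and $B$.) The dichotomy ``length in $[k,2k-1]\Rightarrow$ contradiction, else $<k$'' also overlooks the case $j\in\{2,\ell-2\}$, where the odd shortcut-cycle has length exactly $\ell$, which contradicts neither minimality nor $C_{2k-1}$-freeness. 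Finally, once one extension succeeds, $G$ is known to contain both $C_\ell$ and $C_{\ell+2}$, so the pool of minimality-based contradictions shrinks with each iteration; you would need to explain why the iteration still closes, or instead take $\ell$ maximal so that a single extension already reaches $2k-1$ or contradicts maximality---but then the shortcut argument, which relies on producing a \emph{shorter} odd cycle, has to be rethought.
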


\subsection{Warm up: $C_{5}$-free graphs}\label{sec:warmup}
To better illustrate our framework, we begin by presenting a proof specifically for $C_{5}$-free graphs. Compared to the previous proof in~\cite{2019JGTC3C5}, our proof is simpler, and the constant dependence is better than $2^{2^{2^{\textup{Poly}(\frac{1}{\varepsilon})}}}$ in~\cref{thm:SharpThreshold}. We restate the statement here for clarity. 

\begin{theorem}\label{thm:C5}
For any $\varepsilon>0$, let $G$ be an $n$-vertex maximal $C_{5}$-free graph with minimum degree $\delta(G)\ge (\frac{1}{5}+\varepsilon)n$, then $G=H[\cdot]$ for some $H$, where $|H|\le 2^{(\frac{1000}{\varepsilon})^{2}}$.
\end{theorem}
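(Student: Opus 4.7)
The plan is to leverage the bounded VC-dimension machinery to produce a coarse partition of $V(G)$, which is then refined into a twin-partition using the maximality of $G$ as a $C_{5}$-free graph.

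First I would apply \cref{lemma:NoSmallOddCycles} with $k=3$: since $G$ is $C_{5}$-free and $\delta(G)\ge(\tfrac{1}{5}+\varepsilon)n$, $G$ is also $C_{3}$-free, i.e.~triangle-free. Next, by \cref{lemma:Bounded VC-dimension} with $k=3$ (or, for a sharper absolute constant adequate for this warmup, a direct argument tailored to triangle-free, $C_{5}$-free graphs of high min-degree), the VC-dimension of $G$ is bounded by some constant $d$. Applying \cref{lemma:Partition} with parameter $a=\varepsilon n/C_{0}$ for a suitable constant $C_{0}=C_{0}(d)$ then yields a partition $V(G)=V_{1}\sqcup\cdots\sqcup V_{r}$ with $r\le 2^{(1000/\varepsilon)^{2}}$ such that any two vertices $u,v$ in the same part satisfy $|N(u)\triangle N(v)|\le 2a$.

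The heart of the argument is to upgrade this partition into a twin-partition, i.e.~to show that for each $V_{i}$ and each $u,v\in V_{i}$ we have $N(u)=N(v)$. First, each $V_{i}$ must be independent: if $uv\in E(G)$ then triangle-freeness forces $N(u)\cap N(v)=\emptyset$, contradicting $|N(u)\cap N(v)|\ge \delta(G)-2a>0$. Suppose for contradiction that $u,v\in V_{i}$ with $N(u)\ne N(v)$, and pick $w\in N(u)\setminus N(v)$. Since $vw\notin E(G)$ and $G$ is maximal $C_{5}$-free, there exists a 4-path $v-\alpha-\beta-\gamma-w$ in $G$. If $\alpha\in N(u)\cap N(v)$, then triangle-freeness rules out $\beta=u$ (otherwise $u,w,\gamma$ would be a triangle) and $\gamma=u$ (otherwise $u,\alpha,\beta$ would be a triangle), so $u,\alpha,\beta,\gamma,w$ are five distinct vertices, and together with the edge $uw$ they form a $C_{5}$ in $G$, contradicting $C_{5}$-freeness. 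Hence every such 4-path must start with $\alpha\in N(v)\setminus N(u)$, a set of size at most $2a$.

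The main obstacle is converting this structural restriction into an outright contradiction, thereby forcing $N(u)=N(v)$. My plan is to combine the above analysis over all $w\in N(u)\setminus N(v)$ with the symmetric version obtained by swapping the roles of $u$ and $v$, and to argue that the tiny sets $N(u)\triangle N(v)$ cannot host all the 4-paths demanded by maximality without eventually producing a forbidden $C_{5}$ in $G$; a double-counting comparison of the number of 4-paths from $v$ forced to start in the small set $N(v)\setminus N(u)$ against the density of 4-paths guaranteed by the min-degree hypothesis should do the job. An alternative route I would consider is an iterative refinement: use the structural dichotomy above to split any part whose vertices are not yet twins, observing that any two non-twin vertices in such a graph must differ in a neighborhood set of size at least $\Omega(\varepsilon n)$, so the process terminates in boundedly many rounds while keeping the part-count within the desired bound. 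Once every $V_{i}$ is confirmed to be a set of twins, defining $H$ as the quotient graph on $\{V_{1},\ldots,V_{r}\}$ with $V_{i}V_{j}\in E(H)$ iff some edge of $G$ joins $V_{i}$ and $V_{j}$ yields $G=H[\cdot]$ with $|H|=r\le 2^{(1000/\varepsilon)^{2}}$, completing the proof.
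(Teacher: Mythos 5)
Your setup (triangle-freeness via \cref{lemma:NoSmallOddCycles}, bounded VC-dimension, the Haussler partition, and the independence of each part) matches the paper's, and your observation that any length-$4$ path from $v$ to $w\in N(u)\setminus N(v)$ must start inside $N(v)\setminus N(u)$ is correct. The gap is exactly where you flag it: neither of your two routes for upgrading the Haussler partition to a twin partition works. The double-counting plan fails because maximality supplies only \emph{one} path of length $4$ per non-adjacent pair; a single path needs only a single starting vertex, so the fact that the admissible starting vertices lie in a set of size $\le 2a$ creates no quantitative tension, and the minimum-degree condition gives no lower bound on the number of $v$--$w$ paths of length $4$ for a \emph{specific} pair $(v,w)$. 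The alternative route rests on the assertion that non-twins must have neighbourhood symmetric difference $\Omega(\varepsilon n)$, which is unproved and is essentially the whole content of the theorem (a blowup may a priori have very small classes, so two non-twin vertices can perfectly well land in the same Haussler part); assuming it begs the question.

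The paper avoids this by proving a weaker statement about a \emph{finer} partition: each $V_i$ is first split according to which coarse parts $V_\ell$ a vertex's neighbourhood meets, and only the refined parts are shown to be pairwise complete or anti-complete (\cref{claim:C5CompleteOrEmpty}). That extra information is used essentially: in the pigeonhole case analysis over the five vertices of the path-plus-edge configuration, the cases where the pair with many common neighbours is $(v,w_2)$ or $(w_1,w_3)$ are resolved by transferring ``$u_1$ has a neighbour in $V_{i_2}$ (resp.\ $V_{a_1}$)'' to ``$u_2$ has a neighbour in the same coarse part'', which is precisely what the refinement guarantees and what the coarse Haussler partition does not. In your framework the analogous cases (e.g.\ when the two internal path vertices $\alpha$ and $\gamma$, or $\beta$ and $w$, are the pair with many common neighbours) do not close: one only produces further legal length-$4$ paths from $v$ to $w$, not a forbidden $C_5$. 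A secondary issue is the size bound: to get $|H|\le 2^{(1000/\varepsilon)^2}$ you need $\textup{VC}(G)\le 2$ (the paper's \cref{claim: VCOnlyTwo}, via the induced-$C_6$-free lemma), not the tower-type constant from \cref{lemma:Bounded VC-dimension}; you allude to this but do not supply the argument.
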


By~\cref{lemma:Bounded VC-dimension}, the VC-dimension of an $n$-vertex maximal $C_{5}$-free graph with minimum degree $(\frac{1}{5}+\varepsilon)n$ is a constant. In fact, it is at most $2$ by the following result.
\begin{lemma}[\cite{2019JGTC3C5}]\label{lemma:InducedC6Free}
    Let $G$ be an $n$-vertex maximal $C_{5}$-free graph with $\delta(G)>\frac{n}{5}$. Then $G$ does not contain an induced $6$-cycle.
\end{lemma}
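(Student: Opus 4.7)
We argue by contradiction. Suppose $G$ contains an induced $6$-cycle $C = v_1 v_2 \cdots v_6$. The plan is to leverage the interplay between maximal $C_5$-freeness and the minimum degree condition to force a $C_5$ in $G$. First, I apply Lemma~\ref{lemma:NoSmallOddCycles} with $k=3$ to deduce that $G$ is also triangle-free. Setting $A_i := N_G(v_i)$, the basic disjointness relations follow immediately: triangle-freeness yields $A_i \cap A_{i+1} = \emptyset$, while $A_i \cap A_{i+3} = \emptyset$ because a common neighbor $x$ of the antipodal pair $v_i, v_{i+3}$ combined with either length-$3$ arc of $C$ would form a $C_5$ (e.g., $v_1 x v_4 v_5 v_6 v_1$). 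In particular, $|A_1 \cup A_4|$, $|A_2 \cup A_5|$, $|A_3 \cup A_6|$ are each strictly greater than $2n/5$.

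The key step exploits maximality at the antipodal non-edge $v_1 v_4$: both arcs of $C$ between $v_1$ and $v_4$ have length $3$, so they only close to $C_4$'s when $v_1 v_4$ is added. Hence there must exist a genuine $4$-path $v_1, a, b, c, v_4$ in $G$. A short case analysis, using the $A_i$-disjointness and triangle-freeness, rules out every configuration in which any of $a, b, c$ lies on $C$; for instance, if $a = v_2$, then the forced constraints on $b$ and $c$ turn $v_2, b, c, v_4, v_3, v_2$ into a $5$-cycle, contradicting $C_5$-freeness. Applying $C_5$-freeness to cycles formed from the $4$-path together with arcs of $C$, I further deduce that $a$'s only $C$-neighbor is $v_1$, $c$'s only $C$-neighbor is $v_4$, and $b$ has no neighbor on $C$. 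The same type of cycle-closing argument then gives $N(b) \cap (A_2 \cup A_3 \cup A_5 \cup A_6) = \emptyset$: for example, any $x \in N(b) \cap A_3$ produces the $5$-cycle $b, x, v_3, v_4, c, b$.

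To complete the contradiction, I repeat the maximality analysis at the other two antipodal non-edges $v_2 v_5$ and $v_3 v_6$, obtaining analogous $4$-paths with symmetric middle vertices $b'$ and $b''$. Each of $b, b', b''$ has more than $n/5$ neighbors confined to the complement of a union of four $A_i$'s (together with $C$), and these complements are bounded above via the disjointness relations $|A_i \cup A_{i+3}| > 2n/5$. A final inclusion–exclusion using the min-degree bound and the no-edges-between-distance-$2$ $A_i$'s property (itself a consequence of a length-$2$-arc $C_5$ argument) forces two of these restricted neighborhoods to overlap in a way that produces a concrete $C_5$. The principal obstacle is weaving the three interlocking $4$-path structures together cleanly into a single explicit $C_5$; the key leverage is that $\delta(G) > n/5$ makes each middle vertex's "allowed region" provably too small, given the $A_i$-disjointness and the structural restrictions, to fit more than $n/5$ neighbors without creating a forbidden $5$-cycle.
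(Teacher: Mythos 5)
Your opening moves are correct and verifiable: triangle-freeness via Lemma~\ref{lemma:NoSmallOddCycles}, the disjointness $A_i\cap A_{i+1}=A_i\cap A_{i+3}=\emptyset$, the existence of the $4$-path $v_1abcv_4$ with $a,b,c$ off the cycle, the restriction of $C$-adjacencies of $a,b,c$, and the crucial fact $N(b)\cap(A_2\cup A_3\cup A_5\cup A_6\cup V(C))=\emptyset$ --- all of these check out (I traced through the $a=v_2$ case and the several $C_5$-closing arguments, and they are fine). The gap is in your finishing step, which is too vague to verify and, as stated, does not seem to close.

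Here is the concrete difficulty. First, the ``no-edges-between-distance-$2$ $A_i$'s'' property is a statement about adjacency, not about containment; it does \emph{not} bound $|A_i\cap A_{i+2}|$. Inclusion-exclusion gives exactly
\[
|A_2\cup A_3\cup A_5\cup A_6| \;=\; |A_2|+|A_3|+|A_5|+|A_6|-|A_2\cap A_6|-|A_3\cap A_5|,
\]
and combining this with $|N(b)|>n/5$ and disjointness only yields $|A_2\cap A_6|+|A_3\cap A_5|>0$, which is trivially true (indeed $v_1\in A_2\cap A_6$ and $v_4\in A_3\cap A_5$). Worse, one can show $\sum_i|A_i\cap A_{i+2}|\ge\sum_i|A_i|-n>n/5$, so the distance-$2$ intersections are \emph{necessarily} of linear size, and the complement $V(G)\setminus(A_2\cup A_3\cup A_5\cup A_6\cup V(C))$ can be as large as about $3n/5$, far larger than $|N(b)|>n/5$. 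Running the same count with all three of $b,b',b''$ gives nothing better: each $A_i$ is excluded by exactly two of the three neighborhoods, and the resulting inequalities again reduce to trivialities. Second, even if you could establish an overlap, say $z\in N(b)\cap N(b')$, there is no obvious $C_5$: the natural cycles obtained by routing from $b$ to $b'$ through the paths and $C$ (e.g.\ $zbav_1v_2a'b'z$ or $zbcv_4v_5c'b'z$) all have length at least~$7$. To turn the overlap into a $C_5$ you would need a length-$3$ $b$-$b'$ path, which in turn requires something like $aa'\in E$ or $cc'\in E$ --- and nothing you have established gives such an edge (one can check $ac',ca'\notin E$ from the avoidance relations, but $aa'$ and $cc'$ are not ruled out either way). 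So the claim that ``two of these restricted neighborhoods overlap in a way that produces a concrete $C_5$'' is not justified, and the finishing argument needs to be substantially reworked --- likely via a more detailed structural case analysis rather than a counting argument on the $A_i$'s.
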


\begin{prop}\label{claim: VCOnlyTwo}
   Let $G$ be an $n$-vertex maximal $C_{5}$-free graph with $\delta(G)>\frac{n}{5}$. Then $\textup{VC}(G)\le 2$.
\end{prop}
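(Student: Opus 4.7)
The plan is to argue by contradiction, assuming $\textup{VC}(G)\geq 3$ and extracting from the shattered triple either a copy of $C_5$ or an induced $C_6$; either conclusion is absurd, since $G$ is $C_5$-free and, by~\cref{lemma:InducedC6Free}, induced-$C_6$-free. Let $\{v_1,v_2,v_3\}$ be shattered and, for each pair $\{i,j\}\subset[3]$, fix a witness $u_{ij}\in V(G)$ with $N(u_{ij})\cap\{v_1,v_2,v_3\}=\{v_i,v_j\}$. Since $G$ has no self-loops, $u_{ij}\notin\{v_i,v_j\}$, so the only possible degeneracy is $u_{ij}=v_k$ (the remaining index), which forces $v_iv_k,v_jv_k\in E(G)$.

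In the generic case where $u_{12},u_{13},u_{23}$ are all distinct from the $v_\ell$'s, I will run two short forbidden-$C_5$ arguments. First, any edge among the witnesses, say $u_{12}u_{13}$, closes the $4$-path $u_{23}-v_2-u_{12}-u_{13}-v_3$ back to $u_{23}$ via $v_3u_{23}$, producing a $C_5$; hence the three witnesses are pairwise non-adjacent. Next, any edge among the $v_\ell$'s, say $v_1v_2$, closes $v_1-v_2-u_{23}-v_3-u_{13}-v_1$ into a $C_5$; hence $\{v_1,v_2,v_3\}$ is also independent. Combining these non-edges with the non-edges $v_\ell u_{ij}$ for $\ell\notin\{i,j\}$ given by the witness definition, the six vertices $\{v_1,v_2,v_3,u_{12},u_{13},u_{23}\}$ induce exactly the $6$-cycle
$$v_1-u_{12}-v_2-u_{23}-v_3-u_{13}-v_1,$$
contradicting~\cref{lemma:InducedC6Free}.

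The remaining task is to dispose of the degenerate cases in which some $u_{ij}=v_k$, since each such coincidence forces edges inside $\{v_1,v_2,v_3\}$ and breaks the second step above. A single coincidence $u_{12}=v_3$ already forces the edges $v_1v_3,v_2v_3$, and one can rerun the ``close a $4$-path into a $C_5$'' analysis with the remaining (still generic) witnesses $u_{13},u_{23}$ to derive a contradiction. The hardest sub-case is the fully-degenerate one, where $\{v_1,v_2,v_3\}$ forms a triangle and every $u_{ij}=v_k$. Here I plan to invoke the singleton witnesses $u_i$ with $N(u_i)\cap\{v_1,v_2,v_3\}=\{v_i\}$: the $v_\ell$-triangle rules out every coincidence $u_i=v_\ell$, and the same $C_5$-forbidding principle forces the $u_i$'s to be pairwise independent via cycles such as $u_i-v_i-v_k-v_j-u_j-u_i$. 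To finish, I would combine maximality of $G$ (which closes every non-edge into a $4$-path) with the minimum-degree hypothesis $\delta(G)>n/5$ to locate either a forbidden $C_5$ or an induced $C_6$ passing through one of the $u_i$'s.

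The main obstacle will be this triangle-degeneracy sub-case. The generic case is clean---shattering produces an induced $C_6$ the moment the diagonals are forbidden by $C_5$-freeness---but when $\{v_1,v_2,v_3\}$ is a triangle the natural $C_5$-extractions get absorbed into the triangle, and I expect the bulk of the technical effort will be the careful case analysis there, using the degree hypothesis and maximality of $G$ rather than purely VC-structural arguments.
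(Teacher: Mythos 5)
Your generic-case argument is correct and produces the same induced-$C_6$ contradiction the paper uses, but it misses the device that makes the paper's proof complete in one short paragraph: by~\cref{lemma:NoSmallOddCycles} (applied with $k=3$), the minimum-degree hypothesis forces $G$ to be triangle-free, and this kills all of your degenerate cases at once. Indeed, if some pair-witness $u_{ij}$ coincided with $v_k$ then $v_k$ would be adjacent to both $v_i$ and $v_j$; but $u_{ij}$ itself is a common neighbour of $v_i,v_j$, so any edge inside $\{v_1,v_2,v_3\}$ yields a triangle, and the coincidence cannot occur. Triangle-freeness likewise gives pairwise non-adjacency of the witnesses for free (any two of them share a common $v_\ell$), and the distinctness of all six vertices, after which the induced $C_6$ and the contradiction with~\cref{lemma:InducedC6Free} follow exactly as you describe.

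As written, your proposal has a genuine gap precisely where you flag it. The single-coincidence fix ``rerun the $C_5$-closure with $u_{13},u_{23}$'' does not close on its own: with $u_{12}=v_3$ one can form the path $u_{13}\,v_1\,v_3\,v_2\,u_{23}$, but there is no guaranteed edge $u_{13}u_{23}$ to complete a $C_5$; one needs to bring in the full trace witness $u_{123}$ (e.g.\ $v_1\,v_3\,u_{23}\,v_2\,u_{123}\,v_1$ does give a $C_5$). The fully-degenerate ``triangle'' sub-case is not resolved at all — the appeal to ``maximality plus minimum degree to locate a forbidden $C_5$'' is a plan, not a proof, and it is exactly what~\cref{lemma:NoSmallOddCycles} packages. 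I recommend you open your proof by citing that lemma to get triangle-freeness; your remaining argument then simplifies to the paper's and the entire degenerate-case analysis disappears.
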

\begin{proof}
Suppose that there exists a shattered set of size $3$, say $X:=\{x_{1},x_{2},x_{3}\}$. Let $y_{1},y_{2},y_{3}$ be the vertices satisfying that $y_{1}\in (N(x_{1})\cap N(x_{2}))\setminus N(x_{3})$, $y_{2}\in (N(x_{2})\cap N(x_{3}))\setminus N(x_{1})$ and $y_{3}\in (N(x_{3})\cap N(x_{1}))\setminus N(x_{2})$. By~\cref{lemma:NoSmallOddCycles}, $G$ is triangle-free, thus $\{x_1,x_2,x_3\}$ and $\{y_1,y_2,y_3\}$ are both independent set. Therefore $x_{1}y_{1}x_{2}y_{2}x_{3}y_{3}$ forms an induced $6$-cycle, contradicting~\cref{lemma:InducedC6Free}.
\end{proof}

\begin{proof}[Proof of~\cref{thm:C5}]
Take $s=\frac{\varepsilon n}{50}$, by~\cref{lemma:Partition}, $V(G)$ can be partitioned into $V_{1}\sqcup\cdots\sqcup V_{r}$ with $r\le 3e(\frac{2e n}{s})^{2}\le (\frac{999}{\varepsilon})^{2}$, such that any pair of vertices in the same part has at least $(\frac{1}{5}+\varepsilon)n-2s$ common neighbors. Moreover by~\cref{lemma:NoSmallOddCycles}, $G$ is triangle-free, therefore $V_{i}$ is an independent set for each $i\in [r]$. We then refine the above partition as follows. For each $i\in [r]$, we partition $V_{i}$ into at most $m=2^{r}$ parts, say $V_{i}:=V_{i}^{1}\sqcup\cdots\sqcup V_{i}^{m}$ such that for any $j\in [m]$ and $\ell\in [r]$, any pair of vertices $x,y\in V_{i}^{j}$ satisfies $N(x)\cap V_{\ell}=\emptyset$ if and only if $N(y)\cap V_{\ell}=\emptyset$.

It suffices to show the following claim.
\begin{claim}\label{claim:C5CompleteOrEmpty}
    For distinct $i_{1},i_{2}\in [r]$ and $j_{1},j_{2}\in [m]$, $V_{i_{1}}^{j_{1}}$ and $V_{i_{2}}^{j_{2}}$ are either complete, or anti-complete.
\end{claim}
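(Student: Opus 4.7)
We argue by contradiction: assume there exist distinct $i_1, i_2 \in [r]$ and $j_1, j_2 \in [m]$ such that $V_{i_1}^{j_1}$ and $V_{i_2}^{j_2}$ are neither complete nor anti-complete. Both parts are independent sets (as subsets of $V_{i_1}, V_{i_2}$, respectively), and the bipartite graph between them contains both an edge and a non-edge. A short case check on which endpoints are shared shows that, up to swapping the roles of $V_{i_1}^{j_1}$ and $V_{i_2}^{j_2}$, we may pick $x \in V_{i_1}^{j_1}$ and $y, y' \in V_{i_2}^{j_2}$ with $xy \in E(G)$, $xy' \notin E(G)$, and $yy' \notin E(G)$ (the last using independence of $V_{i_2}$).

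The main engine is the maximality of $G$ as a $C_5$-free graph. Applied to the non-edge $xy'$, it furnishes a path $P = x - a - b - c - y'$ of length four in $G$ on five distinct vertices; triangle-freeness of $G$ immediately forces the non-edges $xb, ac, by' \notin E(G)$. Now consider the candidate $5$-cycle $x - a - b - c - y - x$: its four edges $xa, ab, bc, yx$ already lie in $G$, and every potential chord would close a triangle with two of these edges, so triangle-freeness rules out all chords. Hence, if $cy \in E(G)$, the cycle is an induced $C_5$ in $G$, contradicting $C_5$-freeness. We conclude $cy \notin E(G)$, so $c$ lies in the set $N(y') \setminus N(y)$, which by the partition lemma applied to $V_{i_2}$ has size at most $2s = \varepsilon n / 25$.

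The final step transfers the path so that it ends at $y$. Let $V_\ell$ be the part of the coarse partition containing $c$. Since $y$ and $y'$ share the same signature (both lie in $V_{i_2}^{j_2}$), the nonemptiness of $N(y') \cap V_\ell \ni c$ gives $N(y) \cap V_\ell \neq \emptyset$. Moreover, by the partition lemma applied inside $V_\ell$, we have $|N(c) \triangle N(c^*)| \leq 2s$ for every $c^* \in V_\ell$, so all but at most $2s$ of them satisfy $b c^* \in E(G)$. Whenever $|N(y) \cap V_\ell|$ exceeds $2s$, we can therefore pick $c^* \in N(y) \cap V_\ell$ with $bc^* \in E(G)$ and $c^* \notin \{x, a, b\}$; then $x - a - b - c^* - y - x$ is an induced $C_5$ in $G$, the desired contradiction.

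The main obstacle is the degenerate regime where $|N(y) \cap V_\ell| \leq 2s$ for the specific $V_\ell$ containing $c$. We plan to handle this by iterating the argument with the other possible ``last'' vertices $c$ of a $P_4$ from $x$ to $y'$ (each lies in the size-at-most-$2s$ set $N(y') \setminus N(y)$, and maximality allows us to pick an alternative $P_4$ ending at each such $c$), and for the residual configurations, by invoking the induced $C_6$-freeness of \cref{lemma:InducedC6Free} applied to a six-cycle built on a vertex set of the form $\{y, x, a, b, c, w\}$ with $w \in N(c) \cap N(y)$ supplied by the high common-neighbor count between $y$ and $y'$. In every case one obtains either an induced $C_5$ or an induced $C_6$ in $G$, both forbidden, which contradicts the original assumption and closes the proof.
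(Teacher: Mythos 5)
Your setup is reasonable and genuinely different from the paper's: the paper takes two vertices $u_1,u_2$ in one part with $u_1v\in E$, $u_2v\notin E$, finds a $P_5$ from $u_2$ to $v$ via maximality, and then applies a pigeonhole on the minimum degree to extract a pair among the five path vertices with $\gtrsim \varepsilon n$ common neighbors; the six resulting cases are each closed by producing a $C_3$ or $C_5$. You instead put the two vertices $y,y'$ on the other side and try to ``transfer'' the endpoint of the maximality path from $y'$ to $y$ using the partition structure. Unfortunately, two central steps of your transfer do not hold up.

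First, the sentence ``by the partition lemma applied inside $V_\ell$, we have $|N(c)\triangle N(c^*)|\le 2s$ for every $c^*\in V_\ell$, so all but at most $2s$ of them satisfy $bc^*\in E(G)$'' is a non sequitur. The partition lemma only controls the symmetric difference of the neighborhoods of a \emph{pair} of vertices in the same coarse class; it says nothing about how many $c^*\in V_\ell$ lie in $N(b)$ for a fixed vertex $b$. It is entirely consistent with $|N(c)\triangle N(c^*)|\le 2s$ for all $c^*\in V_\ell$ that $c$ is the \emph{only} vertex of $V_\ell$ adjacent to $b$ (each $c^*$ could differ from $c$ precisely by dropping $b$ and adding one other vertex). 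So the conclusion ``whenever $|N(y)\cap V_\ell|$ exceeds $2s$, we can pick $c^*\in N(y)\cap V_\ell$ with $bc^*\in E(G)$'' is unjustified.

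Second, your fallback for the degenerate regime also breaks. You propose to take $w\in N(c)\cap N(y)$ ``supplied by the high common-neighbor count between $y$ and $y'$.'' But $cy'\in E(G)$ and $G$ is triangle-free, hence $N(c)\cap N(y')=\emptyset$; combined with $|N(y)\triangle N(y')|\le 2s$ this gives $|N(c)\cap N(y)|\le 2s$, which may well be zero. There is no large supply of such $w$, so the induced-$C_6$ route via \cref{lemma:InducedC6Free} is not available in the way you describe. (You also implicitly treat $y$ as distinct from $a,b,c$, which needs a separate check since $y$, like $a$, is a neighbor of $x$.) The paper sidesteps all of this by using $\delta(G)\ge(\tfrac15+\varepsilon)n$ to pigeonhole a large common neighborhood between two of $\{v,u_2,w_1,w_2,w_3\}$, and that pigeonhole is exactly the ingredient your argument is missing; without it, you cannot guarantee the existence of the bridging vertex you need.

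Minor remark: you do not need the chord analysis at all. Since $G$ is forbidden from containing $C_5$ as a \emph{subgraph}, any $5$-cycle $x\!-\!a\!-\!b\!-\!c\!-\!y\!-\!x$ (with five distinct vertices) is already a contradiction regardless of chords; the issue is only whether such a cycle exists with distinct vertices.
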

\begin{poc}
    Suppose that there are vertices $u_{1},u_{2}\in V_{i_{1}}^{j_{1}}$ and $v\in V_{i_{2}}^{j_{2}}$ such that $u_{1}v\in E(G)$ and $u_{2}v\notin E(G)$. Since $G$ is maximal $C_{5}$-free, there is a path of length $4$ between $u_{2}$ and $v$, say $u_{2}w_{1}w_{2}w_{3}v$, where $w_{p}\in V_{a_{p}}^{b_{p}}$ for $p\in [3]$, $a_{p}\in [r]$ and $b_{p}\in [m]$.
    
    As $\delta(G)\ge(\frac{1}{5}+\varepsilon)n$, there is a pair of vertices in $\{v,u_{2},w_{1},w_{2},w_{3}\}$ having at least $\frac{\varepsilon n}{10}$ common neighbors. This pair cannot be adjacent since $G$ is triangle-free and hence it must be one of non-adjacent pairs $(v,u_2),(v,w_{1}),(v,w_{2}),(u_{2},w_{3}),(u_{2},w_{2})$, $(w_{1},w_{3})$. We shall derive a contradiction in each case. Note first that as each $V_i$ is an independent set, $a_{1}\neq a_{2}$, $a_{2}\neq a_{3}$, $a_{3}\neq i_{2}$, $i_{1}\neq i_{2}$ and $i_{1}\neq a_{1}$.
    \begin{itemize}
      \item $(v,u_2)$: Since $u_1,u_2\in V_{i_1}$, $|N(u_1)\cap N(v)\cap N(u_{2})|\ge \frac{\varepsilon n}{10}-2s>0$. One such common neighbor forms a triangle with $u_1$ and $v$, a contradiction.
        \item $(v,w_{1})$: Then we can pick a vertex $h_{1}\in N(v)\cap N(w_{1})$ so that $vh_{1}w_{1}w_{2}w_{3}$ is a copy of $C_{5}$, a contradiction. We have a similar contradiction for $(u_2,w_3)$.
        \item $(v,w_{2})$: Since $u_{1}\in V_{i_{1}}^{j_{1}}$ is adjacent to $v\in V_{i_{2}}^{j_{2}}$ and $i_1\neq i_2$, by the construction of the partition $u_{2}$ has a neighbor $z_{1}\in V_{i_{2}}$. 
        \begin{itemize}
            \item If $z_1\neq w_1,w_2$, then as $|N(z_{1})\cap N(v)\cap N(w_{2})|\ge \frac{\varepsilon n}{10}-2s>0$, we can pick a vertex $h_{2}\in N(z_{1})\cap N(v)\cap N(w_{2})$ so that $u_{2}z_{1}h_{2}w_{2}w_{1}$ is a copy of $C_{5}$, a contradiction.
            \item If $z_1=w_2$, then $u_2w_1w_2$ is a triangle, a contradiction.
            \item If $z_1=w_1$, then $v,w_1\in V_{a_1}$ and they have at least $(\frac{1}{5}+\varepsilon)n-2s$ common neighbors, from which we can pick one, say $x$ such that $xw_1w_2w_3v$ in a copy of $C_5$, a contradiction.
        \end{itemize}
        \item $(u_{2},w_{2})$: Since $u_{1},u_{2}\in V_{i_{1}}$, $|N(u_{1})\cap N(u_{2})\cap N(w_{2})|\ge \frac{\varepsilon n}{10}-2s>0$, then we can pick a vertex $h_{4}\in N(u_{1})\cap N(u_{2})\cap N(w_{2})$ so that $vu_{1}h_{4}w_{2}w_{3}$ is a copy of $C_{5}$, a contradiction.
        \item $(w_{1},w_{3})$: Since $u_{2}\in V_{i_1}$ is adjacent to $w_{1}\in V_{a_1}$ and $i_1\neq a_1$, $u_{1}$ has a neighbor $z_{2}\in V_{a_{1}}$. Note that $z_2\neq w_1$ for otherwise $u_1w_1w_2w_3v$ is a $C_5$. 
        \begin{itemize}
            \item If $z_2\neq v,w_3$, then as $|N(z_{2})\cap N(w_{1})\cap N(w_{3})|\ge\frac{\varepsilon n}{10}-2s>0$,  there is a vertex $h_{5}\in N(z_{2})\cap N(w_{1})\cap N(w_{3})$ so that $w_{3}vu_{1}z_{2}h_{5}$ is a copy of $C_{5}$, a contradiction.
            \item If $z_2=w_3$, then $u_1w_3v$ is a triangle, a contradiction.
            \item If $z_2=v$, then $v,w_1\in V_{a_1}$. As above we can pick one of their common neighbor $x$ such that $xw_1w_2w_3v$ in a copy of $C_5$, a contradiction.
        \end{itemize}
    \end{itemize}

This completes the proof of the claim.
\end{poc}
By~\cref{claim:C5CompleteOrEmpty}, $G$ is a blowup of a graph on at most $r\cdot 2^{r}\le 2^{(\frac{1000}{\varepsilon})^{2}}$ vertices.
\end{proof}

\subsection{Dense graphs without short odd cycles have bounded VC-dimension}
In this part, we will prove~\cref{lemma:Bounded VC-dimension}, which states that the VC-dimension of any maximal $C_{2k-1}$-free graph with large minimum degree can be bounded by a constant only depending on $k$. We will need the well-known Bollob\'{a}s set-pair inequality~\cite{1965BollobasSetpair}. 

\begin{theorem}[\cite{1965BollobasSetpair}]\label{thm: bollobas}
Suppose that $A_1, A_2,\ldots, A_m$ are sets of size $k$ and $B_1, B_2,\ldots, B_m$ are sets of size $\ell$ satisfying 
$A_i\cap B_i = \emptyset$ for each $i\in [m]$ and $A_i\cap B_j \neq \emptyset$ for each $i\neq j$.
Then $m\le \binom{k+\ell}{k}$.
\end{theorem}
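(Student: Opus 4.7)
The plan is to prove the Bollob\'as set-pair inequality via the classical random-permutation argument. I would first pass to the ground set $V := \bigcup_{i=1}^{m}(A_i \cup B_i)$ and sample a uniformly random linear ordering $\pi$ of $V$. For each index $i \in [m]$, let $E_i$ denote the event that every element of $A_i$ appears before every element of $B_i$ in $\pi$. The proof then reduces to two ingredients: (i) computing $\Pr[E_i]$, and (ii) showing that the events $E_1,\ldots,E_m$ are pairwise disjoint.

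For (i), since $A_i \cap B_i = \emptyset$ and $|A_i \cup B_i| = k + \ell$, the restriction of $\pi$ to $A_i \cup B_i$ is a uniformly random ordering of these $k+\ell$ distinct elements, and $E_i$ is exactly the event that the first $k$ of those positions are occupied by $A_i$. This happens with probability $\binom{k+\ell}{k}^{-1}$, independently of what $\pi$ does on the rest of $V$.

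For (ii), suppose toward contradiction that both $E_i$ and $E_j$ occur for some $i \neq j$. By hypothesis one can choose $x \in A_i \cap B_j$ and $y \in A_j \cap B_i$, both nonempty. Event $E_i$ forces $x \in A_i$ to precede every element of $B_i$, so in particular $x$ precedes $y \in B_i$. Symmetrically $E_j$ forces $y$ to precede $x$, a contradiction. Hence the events are pairwise disjoint, so $\sum_{i=1}^{m}\Pr[E_i] \le 1$, which rearranges to $m \cdot \binom{k+\ell}{k}^{-1} \le 1$, i.e., $m \le \binom{k+\ell}{k}$.

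The whole argument is short and its only delicate point is the initial choice of event $E_i$; once that is identified, both the probability calculation and the disjointness check are immediate from the hypotheses $A_i \cap B_i = \emptyset$ and $A_i \cap B_j \neq \emptyset$ for $i \neq j$. There is no serious obstacle; an alternative proof via a shifting/compression argument or via entropy is available, but the probabilistic route sketched above is the cleanest and gives the sharp bound directly.
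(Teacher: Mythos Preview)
Your argument is correct and is the standard random-permutation proof of the Bollob\'as set-pair inequality. Note, however, that the paper does not supply its own proof of this statement: Theorem~\ref{thm: bollobas} is quoted from~\cite{1965BollobasSetpair} as a black-box tool and is only applied inside the proof of Lemma~\ref{lemma:Bounded VC-dimension}. So there is nothing to compare against; your proposal simply fills in a proof the paper deliberately omits.
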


\begin{proof}[Proof of Lemma~\ref{lemma:Bounded VC-dimension}]
    Suppose that there is an $n$-vertex maximal $C_{2k-1}$-free graph $G$ with $\delta(G)\ge (\frac{1}{2k-1}+\eps)n$ and VC-dimension at least $C_0$, where $C_0>(2k-1)^3\binom{2k-2}{k-1} 2^{(2k-1)^{2k-2}}$.
    Let $S_0=\{s_1,\ldots, s_{C_0} \}\subseteq V(G)$ be a shattered set. Then there is a set of distinct vertices $T_0=\{v_1,\ldots,v_{C_0}\}$ such that for each $i\in [C_0]$, $N(v_i)\cap S = S\setminus \{s_i\}$. As $G$ is $C_{2k-1}$-free and hence also $K_{2k-1}$-free, we have $|S_0\cap T_0|\le 2k-2$.
    Let $C=\big((2k-1)^2\binom{2k-2}{k-1} +1\big)2^{(2k-1)^{2k-2}}$. As $C+2k-2< C_0$, by relabeling, we may assume that $S=\{s_1,\ldots, s_{C} \}$ is disjoint from $T=\{v_1,\ldots, v_{C} \}$.

    Since $G$ is maximal $C_{2k-1}$-free, for each pair of non-adjacent vertices, there exists a path of length $2k-2$ connecting them. Thus, for each $i\in [C]$, there exists a path $P_i = p_i^0 p_i^1 \cdots p_i^{2k-2}$ of length $2k-2$ connecting $s_i$ and $v_i$ in $G$, where $s_i = p_i^0$ and $v_i = p_i^{2k-2}$. We first upper bound the number of intersecting pairs of paths via~\cref{thm: bollobas}.

    \begin{claim}\label{claim: small intersecting paths}
        For each $i\in [C]$, we have $\big|\{j\in [C]: V(P_j)\cap V(P_i)\neq \emptyset \}\big|
        \le
        (2k-1)^2 \binom{2k-2}{k-1}.$
    \end{claim}

    \begin{poc}
        By symmetry, it suffices to prove the case of $i=C$. Suppose that the statement is false. For each $j\in [C-1]$, if $V(P_C)\cap V(P_j)\neq \emptyset$, let $\ell_{j}$ be the largest index in $[0,2k-2]$ such that $p_j^{\ell_{j}}\in V(P_C)$.
        By pigeonhole principle, we can find at least $L = \bigg\lceil\frac{(2k-1)^2 \binom{2k-2}{k-1}+1}{(2k-1)^2}\bigg\rceil = \binom{2k-2}{k-1}+1$ indices $1\le k_1< \cdots < k_L\le C-1$, such that $\ell_{k_1}  = \cdots = \ell_{k_L}=:\ell$ and $p_{k_1}^{\ell_{k_1}}  = \cdots
        = p_{k_L}^{\ell_{k_L}}$. For simplicity, we assume that $k_j = j$ for each $j\in [L]$. For each $j\in [L]$, we define $V_j^1$ and $V_j^2$ to be $V_j^1 = \{p_j^0, p_j^1,  \ldots, p_j^{\ell-1}\}$ and $V_j^2 = \{p_j^{\ell+1}, p_j^{\ell+2},  \ldots, p_j^{2k-2}\}$ respectively. Observe that, for each $j\in [L]$, we have $|V_j^1| = \ell$, $|V_j^2| = 2k-2-\ell$ and $V_j^1\cap V_j^2 = \emptyset$.
        
        Now we consider these $L$ set pairs $\{(V_j^1, V_j^2)\}_{j\in [L]}$,
        since $L> \binom{2k-2}{k-1} \ge \binom{2k-2}{\ell}$, by~\cref{thm: bollobas}, there exist distinct indices $j_1$ and $j_2$ in $[L]$ such that $V_{j_1}^1 \cap V_{j_2}^2 = \emptyset$.
        Without loss of generality, we may assume $j_1 = 1$ and $j_2 = 2$.
        Then we can find a cycle $p_1^0 p_1^1\cdots p_1^{\ell} p_2^{\ell+1} p_2^{\ell+2}\cdots p_2^{2k-2} p_1^0$ of length $2k-1$ since $p_1^0=s_1$ is adjacent to $p_2^{2k-2}=v_2$, a contradiction.
    \end{poc}

    By \Cref{claim: small intersecting paths}, we can greedily find a subset, say without loss of generality $[C']\subseteq [C]$, where $C' = \frac{C}{(2k-1)^2 \binom{2k-2}{k-1}+1}=2^{(2k-1)^{2k-2}}$, such that for any distinct indices $i,j\in [C']$, we have $V(P_i)\cap V(P_j) = \emptyset$. We will select a subcollection of these paths using the following claim.
    
    \begin{claim}\label{claim:large set of common neighbors}
        For any subset $U\subseteq V(G)$ with $|U|=K > (2k-1)^{2k-1}$, there is a subset $U'\subseteq U$ with $|U'|\ge K^{\frac{1}{2k-1}}$ such that any pair of vertices in $U'$ have at least $5k$ common neighbors.
    \end{claim}

    \begin{poc}
        We build an auxiliary graph $H$ with vertex set $U$ such that two vertices in $H$ are adjacent if they share at least $5k$ common neighbors in $G$.
        Since $n$ is sufficiently large, we have
        \begin{equation*}
            (2k-1)\bigg(\frac{1}{2k-1}+\eps\bigg)n - 5k \binom{2k-1}{2} > n,
        \end{equation*}
        which implies that, for any $2k-1$ distinct vertices in $G$, there are two of them sharing at least $5k$ common neighbors.
        Therefore, the independence number $\alpha(H)<2k-1$. Then the upper bound for Ramsey number $R(s,t)\le \binom{s+t-2}{s-1}$~\cite{1935ES} yields that
         $H$ contains a clique of size at least $K^{\frac{1}{2k-1}}$, which can be chosen as the subset $U'$ in the claim. This finishes the proof.
    \end{poc}

    As $C{'}^{\frac{1}{(2k-1)^{2k-2}}} \ge 2$, we can repeatedly apply~\cref{claim:large set of common neighbors} to obtain two paths among $\{P_i:i\in[C']\}$, say $P_1,P_2$, such that $p_1^\ell$ and $p_2^\ell$ share at least $5k$ common neighbors in $G$ for each $\ell\in \{1,\ldots,2k-2\}$. Note also that $p_1^0=s_1$ and $p_2^0=s_2$ are in $S$, and thus they share at least $2^{C-2} \ge 5k$ common neighbors. Denote $V_0 = V(P_1)\cup V(P_2)$. Note that $G[V_0]$ contains a spanning cycle of length $4k-2$; we denote this cycle by $C^*=u_1u_2\cdots u_{4k-2}u_1$, where $u_i = p_1^{i-1}$ for any $i\in [2k-1]$ and $u_{j+2k-1} = p_2^{j-1}$ for each $j\in [2k-1]$.
    By the argument above, we know that for each $i\in [2k-1]$, $u_i$ and $u_{i+2k-1}$ share at least $5k$ common neighbors. 

    \begin{claim}\label{claim:at most two neighbors}
        There exists a vertex $y\in V(G)\setminus V_0$ such that $|N(y)\cap V_0|\ge 3$.
        \end{claim}
\begin{poc}
    Suppose that for every vertex $y\in V(G)\setminus V_0$, $|N(y)\cap V_0|\le 2$.
    We count $e(V_0,V\setminus V_0)$, the number of edges in $G$ between $V_0$ and $V(G)\setminus V_0$. On one hand, by assumption,
    $$ e(V_0, V(G)\setminus V_0) \le 
    \sum_{y\in V\setminus V_0}|N(y)\cap V_0| \le 2|V(G)\setminus V_0| \le 2n. $$  
  On the other hand, by the minimum degree condition,
\begin{equation*}
    e(V_0, V(G)\setminus V_0) \ge \sum_{j = 1}^{4k-2} \big(|N(u_{j})|-|V_0|\big)
    \ge \sum_{j = 1}^{4k-2}\bigg( \bigg(\frac{1}{2k-1}+\eps\bigg)n -(4k-2) \bigg)
    > 2n,   
\end{equation*}
a contradiction.
\end{poc}  

Let $y\in V(G)\setminus V_0$ be a vertex with $|N(y)\cap V_0|\ge 3$ given by the above claim. We shall make use of the cycle $C^*=u_1u_2\cdots u_{4k-2}u_1$ and edges from $y$ to $V_0$ to obtain a copy of $C_{2k-1}$ or $C_{2k-3}$, which would contradict~\cref{lemma:NoSmallOddCycles}. Recall that every long diagonal of $C^*$, i.e.,~$(u_i,u_{i+2k-1})$, $i\in[2k-1]$, share at least $5k$ common neighbors. Denote the subset consisting of vertices with odd indices in $V_0$ by $V_0^{odd}$, and the remaining subset by $V_0^{even}$. The indices are taken modulo $4k-2$.

    \begin{claim}\label{claim:only one even}
        For any $i\in[2k-1]$, if $u_{2i-1}\in N(y)$, then $N(y)\cap V_0^{even} \subseteq \{ u_{2k+2i-2} \}$.
    \end{claim}

    \begin{poc}
    By the symmetry of $C^*$, we may assume $i=1$ and $u_1\in N(y)$. Suppose that $u_{2p}\in N(y)$ for some $p\neq k$, again by symmetry of $C^*$, we may assume that $p\in [k-1]$. 
    
    If $p\in \{k-2,k-1\}$, then $yu_1u_2\cdots u_{2p}y$ forms a copy of cycle of length $2p+1\in \{2k-3, 2k-1\}$, a contradiction.
        
        If instead $1\le p\le k-3$, recall that both $\{u_1, u_{2k}\}$ and $\{u_{k+p-3}, u_{3k+p-4}\}$  share at least $5k$ common neighbors. We can pick distinct vertices $x\in N(u_1)\cap N(u_{2k})$ and $x'\in N(u_{k+p-3})\cap N(u_{3k+p-4})$ such that $\{x, x'\}\cap (\{ y \}\cup V_0) = \emptyset$.
        Then $y u_1 x u_{2k} u_{2k+1} \cdots u_{3k+p-4} x' u_{k+p-3} u_{k+p-4} \cdots u_{2p} y$ forms a copy of cycle of length $2k-1$, a contradiction. 
    \end{poc}

    \begin{claim}\label{claim:only two odd}
    $N(y)\cap V_0^{odd}\subseteq \{u_{2i-1},u_{2i+1}\}$ for some $i\in[2k-1]$.
    \end{claim}
    \begin{poc}
    We may assume $N(y)\cap V_0^{odd}\neq\varnothing$ and by the symmetry of $C^*$, we may further assume $u_1\in N(y)$. It suffices to prove that  $Z_1=N(y)\cap V_0^{odd}\setminus\{u_{1}\}\subsetneq\{ u_3, u_{4k-3} \}$. 
    
    Suppose $N(y)$ contains $u_{2p+1}$ for $p\neq 1,2k-2$. Again by symmetry of $C^*$, we may assume $2\le p\le k-1$. Since $u_{p-1}$ and $u_{p+2k-2}$ share at least $5k$ common neighbors, we can take a common neighbor $x$ of $u_{p-1}$ and $u_{p+2k-2}$ such that $x\not\in \{y\}\cup V_0$. Then $ y u_1u_2 \cdots u_{p-1} x u_{p+2k-2} u_{p+2k-3} \cdots u_{2p+1}y$ forms a cycle of length $2k-1$, a contradiction to~\cref{lemma:NoSmallOddCycles}. 
    
    Thus $Z_1\subseteq \{u_3,u_{4k-3}\}$. Now suppose without loss of generality that $u_3\in Z_1\subseteq N(y)$, by the above argument, we must have $N(y)\cap V_0^{odd}\setminus\{u_{3}\}\subseteq \{u_1,u_5\}$. Consequently, as $k\ge 3$, we have $N(y)\cap V_0^{odd}=\{u_1,u_3\}$ as claimed. 
    \end{poc}

   By symmetry of $C^*$, we may assume without loss of generality that $N(y)\cap V_0^{odd}\neq\varnothing$ and $u_1\in N(y)$. Now we must have $(N(y)\cap V_0^{odd})\setminus \{u_1\}\neq\varnothing$ for otherwise $|N(y)\cap V_0|\le 2$ by~\cref{claim:only one even}, contradicting~\cref{claim:at most two neighbors}. Then by~\cref{claim:only two odd}, we may assume $N(y)\cap V_0^{odd}=\{u_1,u_3\}$ and therefore by~\cref{claim:only one even}, $N(y)\cap V_0^{even}=\varnothing$, contradicting~\cref{claim:at most two neighbors} again. 
   
   This completes the proof of~\cref{lemma:Bounded VC-dimension}. 
\end{proof}

\subsection{Main lemma and its consequences}
We first describe how to partition a dense graph with no short odd cycles based on~\cref{lemma:Bounded VC-dimension} and~\cref{lemma:Partition}, along with some auxiliary results. A \emph{walk} of length $k$ is a sequence of vertices $v_{1}v_{2}\ldots v_{k+1}$ with $v_{i}v_{i+1}\in E(G)$ for $1\le i\le k$. A walk is said to be \emph{closed} if the first and last vertices are the same. 
The following simple fact about closed walks will be useful in our proof.

\begin{fact}\label{claim:odd walk to odd cycle}     
    If a closed walk $W$ of odd length is not an odd cycle, then it contains an odd cycle $C$ such that at least one vertex in $C$ appears at least twice in $W$.\footnote{We consider the first and last vertex of a closed walk appear only once in the walk.} Moreover $3\leq |C|\leq |W|-2$.
\end{fact}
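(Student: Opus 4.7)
The plan is induction on the odd length $k := |W|$. In a simple graph every closed walk of length $3$ is a triangle, so the hypothesis ``not an odd cycle'' is vacuous for $k = 3$, and the smallest genuine case is $k = 5$, which the splitting argument below resolves in one step. For the inductive step I would write $W = v_1 v_2 \cdots v_k v_1$ and use the failure of $W$ to be a cycle to locate indices $1 \leq i < j \leq k$ with $v_i = v_j$.

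This repeat decomposes $W$ into two closed sub-walks $W_1 = v_i v_{i+1} \cdots v_j$ of length $j - i$ and $W_2 = v_j v_{j+1} \cdots v_k v_1 \cdots v_i$ of length $k - (j - i)$. Since $k$ is odd, exactly one of them, call it $W'$, has odd length; the absence of loops forces each sub-length to be at least $2$, so $3 \leq |W'| \leq k - 2$. If $W'$ is itself an odd cycle, set $C := W'$ and note that its common endpoint $v_i = v_j$ occupies the two distinct positions $i$ and $j$ in $W$, so it appears at least twice in $W$ under the footnote convention. Otherwise, applying the induction hypothesis to $W'$ yields an odd cycle $C$ with $|C| \leq |W'| - 2 \leq k - 4$ and a vertex $u \in V(C)$ appearing at least twice in $W'$.

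The main obstacle, modest but requiring care, is translating ``appears at least twice in $W'$'' back to ``appears at least twice in $W$'' under the endpoint-counted-once convention. I would handle two cases: if $u$ is the common endpoint $v_i = v_j$ of $W'$ then it already occupies the two distinct positions $i, j$ of $W$; otherwise $u$ has at least two distinct internal occurrences in $W'$, which remain two distinct occurrences in $W$. This closes the induction and delivers both the odd cycle $C$ and the required length bound $|C| \leq |W| - 2$.
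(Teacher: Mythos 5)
Your argument is correct: the split at a repeated vertex $v_i=v_j$ into two closed sub-walks whose lengths are at least $2$ and sum to the odd number $|W|$, followed by recursion on the odd-length piece, is the standard proof of this fact, and you handle the only delicate point (transferring ``appears at least twice'' back to $W$ under the endpoint-counted-once convention) correctly in both cases. The paper states this as a Fact without proof, so there is no authorial argument to compare against; your write-up fills that gap faithfully and also yields $E(C)\subseteq E(W)$, which is how the fact is actually used later in the paper.
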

We next describe our partitions in details.
\begin{definition}\label{def:PartitionRules}
   Let $G$ be an $n$-vertex graph with VC-dimension at most $d$. Let $0<\gamma<1$ be a real number and $i\ge 0$ be an integer, we define the \emph{$\gamma$-difference $i$-th partition of $G$}, denoted by $\mathscr{P}_i(G,\gamma)$, as follows.

    \begin{itemize}
        \item By~\cref{lemma:Partition}, $V(G)$ can be partitioned into $V_{1}\sqcup V_{2}\sqcup\cdots\sqcup V_{r}$ with $r\le e(d+1)(\frac{4e}{\gamma})^{d}$ such that for any $u,v$ in the same part, $|N(u)\triangle N(v)|\le \gamma n$. We denote this partition by $\mathscr{P}_0(G,\gamma)$.

        \item For $i\ge0$, we refine $\mathscr{P}_i(G,\gamma)$ to derive $\mathscr{P}_{i+1}(G,\gamma)$ as follows. Partition each  class of $\mathscr{P}_i(G,\gamma)$ into at most $2^{|\mathscr{P}_i(G,\gamma)|}$ new classes such that  for any $u$ and $v$ in the same class of $\mathscr{P}_i(G,\gamma)$, they are in the same class of $\mathscr{P}_{i+1}(G,\gamma)$ if and only if for any class $P$ of $\mathscr{P}_i(G,\gamma)$, we have
$$N(u)\cap P=\emptyset\quad \Leftrightarrow \quad  N(v)\cap P=\emptyset.$$
\end{itemize}

\begin{definition}\label{defn:aux-H}
For each $i\ge 0$, we define an auxiliary graph $H_i(G,\gamma)$, in which each vertex corresponds to a class of $\mathscr{P}_{i}(G,\gamma)$. Two vertices $X$ and $Y$ are adjacent in $H_i(G,\gamma)$ if and only if $(X,Y)$ induces an edge in $G$.
\end{definition}

\end{definition}
For brevity, we sometimes omit $G$ and $\gamma$, simplifying it to $\mathscr{P}_i$, when it is clear from the content. Let $i\le j$ be integers, note that $\mathscr{P}_{j}(G,\gamma)$ is a refinement of $\mathscr{P}_{i}(G,\gamma)$. For a class $P$ of $\mathscr{P}_{j}$, let $P^{(i)}$ be the unique class of $\mathscr{P}_{i}$ that contains $P$. Furthermore, for a vertex $v\in V(G)$, let $v^{(i)}$ be the unique class of $\mathscr{P}_{i}$ that contains $v$. For $i\ge 0$, we say a class of $\mathscr{P}_{i}$ is \emph{singular} if it contains exactly one vertex.

\begin{definition}[Singular cycles]
    For $i\ge 1$, a cycle in $H_i(G,\gamma)$ (resp. $G$) is \emph{singular} if for any class $P\in V(H_{i}(G,\gamma))$ (resp. any vertex $v\in V(G)$) in this cycle, $P^{(1)}$ (resp. $v^{(1)}$) is singular, and \emph{non-singular} otherwise. 
\end{definition}

Based on the above definitions, we now introduce the following main lemma, and show how it  yields~\cref{thm:ByproductOne} and the upper bound of~\cref{thm:SharpThreshold}.

\begin{lemma}\label{lemma:main}
For any integer $k\ge 2$ and any $\varepsilon>0$, for an $n$-vertex graph $G$ with VC-dimension at most $d$.
If one of the followings holds, 
    \begin{enumerate}
        \item[\textup{(1)}] $G$ is $\{C_{2k-1},C_{2k-3}\}$-free,
\footnote{Here $\{C_{3},C_{1}\}$-free just means $\{C_{3}\}$-free.} and $\delta(G)\ge(\frac{1}{2k-1}+ 2k\varepsilon )n$;
        \item[\textup{(2)}] $G$ is $\{C_{2k+1},C_{2k-1}\}$-free, and $\delta(G)\ge \varepsilon n$,
    \end{enumerate} 
then $G$ is non-singular $C_{2\ell-1}$-free for $2\le\ell\le k-1$, and $G \xrightarrow{\textup{hom}} H_{k}(G,\frac{\varepsilon}{3})$, where $H_{k}(G,\frac{\varepsilon}{3})$, as defined in~\cref{defn:aux-H}, is $C_{2k-1}$-free and non-singular $C_{2\ell-1}$-free for any $2\leq \ell\le k-1$.
\end{lemma}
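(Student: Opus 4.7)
The plan is to prove the lemma by iteratively analyzing the partitions $\mathscr{P}_0,\mathscr{P}_1,\ldots,\mathscr{P}_k$ and the auxiliary graphs $H_0,H_1,\ldots,H_k$ from \cref{def:PartitionRules}, with $\mathscr{P}_0$ instantiated as the Haussler-type partition of \cref{lemma:Partition} with tolerance $\gamma := \varepsilon/3$. First, as a preprocessing step, I would apply \cref{lemma:NoSmallOddCycles} to strengthen the hypothesis: in case~(1) this yields $C_\ell$-freeness of $G$ for every odd $\ell \in [k,2k-1]$ (and, together with the extra $C_{2k-3}$-freeness assumption, pushes down to odd $\ell\in[k-1,2k-1]$ whenever the minimum-degree slack $2k\varepsilon$ allows); in case~(2) an analogous application with $C_{2k+1}$ yields $C_\ell$-freeness for every odd $\ell \in [k+1,2k+1]$, together with the given $C_{2k-1}$-freeness.

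The core technical tool will be a \emph{lifting principle}: any cycle $P_1 P_2 \cdots P_m P_1$ in the auxiliary graph $H_i$ can be realized as a closed walk $w_1 w_2 \cdots w_m w_1$ of the same length in $G$ whose vertices lie inside the corresponding classes at level $i-1$. To build such a walk, I would pick for each edge $P_j P_{j+1}$ an edge $u_j v_{j+1}$ in $G$ realizing that adjacency and then exploit the defining refinement rule: any two vertices in the same class of $\mathscr{P}_i$ share their level-$(i-1)$ neighborhood profile, so I may swap $u_j$ for any other vertex in $P_j$ while preserving the existence of a neighbor in the $\mathscr{P}_{i-1}$-class containing $P_{j+1}$. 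Iterating the swaps and using the $\gamma n$ symmetric-difference control from $\mathscr{P}_0$ together with the minimum-degree hypothesis closes the walk. By \cref{claim:odd walk to odd cycle}, the resulting odd closed walk is either a genuine odd cycle of the same length in $G$ or contains a strictly shorter odd cycle with a repeated vertex, hence lying in a non-singular $\mathscr{P}_1$-class.

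With this lifting principle in hand, I would run an outer induction on $\ell = 2,3,\ldots,k-1$ proving two assertions in tandem: (A) $G$ contains no non-singular $C_{2\ell-1}$, and (B) $H_\ell$ is non-singular $C_{2\ell'-1}$-free for every $2 \le \ell' \le \ell$. For (A), assuming a non-singular $C_{2\ell-1}$ in $G$ with non-singular vertex $v$, I would pick a twin $v'\neq v$ in the same $\mathscr{P}_1$-class so that $|N(v)\triangle N(v')|\le \gamma n$; combined with the large common neighborhoods of the cycle vertices guaranteed by $\delta(G)$, this allows extension of the short cycle through $v'$ and suitable common neighbors into a forbidden cycle of length $2k-1$ or $2k-3$ in case~(1), and of length $2k+1$ or $2k-1$ in case~(2), contradicting the strengthened freeness. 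Assertion (B) then follows from the lifting principle combined with (A). The final conclusion that $H_k$ is $C_{2k-1}$-free is obtained by one more application of the lifting principle at level $i=k$: any putative $C_{2k-1}$ in $H_k$ lifts to a closed odd walk of length $2k-1$ in $G$, which \cref{claim:odd walk to odd cycle} forces to be either a forbidden $C_{2k-1}$ in $G$ or a strictly shorter non-singular odd cycle ruled out by (A). The required homomorphism $v\mapsto v^{(k)}$ from $G$ to $H_k$ is then immediate from the definition of $H_k$.

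The main obstacle I anticipate is the detailed case analysis inside the lifting and extension steps, patterned on the $C_5$ warm-up in \cref{sec:warmup}: each swap inside a $\mathscr{P}_i$-class may accidentally introduce a shortcut of the wrong parity or miss a needed adjacency, and verifying that every such alteration either closes into a genuinely forbidden cycle or lands on a non-singular shorter odd cycle requires tracking singularity at every refinement level $1,2,\ldots,k$. The quantitative choices $\gamma = \varepsilon/3$ and the $2k\varepsilon$ fudge factor in the hypothesis of case~(1) are tailored so that the cumulative $\gamma n$ symmetric-difference errors from multiple swaps can be absorbed while keeping all the common neighborhoods that drive the argument genuinely non-empty.
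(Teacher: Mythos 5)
Your proposal captures the broad strokes of the paper's argument (lift cycles in $H_k$ back to closed walks in $G$, use twin vertices to inflate short non-singular odd cycles into forbidden ones, invoke \cref{claim:odd walk to odd cycle}), but there are three concrete gaps.

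First, the preprocessing via \cref{lemma:NoSmallOddCycles} fails in case~(2): that lemma requires $\delta(G)\ge(\frac{1}{2k-1}+\varepsilon)n$, whereas case~(2) only assumes $\delta(G)\ge\varepsilon n$. You therefore cannot conclude $C_\ell$-freeness for odd $\ell\in[k+1,2k+1]$, and the ``strengthened freeness'' you invoke later in case~(2) is unavailable. (Fortunately the paper's argument never needs it --- the contradiction in \cref{claim:GNonSingularCycles} is reached directly via $C_{2k+1}$ or $C_{2k-1}$ alone --- but your plan as written relies on a false premise.)

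Second, your lifting principle overclaims: \cref{claim:from hom path to real walk} lifts a walk $P_t P_{t-1}\cdots P_0$ in $H_{t+1}$ to an \emph{open} walk $x_t\cdots x_0$ in $G$. When you start from a cycle $P_{-\ell+1}\cdots P_{\ell-1}$ in $H_k$ you get two walks terminating at possibly distinct vertices $y_0\ne z_0$ in the class $P_0$, and closing the walk is where the real work lies: one must invoke the $\gamma n$ symmetric-difference control to produce a common neighbor $x$ of $y_0,z_0$ and form a closed walk $W'$ of length $2\ell+1$ --- one longer than the cycle you started with. This parity jump is exactly the crux, and you do not address it.

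Third, and most seriously, your sketch omits the preprocessing step that makes case~(1) with $\ell=k$ work. When the lifted walk fails to close, you get a non-singular $C_{2k+1}$ in $G$, which is \emph{not} forbidden in case~(1) (only $C_{2k-1}$ and $C_{2k-3}$ are). The paper resolves this by exploiting the extra slack $2k\varepsilon$ in the minimum degree: a pigeonhole argument over the $2k-1$ cycle vertices produces antipodal $x_{-p},x_p\in P_{-p},P_p$ with $>\varepsilon n$ common neighbors, which --- transferred via the symmetric-difference bound to $y_{-p},z_p$ --- yields a shortcut chord producing a non-singular odd cycle of length $2k-2p+1\le 2k-1$, contradicting \cref{claim:GNonSingularCycles}. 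Without this step your argument has no way to finish case~(1) at $\ell=k$, which is precisely what the $2k\varepsilon$ ``fudge factor'' you mention is actually for.

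Minor further remark: the ``outer induction on $\ell$'' proving assertions about $H_\ell$ for $\ell<k$ is unnecessary --- the lemma only concerns $H_k$, and the paper proves non-singular $C_{2\ell-1}$-freeness of $G$ for all $\ell\le k$ at once (\cref{claim:GNonSingularCycles}) before doing a single lifting step at level $k$.
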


\begin{proof}[Proof of~\cref{thm:ByproductOne}]
Let $k,d\ge 2$, $\varepsilon>0$ and $G$ be an $n$-vertex graph with $\delta(G)\ge \varepsilon n$ and $\textup{VC}(G)\le d$. If $G$ is $\mathscr{C}_{2k+1}$-free, then by~\cref{lemma:main}(2), $G$ is homomorphic to $H:=H_k(G,\frac{\varepsilon}{3})$ where $H$ is $C_{2k-1}$-free and non-singular $C_{2\ell-1}$-free for $2\le\ell\le k-1$. Since $G$ is $\mathscr{C}_{2k+1}$-free and any singular $C_{2\ell-1}$ in $H$ corresponds to a copy of $C_{2\ell-1}$ in $G$, we can see $H$ is singular $C_{2\ell-1}$-free for $2\le\ell\le k-1$. Therefore $H$ is $\mathscr{C}_{2k-1}$-free. Thus, $\delta_{\textup{hom}}^{\textup{VC}}(\mathscr{C}_{2k+1};\mathscr{C}_{2k-1})=0$.

Note that $\delta_{\textup{hom}}^{\textup{VC}}(\{C_{2k+1},C_{2k-1}\};C_{2k-1})=0$ follows directly from~\cref{lemma:main}(2).
\end{proof}

Define the \emph{tower function} recursively as $\textup{tw}_{1}(x)=x$ and $\textup{tw}_{i+1}(x)=\textup{tw}_{i}(x)2^{\textup{tw}_{i}(x)}$. For the upper bound of~\cref{thm:SharpThreshold}, we shall prove that $|H|\le \textup{tw}_{k}(r)$, $r\le e(d+1)(\frac{12ke}{\varepsilon})^{d}$ and $d\le (2k-1)^3\binom{2k-2}{k-1} 2^{(2k-1)^{2k-2}}$. 

\begin{proof}[Proof of upper bound for~\cref{thm:SharpThreshold}]
Let $G$ be an $n$-vertex maximal $C_{2k-1}$-free graph with minimum degree at least $(\frac{1}{2k-1}+\varepsilon)n$. Then $\textup{VC}(G)$ is at most $d:=(2k-1)^3\binom{2k-2}{k-1} 2^{(2k-1)^{2k-2}}$ by~\cref{lemma:Bounded VC-dimension}, and $G$ is $C_{2k-3}$-free by~\cref{lemma:NoSmallOddCycles}. Note that $G$ satisfies the first condition in~\cref{lemma:main}, then $G \xrightarrow{\textup{hom}} H$, where $H=H_k(G,\frac{\varepsilon}{6k})$ is $C_{2k-1}$-free and does not contain non-singular $C_{2\ell-1}$ for any $2\le \ell\le k-1$. 

We shall prove that $G$ is isomorphic to $G^\prime =H[\cdot]$, where the size of each part is based on the partition $\mathscr{P}_k(G,\frac{\varepsilon}{6k})$. Since $G$ is maximal $C_{2k-1}$-free and $G\subseteq G'$, it suffices to show that $G^\prime$ is $C_{2k-1}$-free. Suppose that $G^\prime$ contains a copy of $C_{2k-1}$, say $v_1v_2\cdots v_{2k-1}v_1$, which corresponds to a closed walk $W=v_1^{(k)}v_2^{(k)}\cdots v_{2k-1}^{(k)}v_{1}^{(k)}$ of length $2k-1$ in $H$. Since $H$ is $C_{2k-1}$-free, $W$ is not a cycle in $H$, by~\cref{claim:odd walk to odd cycle}, there exists an odd cycle $C$ of length at most $2k-3$ such that one of the vertex, say $v_i^{(k)}$, in $C$ appears at least twice in $W$. But then  $v_i^{(1)}$ contains at least two elements in $\{v_1,v_2,\dots,v_{2k-1}\}$ and hence $C$ is a non-singular odd cycle of length at most $2k-3$ in $H$, a contradiction to~\cref{lemma:main}.
    
 By~\cref{def:PartitionRules}, when $\gamma=\frac{\varepsilon}{6k}$, the number of classes of $\mathscr{P}_{0}$ is at most $r= e(d+1)(\frac{12ke}{\varepsilon})^{d}$, then we have $|H|\le \textup{tw}_{k}(r)$. This finishes the proof.
\end{proof}

\subsection{Proof of~\cref{lemma:main}}

Let $G$ be a graph that satisfies one of the conditions in~\cref{lemma:main}. We consider partitions of $G$ as in~\cref{def:PartitionRules}. Let us first establish some useful claims.
\begin{claim}\label{claim:from hom path to real walk}
    For any integer $t\ge 1$, let $P_{t}P_{t-1}\cdots P_{0}$ be a walk in $H_{t+1}(G,\frac{\varepsilon}{3})$, where each $P_i$ is a class of $\mathscr{P}_{t+1}(G,\frac{\varepsilon}{3})$. Then for any vertex $x_{t}\in V(G)$ belonging to the class $P_t$, there exists a walk $x_{t}x_{t-1}\cdots x_{0}$ in $G$ such that $x_i$ belongs to the class $P_i^{(i+1)}$.
\end{claim}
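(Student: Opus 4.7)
The plan is a short reverse induction on the index $i$, lifting the abstract walk $P_tP_{t-1}\cdots P_0$ in $H_{t+1}(G,\frac{\varepsilon}{3})$ to a concrete walk $x_tx_{t-1}\cdots x_0$ in $G$, starting from the given vertex $x_t\in P_t$. The point to keep in mind throughout is that all the $P_i$'s are classes of the fine partition $\mathscr{P}_{t+1}$, whereas the target containment $x_i\in P_i^{(i+1)}$ uses progressively coarser partitions $\mathscr{P}_{i+1}$ as $i$ decreases; this gap is exactly what the refinement rule of~\cref{def:PartitionRules} lets us exploit.

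The base case is trivial: since $P_t$ is itself a class of $\mathscr{P}_{t+1}$ we have $P_t=P_t^{(t+1)}$, so any $x_t\in P_t$ satisfies $x_t\in P_t^{(t+1)}$.

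For the inductive step, assume $x_i\in P_i^{(i+1)}$ has been constructed, and we seek $x_{i-1}\in P_{i-1}^{(i)}$ with $x_ix_{i-1}\in E(G)$. Because $P_iP_{i-1}$ is an edge of $H_{t+1}(G,\frac{\varepsilon}{3})$, the definition in~\cref{defn:aux-H} gives vertices $u\in P_i$ and $v\in P_{i-1}$ with $uv\in E(G)$. Since $P_{i-1}\subseteq P_{i-1}^{(i)}$, this shows $N(u)\cap P_{i-1}^{(i)}\neq\emptyset$. Now $u$ and $x_i$ both lie in $P_i^{(i+1)}$, i.e.\ in a common class of $\mathscr{P}_{i+1}$; by the refinement rule producing $\mathscr{P}_{i+1}$ from $\mathscr{P}_i$, for every class $Q$ of $\mathscr{P}_i$ one has $N(u)\cap Q=\emptyset$ if and only if $N(x_i)\cap Q=\emptyset$. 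Applying this to $Q=P_{i-1}^{(i)}$ forces $N(x_i)\cap P_{i-1}^{(i)}\neq\emptyset$, so we may pick any element of this intersection as $x_{i-1}$.

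Iterating this step from $i=t$ down to $i=1$ produces the required walk. I do not anticipate any real obstacle; the statement is essentially an unwinding of the definition, and the only subtle point is the index bookkeeping between the partition levels $\mathscr{P}_{i+1}$ (where $x_i$ is certified to lie) and $\mathscr{P}_i$ (whose classes the refinement rule compares against when locating $x_{i-1}$).
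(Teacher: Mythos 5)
Your proof is correct and rests on the same key mechanism as the paper's: two vertices lying in a common class of $\mathscr{P}_{i+1}$ have identical emptiness patterns of their neighborhoods against the classes of $\mathscr{P}_i$, which lets you transfer the existence of a neighbor in $P_{i-1}^{(i)}$ from some $u\in P_i$ to the already-constructed $x_i\in P_i^{(i+1)}$. The paper packages the iteration as an induction on the walk length $t$ (coarsening the truncated walk to $H_t$ via $Q_i:=P_i^{(t)}$ and invoking the induction hypothesis), whereas you run a direct downward induction on the index $i$; this is only an organizational difference, and your bookkeeping of the partition levels is accurate.
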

\begin{poc}     
    As $P_{t}P_{t-1}$ is an edge in $H_{t+1}(G,\frac{\varepsilon}{3})$, $(P_{t},P_{t-1})$ induces an edge of $G$, which also yields that $(P_{t},P_{t-1}^{(t)})$ induces an edge of $G$. Furthermore by the definition of $\mathscr{P}_{t+1}(G,\frac{\varepsilon}{3})$, every vertex in class $P_{t}$ has at least one neighbor in $P_{t-1}^{(t)}$. As $x_{t}$ belongs to class $P_{t}$, we can arbitrarily pick one neighbor of $x_{t}$ in $P_{t-1}^{(t)}$ to be $x_{t-1}$. 

When $t=1$, by the above argument the claim holds. We prove the full claim by induction on $t$. Let $t\ge 2$ and assume the claim holds for $t-1$. 
    We set $Q_{i}:=P_{i}^{(t)}$ for $0\le i\le t-1$. Notice that $Q_{t-1}Q_{t-2}\cdots Q_{0}$ is a walk in $H_{t}(G,\frac{\varepsilon}{3})$ and recall that $x_{t-1}$ belongs to class $Q_{t-1}=P_{t-1}^{(t)}$. By induction hypothesis, there exists $x_{t-1}x_{t-2}\cdots x_{0}$ in $G$ such that $x_i$ belongs to the class $Q_i^{(i+1)}=P_i^{(i+1)}$. Then $x_{t}x_{t-1}\cdots x_{0}$ is a walk in $G$ as desired.
\end{poc}

Denote $c_{d,\varepsilon}=2^{r}$, where $r=e(d+1)(\frac{12e}{\varepsilon})^{d}$, then $\mathscr{P}_{0}(G,\frac{\varepsilon}{3})$ has at most $r$ classes and $\mathscr{P}_{1}(G,\frac{\varepsilon}{3})$ has at most $c_{d,\varepsilon}$ classes. We say a path $xv_{1}v_{2}\cdots v_{s}y$ \emph{avoids} a subset $T$ if $v_{i}\notin T$ for each $i\in [s]$. 
\begin{claim}
    Let $t\in\mathbb{N}$ and $T\subseteq V(G)$ with $|T|<\frac{2\varepsilon n}{3} -c_{d,\varepsilon}-2t$. Then for any distinct vertices $x,y$ with $|N(x)\cap N(y)|\geq\frac{2\varepsilon n}{3}$, there exists a path of length $2t$ between $x,y$ that avoids $T$ in $G$.
\end{claim}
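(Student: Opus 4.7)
The plan is to prove the claim by induction on $t$. The base case $t=1$ is immediate from the hypothesis: since $|N(x)\cap N(y)\setminus T|\ge |N(x)\cap N(y)| -|T|> c_{d,\varepsilon}+2>0$, pick any $z$ in this set, and then $xzy$ is the desired length-$2$ path, with $z\neq x,y$ because $z\in N(x)\cap N(y)$.

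For the inductive step, suppose the claim holds for $t-1$ and consider level $t$. Using the hypothesis $|T|<\tfrac{2\varepsilon n}{3}-c_{d,\varepsilon}-2t$, we have
\[
|N(x)\cap N(y)\setminus T|> c_{d,\varepsilon}+2t\ge c_{d,\varepsilon}+1.
\]
Since $\mathscr{P}_1(G,\varepsilon/3)$ has at most $c_{d,\varepsilon}$ classes, pigeonhole yields a class $P$ of $\mathscr{P}_1(G,\varepsilon/3)$ containing two distinct vertices $z_1,z_2\in N(x)\cap N(y)\setminus T$. As $\mathscr{P}_1$ refines $\mathscr{P}_0(G,\varepsilon/3)$, the vertices $z_1,z_2$ lie in a common class of $\mathscr{P}_0$, and therefore $|N(z_1)\triangle N(z_2)|\le \tfrac{\varepsilon n}{3}$. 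Consequently,
\[
|N(z_1)\cap N(z_2)|\ge \delta(G)-\frac{\varepsilon n}{3}\ge \varepsilon n-\frac{\varepsilon n}{3}=\frac{2\varepsilon n}{3},
\]
where the bound $\delta(G)\ge\varepsilon n$ holds in both cases of the hypothesis of \cref{lemma:main}.

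Now set $T':=T\cup\{x,y\}$, so that $|T'|\le |T|+2<\tfrac{2\varepsilon n}{3}-c_{d,\varepsilon}-2(t-1)$. Apply the inductive hypothesis to the pair $z_1,z_2$ with exclusion set $T'$: there is a path $z_1w_1w_2\cdots w_{2t-3}z_2$ of length $2t-2$ in $G$ avoiding $T'$. Prepending $x$ and appending $y$ yields the walk
\[
W:\quad x\,z_1\,w_1\,w_2\,\cdots\,w_{2t-3}\,z_2\,y
\]
of length $2t$. To verify $W$ is a path, note that each $w_i\notin T'$ so $w_i\neq x,y$; the interior sub-path from $z_1$ to $z_2$ is a path by the inductive step; and $z_1,z_2\notin\{x,y\}$ because each is a neighbor of both $x$ and $y$ in the simple graph $G$. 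Finally, $W$ avoids $T$ because its interior vertices avoid $T'\supseteq T$.

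The only place where some care is needed is the pigeonhole, which requires the surplus $|N(x)\cap N(y)\setminus T|>c_{d,\varepsilon}$, and the propagation of the inductive hypothesis, which requires $|T'|$ to fit the slightly weaker bound $\tfrac{2\varepsilon n}{3}-c_{d,\varepsilon}-2(t-1)$; both are guaranteed by the quantitative form of the hypothesis on $|T|$, making the induction self-contained.
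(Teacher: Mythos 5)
Your proof is correct and follows essentially the same route as the paper's: induction on $t$, pigeonhole among the boundedly many partition classes to find two common neighbors $z_1,z_2$ of $x,y$ with $|N(z_1)\triangle N(z_2)|\le \varepsilon n/3$ and hence $|N(z_1)\cap N(z_2)|\ge 2\varepsilon n/3$, then apply the inductive hypothesis with $T'=T\cup\{x,y\}$. The only (immaterial) differences are that the paper pigeonholes into $\mathscr{P}_0$ rather than $\mathscr{P}_1$ and excludes $x,y$ from the candidate set up front instead of deducing $z_i\neq x,y$ from adjacency.
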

\begin{poc}
   We prove by induction on $t$. Since $|T|< \frac{2\varepsilon n}{3} -c_{d,\varepsilon}-2t\leq |N(x)\cap N(y)|-2$, the base case $t=1$ is trivial. For $t\geq 2$, assume that this claim holds for $t-1$. By $|N(x)\cap N(y)\setminus (T\cup\{x,y\})|\geq \frac{2\varepsilon n}{3} -(\frac{2\varepsilon n}{3} -c_{d,\varepsilon}-2t)\geq c_{d,\varepsilon}+2t\geq c_{d,\varepsilon}+4$. By pigeonhole principle we can always find two vertices $x^\prime,y^\prime\in N(x)\cap N(y)\setminus(T\cup\{x,y\})$ such that $x^\prime$ and $y^\prime$ belong to the same class of $\mathscr{P}_{0}(G,\frac{\varepsilon}{3})$. By~\cref{def:PartitionRules}, we have $|N(x')\triangle N(y')|\le \frac{\varepsilon n}{3}$. Therefore by the minimum degree conditions, 
   \begin{equation}\label{eq:sameclass}
       |N(x')\cap N(y')|\ge\varepsilon n-\frac{\varepsilon n}{3}\ge\frac{2\varepsilon n}{3}.
   \end{equation}
   Thus, we can apply the induction hypothesis to $T^\prime=T\cup\{x,y\}$, $x^\prime$, $y^\prime$ and $t^\prime=t-1$, there exists a path $P^\prime$ of length $2t^\prime=2t-2$ between $x^\prime$ and $y^\prime$ that avoids $T^\prime$ in $G$. Thus $P=xx^\prime P^\prime y^\prime y$ is the required path of length $2t$ that avoids $T$ in $G$. This finishes the proof.
\end{poc}

Note that thanks to~\eqref{eq:sameclass}, we can apply the above claim to find a path of length $2k-2$ between any two vertices in the same class of $\mathscr{P}_{0}(G,\frac{\varepsilon}{3})$. Consequently, each class of $\mathscr{P}_{0}(G,\frac{\varepsilon}{3})$ is an independent set as $G$ is $C_{2k-1}$-free. Moreover, we can immediately obtain the following.

\begin{claim}\label{claim:exists path}
    Let $T\subseteq V(G)$ be a subset with at most $100k$ vertices. Then for any two vertices $x,y$ in the same class of $\mathscr{P}_{0}(G,\frac{\varepsilon}{3})$ and for any positive even integer $s$ less than $100k$, there exists a path of length $s$ between $x,y$ which avoids $T$ in $G$. 
\end{claim}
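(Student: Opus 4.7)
The plan is to derive this claim as a straightforward corollary of the immediately preceding claim, which already constructs even-length paths between pairs of vertices sharing many common neighbors while avoiding a prescribed forbidden set. The only two things to check are (a) that the hypothesis $|N(x)\cap N(y)|\ge 2\varepsilon n/3$ is automatic for vertices in a common class of $\mathscr{P}_0(G,\varepsilon/3)$, and (b) that the numerical hypothesis on $|T|$ in the previous claim is satisfied with room to spare when $|T|\le 100k$ and $s<100k$.

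For (a), since $x$ and $y$ lie in the same class of $\mathscr{P}_0(G,\varepsilon/3)$, the definition of $\mathscr{P}_0$ via the partition lemma gives $|N(x)\triangle N(y)|\le \varepsilon n/3$. Combining this with whichever minimum-degree assumption is in force in \cref{lemma:main} (in case (1), $\delta(G)\ge(\frac{1}{2k-1}+2k\varepsilon)n\ge \varepsilon n$; in case (2), $\delta(G)\ge \varepsilon n$ directly), we immediately obtain
\[
|N(x)\cap N(y)|\ \ge\ \delta(G)-|N(x)\triangle N(y)|\ \ge\ \varepsilon n-\frac{\varepsilon n}{3}\ =\ \frac{2\varepsilon n}{3},
\]
exactly as used in the displayed inequality of the previous claim. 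For (b), write $s=2t$ with $1\le t<50k$; the hypothesis of the preceding claim requires $|T|<\frac{2\varepsilon n}{3}-c_{d,\varepsilon}-2t$, which since $|T|\le 100k$ and $2t<100k$ amounts to $200k+c_{d,\varepsilon}<\frac{2\varepsilon n}{3}$. This holds for all sufficiently large $n$ because $c_{d,\varepsilon}$ depends only on $d$ and $\varepsilon$, and the paper runs under the global assumption that $n$ is as large as needed.

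With (a) and (b) in hand, applying the previous claim with this $T$ and this $t$ produces the required path of length $s=2t$ from $x$ to $y$ in $G$ avoiding $T$. There is no real obstacle here: the claim is effectively a packaging step extracting the case most useful for subsequent arguments (pairs in the same class of $\mathscr{P}_0$, with only constant-size $T$ and constant even $s$), and the proof reduces to an inequality check that is trivial for large $n$.
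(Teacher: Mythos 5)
Your proof is correct and is exactly the argument the paper intends: the paper presents this claim as an immediate consequence of the preceding claim together with the observation (recorded in~\eqref{eq:sameclass}) that two vertices in a common class of $\mathscr{P}_0(G,\varepsilon/3)$ share at least $\tfrac{2\varepsilon n}{3}$ common neighbors, noting that the size condition $100k + c_{d,\varepsilon} + 2t < \tfrac{2\varepsilon n}{3}$ is trivially met for large $n$. You have simply spelled out what the paper leaves implicit.
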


\begin{claim}\label{claim:GNonSingularCycles}
    $G$ is non-singular $C_{2\ell-1}$-free for $2\le\ell\le k$.
\end{claim}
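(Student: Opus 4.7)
I would argue by contradiction: suppose $G$ contains a non-singular odd cycle $C=v_1v_2\cdots v_{2\ell-1}v_1$ with $2\le \ell\le k$. The plan is first to dispose of two extremal cases directly from the hypotheses of \cref{lemma:main}: when $\ell=k$ the cycle is itself a $C_{2k-1}$, forbidden under both conditions; when $\ell=k-1$ under condition (1), the cycle is a $C_{2k-3}$, also forbidden. The remaining work is then to rule out $2\le\ell\le k-2$ under either condition, together with $\ell=k-1$ under condition (2). In these remaining cases, the strategy is to exploit the non-singularity of $C$ together with \cref{claim:exists path} to extend $C$ into a forbidden longer odd cycle, namely a $C_{2k-1}$ in general, or a $C_{2k+1}$ for the remaining subcase of condition (2).

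After relabeling, I may assume that $v_1^{(1)}$ is non-singular and fix some $v_1'\in v_1^{(1)}\setminus\{v_1\}$. Because $v_{2\ell-1}\in N(v_1)\cap v_{2\ell-1}^{(0)}$, the definition of $\mathscr{P}_1$ produces a vertex $u\in N(v_1')\cap v_{2\ell-1}^{(0)}$, so that $u$ and $v_{2\ell-1}$ lie in the same $\mathscr{P}_0$-class, and similarly $v_1,v_1'$ lie in the common class $v_1^{(0)}$. The core of the construction is to replace the edge $v_{2\ell-1}v_1$ of $C$ by the walk $v_{2\ell-1}\,P_1\,u\,v_1'\,P_2\,v_1$, where $P_1$ is an even-length path from $v_{2\ell-1}$ to $u$ of length $s_1\ge 0$ (with the convention $s_1=0$ exactly when $u=v_{2\ell-1}$), and $P_2$ is an even-length path from $v_1'$ to $v_1$ of length $s_2\ge 2$ (since $\mathscr{P}_0$-classes are independent and $v_1'\neq v_1$). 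The resulting closed walk has length $2\ell-1+s_1+s_2$.

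Since both $(v_{2\ell-1},u)$ and $(v_1',v_1)$ are pairs in common $\mathscr{P}_0$-classes, \cref{claim:exists path} will supply paths of any prescribed positive even length up to $100k$ avoiding any fixed set of at most $100k$ vertices; I would include the other vertices of $C$, the vertex $v_1'$, and the internal vertices of $P_1$ in these avoidance sets to guarantee that the concatenated walk is a genuine cycle. For $2\le\ell\le k-2$, taking $s_1\in\{0,2\}$ and $s_2=2(k-\ell)-s_1\ge 2$ yields a $C_{2k-1}$, contradicting $C_{2k-1}$-freeness. For $\ell=k-1$ under condition (2), taking $s_1=s_2=2$ (or $s_1=0,\ s_2=2$ if $u=v_{2\ell-1}$) produces a $C_{2k+1}$ (respectively a $C_{2k-1}$), contradicting $\{C_{2k+1},C_{2k-1}\}$-freeness.

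The main obstacle will be the bookkeeping required to certify that the concatenation is a genuine cycle rather than a closed walk with self-intersections. This is handled uniformly by baking all previously fixed vertices into the avoidance set $T$ when invoking \cref{claim:exists path}; since $|T|=O(k)$ stays well below the $100k$ threshold and all lengths $s_1,s_2$ are $O(k)$, the claim applies without trouble, and every case above yields one of the forbidden cycles $C_{2k-1}$ or $C_{2k+1}$.
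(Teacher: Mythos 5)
Your overall strategy coincides with the paper's: after disposing of $\ell=k$ (and $\ell=k-1$ under condition (1)) directly from the forbidden-cycle hypotheses, you use non-singularity to pick a twin $v_1'$ of $v_1$, use the $\mathscr{P}_1$-refinement to produce $u\in N(v_1')\cap v_{2\ell-1}^{(0)}$, and splice even-length paths from \cref{claim:exists path} into the cycle to manufacture a forbidden $C_{2k-1}$ or $C_{2k+1}$. The length bookkeeping ($s_1+s_2=2(k-\ell)$ for $\ell\le k-2$, the $2k+1$ versus $2k-1$ dichotomy for $\ell=k-1$ under condition (2)) is correct and matches the paper's construction up to relabeling which edge of the cycle is being doubled.

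There is, however, a genuine gap in your claim that self-intersections are ``handled uniformly by baking all previously fixed vertices into the avoidance set.'' That mechanism only controls the \emph{internal} vertices of $P_1$ and $P_2$, which you genuinely choose via \cref{claim:exists path}. It does not control $v_1'$ and $u$, which are \emph{forced}: $v_1'$ is some element of $v_1^{(1)}\setminus\{v_1\}$ and $u$ is whatever vertex the definition of $\mathscr{P}_1$ provides in $N(v_1')\cap v_{2\ell-1}^{(0)}$, and neither of these sets is guaranteed to contain anything outside $V(C)$. If, say, $N(v_1')\cap v_{2\ell-1}^{(0)}=\{v_j\}$ for some other vertex $v_j$ of $C$ (possible once $\ell\ge 3$), your closed walk passes through $v_j$ twice and is not a cycle, and extracting a sub-cycle via \cref{claim:odd walk to odd cycle} loses control of the length. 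Observe that any such collision forces two distinct vertices of $C$ to lie in the same $\mathscr{P}_0$-class. The paper pre-empts exactly this with a preliminary case split: if $v_i^{(0)}=v_j^{(0)}$ for some $i\neq j$, it closes up the odd arc of $C$ between $v_i$ and $v_j$ with an even path of the appropriate length from \cref{claim:exists path} to obtain the forbidden cycle directly; only after reducing to the case where all $v_i^{(0)}$ are pairwise distinct (which does guarantee $v_1',u\notin V(C)\setminus\{v_{2\ell-1}\}$) does it run the twin-substitution argument. You need to add this case split; with it, your argument goes through.
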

\begin{poc}
    The statement for $\ell=k$ follows from $G$ being $C_{2k-1}$-free. For $2\leq \ell \leq k-1$, we prove the claim by contradiction. Suppose there exists a non-singular $C_{2\ell-1}$ in $G$ with vertices $x_0, x_1,\ldots, x_{2\ell-2}$. First we show that there exists $C_{t}$ in $G$ for any odd number $2\ell+3\leq t< 100k$. 
    
    If there exists $i\neq j$, $x_i^{(0)}=x_j^{(0)}$, then for any positive even number $s<100k$, by~\cref{claim:exists path} there exists a path of length $s$ between $x_j$ and $x_i$ that avoids $T=\{x_0,x_1,\dots,x_{2\ell-2}\}$ in $G$. Since $x_{i}$ and $x_{j}$ belong to a copy of $C_{2\ell-1}$, there exists a path between $x_{i}$ and $x_{j}$ of length $\ell'$, where $1\le \ell'\le 2\ell-3$ is an odd number, such that all of the internal vertices belong to $T$. We can then combine these two paths between $x_i$ and $x_j$ to obtain a copy of $C_{t}$ for all odd number $2\ell -1 \leq t< 100k$.

    We may then assume that all $x_i^{(0)}$ are pairwise distinct. Without loss of generality, assume that $x_0^{(1)}$ is not singular, then we can pick $y_0\in x_0^{(1)}$ other than $x_0$. By~\cref{def:PartitionRules}, $y_0^{(0)}=x_0^{(0)}$. Moreover, since $x_{i}^{(0)}$ are pairwise distinct, $y_0\notin \{x_0,x_1,\dots,x_{2\ell-2}\}$. Since $x_{0}x_{1}\in E(G)$ and $y_0\in x_0^{(1)}$, by~\cref{def:PartitionRules} there exists a vertex $y_{1}$ in $x_1^{(0)}$ such that $y_{0}y_{1}\in E(G)$. If $y_{1}=x_1$, then $y_0 x_1\cdots x_{2\ell-2} x_0 T_{0} y_0$ is a copy of $C_{t}$ in $G$, where $T_0$ is a path between $x_0,y_0$ of even length $t-2\ell+1$ that avoids $ \{x_1,\ldots, x_{2\ell-2}\} $ in $G$ guaranteed by~\cref{claim:exists path}. On the other hand, if $y_{1}\neq x_1$, notice that $y_1^{(0)}=x_1^{(0)}$, therefore $y_1\notin \{y_0,x_0,x_1,\dots,x_{2\ell-2}\}$. Then $x_1\cdots x_{2\ell-2} x_0 T_0 y_0 y_1 T_1 x_1$ is a copy of $C_{t}$ in $G$, where $T_0$ is a path between $x_0,y_0$ of even length $t-2\ell-1$ that avoids $ \{y_1,x_1,\ldots,x_{2\ell-2}\}$ in $G$ and $T_1$ is a path between $x_1,y_1$ of length $2$ that avoids $ \{x_0,y_0,y_1,x_1,\dots,x_{2\ell-2} \}\cup V(T_{0})$ in $G$, both of which are guaranteed by~\cref{claim:exists path}.

    Thus, there exists a copy of $C_{t}$ in $G$ for all odd number $2\ell+3\leq t< 100k$. In the case of $\ell\leq k-2$, we can see $2\ell+3\leq 2k-1< 100k$, thus there exists a copy of $C_{2k-1}$ in $G$, a contradiction.
    Otherwise when $\ell =k-1$, then there exists a copy of $C_{2k-3}$ in $G$ by our assumption. By $2\ell+3\leq 2k+1< 100k$, there is a copy of $C_{2k+1}$ in $G$. Thus $G$ is neither $C_{2k-3}$-free nor $C_{2k+1}$-free, contradicting both conditions in~\cref{lemma:main}. This finishes the proof.
\end{poc}

By the mapping $v\rightarrow v^{(k)}$ and recalling that $v^{(k)}\subseteq v^{(0)}$ is an independent set, $G$ is homomorphic to $H_{k}(G,\frac{\varepsilon}{3})$. Note that a singular $C_{2k-1}$ in $H_{k}(G,\frac{\varepsilon}{3})$ corresponds to a copy of $C_{2k-1}$ in $G$, therefore $H_{k}(G,\frac{\varepsilon}{3})$ is singular $C_{2k-1}$-free since $G$ is $C_{2k-1}$-free. It remains to show that $H_{k}(G,\frac{\varepsilon}{3})$ does not contain non-singular $C_{2\ell-1}$ for any $2\leq \ell\leq k$. 

Suppose to the contrary that there exists a non-singular $C_{2\ell-1}$ in $H:=H_{k}(G,\frac{\varepsilon}{3})$ for some $2\leq \ell\leq k$. We denote this non-singular $C_{2\ell-1}$ in $H$ as $P_{-\ell+1}P_{-\ell+2}\cdots P_{-1}P_{0}P_{1}\cdots P_{\ell-2}P_{\ell-1}$. If $\ell=k$ and $G$ satisfies the first condition of~\cref{lemma:main}, then let us do the following preprocessing step that will be useful later. For each $i\in\{-k+1,\ldots,-1,0,1,k-1\}$, we pick an arbitrary vertex $x_i$ in the class $P_i$. As $\delta(G)\ge(\frac{1}{2k-1}+ 2k\varepsilon )n$, 
$$\sum_{i=-k+1}^{k-1} |N(x_i)|-n \ge (2k-1)\Big(\frac{1}{2k-1}+ 2k\varepsilon \Big)n-n=  \binom{2k}{2}\cdot 2\varepsilon n.$$
Thus, by the pigeonhole principle there exist two elements in $\{x_{-k+1},x_{-k+2},\dots,x_0,\dots,x_{k-2},x_{k-1}\}$ with more than $\varepsilon n$ common neighbors. By the symmetry of this $C_{2k-1}$, we can apply suitable relabeling so that for some integer $1\le p\leq k-1$, $x_{-p}$ and $x_{p}$ have more than $\varepsilon n$ common neighbors. This completes the preprocessing step.

As this $C_{2\ell-1}$ in $H$ is non-singular, there is some index $q\in\{-\ell+1,-\ell+2,\ldots,-1,0,1,\ldots,\ell-1\}$ such that $P_q^{(1)}$ is not singular. Since $P_{\ell-1}P_{-\ell+1}\in E(H)$, there exist $y_{-\ell+1}$ in class $P_{-\ell+1}$ and $z_{\ell-1}$ in class $P_{\ell-1}$ such that $y_{-\ell+1}z_{\ell-1}\in E(G)$. 
Using~\cref{claim:from hom path to real walk} for $t=\ell-1$, we obtain two walks in $G$, namely $y_0y_{-1}\cdots y_{-\ell+1}$ and $z_{\ell-1}\cdots z_1z_0$, where $y_{-i}$ belongs to class $P_{-i}^{(i+1)}$ and $z_{i}$ belongs to class $P_{i}^{(i+1)}$ for $0\le i\le\ell-1$. Since $y_{-\ell+1}z_{\ell-1}\in E(G)$, we see that $W=y_0y_{-1}\cdots y_{-\ell+1}z_{\ell-1}\cdots z_1z_0$ is a walk of length $2\ell-1$ in $G$. We distinguish two cases depending on whether $W$ is a closed walk.

Suppose that $y_0=z_0$ and $W$ is a closed walk of length $2\ell-1$ in $G$. If $W$ is a cycle in $G$, then recall that the class $P_q^{(1)}$ is not singular and so are $y_q^{(1)}=P_q^{(1)}$ if $q\le 0$ and $z_q^{(1)}=P_q^{(1)}$ if $q>0$. Therefore, the cycle $W$ is a non-singular $C_{2\ell-1}$ in $G$, a contradiction to~\cref{claim:GNonSingularCycles}. 

We may then assume that $W$ is not a cycle in $G$, in which case we must have $\ell\ge 3$. By~\cref{claim:odd walk to odd cycle}, there exists an odd cycle $C$ of length at most $2\ell-3$, where $E(C)\subseteq E(W)$ and there exists a vertex $v$ in $C$ which appears at least twice in the sequence $y_0y_{-1}\cdots y_{-\ell+1}z_{\ell-1}\cdots z_1$. Recall that $y_{-i}$ belongs to class $P_{-i}^{(i+1)}$ and $z_{i}$ belongs to class $P_{i}^{(i+1)}$ for $0\le i\le\ell-1$. Thus, there exist two indices $\alpha,\beta\in\{-\ell+1,-\ell+2,\ldots,-1,0,1,\ldots,\ell-1\}$ such that $v\in P_\alpha^{(|\alpha|+1)}\cap P_\beta^{(|\beta|+1)}$ and therefore $P_\alpha\cup P_{\beta}\subseteq v^{(1)}$. Notice that $P_\alpha$ and $P_\beta$ are two distinct classes of $\mathscr{P}_{k}(G,\frac{\varepsilon}{3})$ and hence are disjoint, which implies that 
\begin{equation}\label{eq:nonsingular}
  |v^{(1)}|\geq |P_\alpha\cup P_\beta|=|P_\alpha|+|P_\beta|\geq 2    
\end{equation}
and $v^{(1)}$ is not singular. Then $C$ is a non-singular odd cycle of length at most $2\ell-3$ in $G$, a contradiction to~\cref{claim:GNonSingularCycles}. This completes the case when $y_0=z_0$.

Suppose now that $y_0\neq z_0$. Since $y_0$ and $z_0$ belong to $P_0^{(1)}$, as in~\eqref{eq:sameclass} we have that \[|N(y_0)\cap N(z_0)|\geq \frac{2\varepsilon n}{3}\geq  |\mathscr{P}_1|+3k.\] Thus there exists a vertex $x\in \big(N(y_0)\cap N(z_0)\big)\setminus\{y_0,y_{-1},\dots ,y_{-\ell+1},z_{\ell-1},\dots ,z_1,z_0\}$ such that $x^{(1)}$ is not singular. Consider now $W'=xy_0y_{-1}\dots y_{-\ell+1}z_{\ell-1}\dots z_1z_0x$, which is a closed walk of length $2\ell+1$ in $G$.

If $W'$ is not a cycle in $G$, then by~\cref{claim:odd walk to odd cycle}, there exists a cycle $C$ of length at most $2\ell-1$ in $G$, where $E(C)\subseteq E(W)$ and there exists some vertex $v\neq x$ in $C$ that appears at least twice in $xy_0y_{-1}\cdots y_{-\ell+1}z_{\ell-1}\cdots z_1z_0$. Moreover, since $y_0\neq z_0$ and $v$ appears at least twice in $y_0y_{-1}\cdots y_{-\ell+1}z_{\ell-1}\cdots z_1z_0$, as in~\eqref{eq:nonsingular}, $v^{(1)}$ is not singular. Therefore, $C$ is a non-singular odd cycle of length at most $2\ell-1$ in $G$, a contradiction to~\cref{claim:GNonSingularCycles}.

If $W'$ is a cycle in $G$, then recall that  $x^{(1)}$ is not singular and so $W$ is a non-singular $C_{2\ell+1}$ in $G$. When $\ell\le k-1$, this contradicts~\cref{claim:GNonSingularCycles}. When $\ell=k$, this induces a copy of $C_{2k+1}$ in $G$, contradicting the second condition in~\cref{lemma:main}. 
Thus, we have arrived at the remaining case that $G$ satisfies the first condition of the lemma, i.e.,~$G$ is $\{C_{2k-1},C_{2k-3}\}$-free and $\delta(G)\ge(\frac{1}{2k-1}+ 2k\varepsilon )n$, $\ell=k$, $y_0\neq z_0$ and $W'$ is a cycle in $G$. Recall that for this case, our preprocessing step produces vertices $x_{-p}\in P_{-p}$ and $x_{p}\in P_{p}$, for some $p\in[k-1]$, which have more than $\varepsilon n$ common neighbors. As $W'=xy_0y_{-1}\dots y_{-k+1}z_{k-1}\dots z_1z_0$ is a non-singular odd cycle of length $2k+1$, $y_{-p}\cdots y_{-     k+1}z_{k-1}\cdots z_p$ is a path of length $2k-2p-1$. Since $y_{-p},x_{-p}\in P_{-p}^{(0)}$ and  $z_{p},x_{p}\in P_{p}^{(0)}$, we have $|N(y_{-p})\triangle N(x_{-p})|\leq \frac{\varepsilon n}{3}$ and $|N(z_{p})\triangle N(x_{p})|\leq \frac{\varepsilon n}{3}$, and then
\begin{equation*}
    |N(z_{p})\cap N(y_{-p})|\geq |N(x_{p})\cap N(x_{-p})|-|N(y_{-p})\triangle N(x_{-p})|-|N(z_{p})\triangle N(x_{p})|\geq \frac{\varepsilon n}{3}\geq 3k+|\mathscr{P}_1|.
\end{equation*}
By the pigeonhole principle there is a vertex $v\in N(z_{p})\cap N(y_{-p})$ such that $v^{(1)}$ is not singular and $v\notin \{y_0,y_{-1},\ldots ,y_{-k+1},z_{k-1},\ldots ,z_1,z_0\}$. Then $vy_{-p}\cdots y_{-k+1}z_{k-1}\cdots z_p$ forms a non-singular odd cycle of length $2k-2p+1\le 2k-1$ in $G$, a contradiction to~\cref{claim:GNonSingularCycles}. This finishes the proof of~\cref{lemma:main}.

\subsection{Lower bounds for~\cref{thm:SharpThreshold}}\label{sec:LowerBound}

Let $k\ge 2$ be an integer. The class $\mathscr{A}_{k}$ of Andr\'{a}sfai graphs consists of all graphs $G=(V,E)$, where $V$ is a finite subset of the unit circle $\mathbb{R}{/ }\mathbb{Z}$ and a pair of vertices in $G$ are adjacent if and only if the distance between them in $\mathbb{R}{/ }\mathbb{Z}$ is larger than $\frac{k-1}{2k-1}$. Let $r\in\mathbb{N}$, $N=(2k-1)(r-1)+2$ and $A_{k,r}$ be the graph from $\mathscr{A}_{k}$ having the corners of a regular $N$-gon as its vertices. 
Denote the vertex set of $A_{k,r}$ to be $V_{k,r}: = [N]$.
For each $x\in V_{k,r}$, the neighborhood of $x$ is $N(x): = \{ x+t \mod N: t\in [(k-1)(r-1)+1, k(r-1)+1] \}$.
It is easy to see that the graph $A_{k,r}$ is $r$-regular and $\mathscr{C}_{2k-1}$-free (see e.g.~\cite[Proposition~2.2]{2020COMBHomoOddCycle}) and has minimum degree at least $\frac{N-2}{2k-1}+1\ge\frac{N}{2k-1}$. 

We present some additional properties of $A_{k,r}$. 

\begin{prop}\label{prop:Akr}
    Let $k\ge 2$ and $r\ge 1$ be integers. The graph $A_{k,r}$ has the following properties.
    \begin{enumerate}        
        \item[\textup{(1)}] For any two distinct vertices $u$ and $v$ in $V_{k,r}$, $N(u)\neq N(v)$. Consequently, it is not a blowup of any smaller graph.
        \item[\textup{(2)}] $A_{k,r}$ is maximal $C_{2k-1}$-free.
        \item[\textup{(3)}] $\textup{VC}(A_{k,r})\le 2$.
    \end{enumerate} 
\end{prop}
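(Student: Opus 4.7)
The plan is to treat the three claims separately, leveraging the Cayley structure of $A_{k,r}$: it is the circulant graph on $\mathbb{Z}/N\mathbb{Z}$ with $N = (2k-1)(r-1)+2$ and connection set $T = [(k-1)(r-1)+1,\, k(r-1)+1]$, an arc of $r$ consecutive residues that is symmetric under negation modulo $N$. Hence every neighborhood $N(v) = v + T$ is a length-$r$ arc on the cyclic vertex set, which is the structural input driving all three arguments.

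For (1), since $T$ is a proper arc (as $r < N$), its distinct translates in $\mathbb{Z}/N\mathbb{Z}$ are distinct subsets, so $N(u) = N(v)$ forces $u \equiv v \pmod{N}$. The blowup corollary is then standard: if $A_{k,r} \cong H[\cdot]$ with $|V(H)| < N$, then some part of the blowup contains two vertices, which would be twins sharing the same neighborhood, contradicting the distinctness just proved.

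For (2), the $C_{2k-1}$-freeness of $A_{k,r}$ is built in (or obtained from \cref{lemma:NoSmallOddCycles}). For maximality, fix a non-edge $uv$ with $d := v - u \bmod N \notin T$; I want to construct a $(u,v)$-path of length $2k-2$ in $A_{k,r}$, which together with $uv$ closes up into a $C_{2k-1}$. Setting $s = (k-1)(r-1)$ and $T' = [s+1, s+r]$, I consider walks of length $2k-2$ with exactly $k$ positive and $k-2$ negative steps drawn from $T'$. The integer sums $\sum x_i - \sum y_j$ then fill precisely the interval $[k(r-1)+2,\, (3k-2)(r-1)+2]$ of length $(2k-2)(r-1)+1$; using the key identity $(s+r)+(s+1) = N$, this interval reduces modulo $N$ to $\mathbb{Z}/N\mathbb{Z} \setminus T$, hitting every non-edge displacement. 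The main obstacle will be upgrading such a walk to a simple path. The extremal choice $x_i = s+r$, $y_j = s+1$ is instructive: the partial sums are $\{i(s+r) \bmod N\}_{i=0}^{k}$ followed by $\{k(s+r) - j(s+1) \bmod N\}_{j=1}^{k-2}$, and combining $\gcd(s+r, N) = \gcd(s+1, N) = 1$ with $s+r \equiv -(s+1) \pmod{N}$ one checks these $2k-1$ residues are pairwise distinct. For an arbitrary non-edge target $d$, a careful perturbation within $T'$ of the extremal choice (exploiting the flexibility in distributing the required adjustment $\sum \delta_i + \sum \eta_j \in [0, (2k-2)(r-1)]$) preserves distinctness, yielding the desired copy of $C_{2k-1}$ in $A_{k,r} + uv$.

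For (3), I would argue by contradiction: suppose three vertices $a, b, c$, in cyclic order, are shattered, and let $\alpha, \beta, \gamma$ denote the three cyclic gaps between them, summing to $N$. Realizing the subset $\{a,c\}$ (omitting $b$) requires an arc of length $r$ threading the $\gamma$-gap, so $\gamma \leq r-1$. Realizing the full set $\{a,b,c\}$ requires the complementary arc of length $N-r$ to fit inside a single gap, so $\max(\alpha,\beta,\gamma) \geq N - r + 1 = 2(k-1)(r-1) + 2$. Since $\gamma$ is too small, the maximum is attained by $\alpha$ or $\beta$; by the symmetry of the labeling, say $\alpha$. Then $\beta + \gamma = N - \alpha \leq r-1$, whereas realizing the singleton $\{c\}$ alone requires an arc of length $r$ containing $c$ but avoiding both $a$ and $b$, forcing $\beta + \gamma \geq r+1$. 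This contradiction rules out any shattered 3-set, so $\VC(A_{k,r}) \leq 2$.
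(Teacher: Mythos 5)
Your proposal follows essentially the same route as the paper on all three parts: (1) is immediate from neighborhoods being distinct translates of an arc; (2) reduces, exactly as in the paper, to showing that every non-edge displacement is the total of $2k-2$ steps from the connection set (your split into $k$ positive and $k-2$ negative steps is equivalent to the paper's all-positive formulation, since $T=-T$ modulo $N$, and your interval $[k(r-1)+2,(3k-2)(r-1)+2]$ checks out); and (3) is the same arc argument in a mildly different dress, yours via the three cyclic gaps, the paper's via first confining the shattered triple to an arc of length $r-1$. The one place you go beyond the paper is in flagging that the length-$(2k-2)$ walk in (2) must be upgraded to a simple path; the paper silently identifies walk and path. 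Do note that your ``careful perturbation'' step is genuinely needed and not automatic: since $N=(s+1)+(s+r)$, a block of $2m'$ consecutive steps revisits its starting vertex precisely when its steps sum to $m'N$ (equivalently, average to the midpoint of $T$), which a naive redistribution of the adjustment can produce, so that step still has to be written out carefully --- though this is an omission your argument shares with, rather than adds to, the paper's.
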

\begin{proof}
    Part~(1) follows immediately from the definition of $A_{k,r}$.

    For~(2), recall that $A_{k,r}$ is $C_{2k-1}$-free. It remains to show the maximality, that is, for each pair of non-adjacent vertices $u$ and $v$ in $A_{k,r}$, $u$ and $v$ are connected by a path of length $2k-2$. Without  loss of generality, assume that $u:= 1$ and $v:= u + y$, for some $y\in [(k-1)(r-1)]$. We then aim to find a sequence $S = \{s_0, s_1, s_2,\ldots, s_{2k-2}\}$ of vertices in $V_{k,r}$ such that $s_0 = u$, $s_{2k-2} = v$ and $d_j = s_j - s_{j-1} \mod N$ lies in $[(k-1)(r-1)+1, k(r-1)+1]$ for each $j\in [2k-2]$. Let $d_j = a_j + (k-1)(r-1)+1 \mod N$, where $0\le a_j\le r-1$ for each $j\in [2k-2]$.
    Then to show the existence of such a sequence $S$, it suffices to prove the following equation has a solution.
    \begin{equation*}
        d_1 + d_2 +\cdots + d_{2k-2} = y  \mod N,
    \end{equation*}
    which is equivalent to show that
    \begin{align}\label{eq: 2k-2 path}
        a_1 + a_2 +\cdots + a_{2k-2} 
        &= y  - (2k-2)((k-1)(r-1)+1) \notag \\ 
        &= y + (k-1)(r-1)  \mod N
    \end{align}
has a non-trivial solution. On one hand, we can see that
    \begin{equation*}
        \{a_1 + a_2 +\cdots + a_{2k-2}:  0\le a_j\le r-1 \text{ for each } j\in[2k-2]\} \supseteq [(2k-2)(r-1)].
    \end{equation*}
    on the other hand,
    \begin{equation*}
        y + (k-1)(r-1) \in [(k-1)(r-1), (2k-2)(r-1)]\subseteq [(2k-2)(r-1)].
    \end{equation*}
   Therefore, for any $y\in [(k-1)(r-1)]$,  \eqref{eq: 2k-2 path} has a solution, which corresponds to a path of length $2k-2$ between $u$ and $v$ as desired.
    
    For~(3), let $S\subseteq V_{k,r}$ be the maximum shattered set in $A_{k,r}$. Suppose that $|S|\ge 3$. Take three vertices from $S$, denoted by $x_{1}<x_{2}<x_{3}$. Since $S$ is shattered, for any distinct $i,j\in [3]$, $x_{i}$ and $x_{j}$ have a common neighbor,
    which implies that 
    \begin{align*}
        |x_i - x_j|\le ((k(r-1)+1) - ((k-1)(r-1)+1)) = r-1.
    \end{align*}
  Therefore, without loss of generality, we can assume that $S\subseteq [(k-1)(r-1)+1, k(r-1)+1]$.
    Since $S$ is shattered, there exists a vertex $y\in V_{k,r}$ such that $\{x_1, x_3\}\subseteq N(y)$ while $x_2\notin N(y)$.
    By symmetry, we can assume without loss of generality that $0\le y\le (k-1)(r-1)< x_1$.
    Then we can get
    \begin{align*}
        x_1 - y \ge (k-1)(r-1)+1, \ \ 
        x_3 - y \le k(r-1)+1,
    \end{align*}
    which implies
    \begin{align*}
        (k-1)(r-1)+1 \le x_2 - y \le k(r-1)+1.
    \end{align*}
    Therefore, $x_2\in N(y)$, a contradiction.
    This completes the proof.
\end{proof}

The lower bound $\delta_{\textup{B}}(C_{2k-1})\ge\frac{1}{2k-1}$ in~\cref{thm:SharpThreshold} now immediately follows from~\cref{prop:Akr}.

\begin{rmk}\label{rmk:Kr}
Using $A_{k,r}$, we can construct, for any $r\ge 3$, an $n$-vertex $K_{r}$-free graph $G$ with VC-dimension $O(1)$ and $\delta(G)\ge \frac{2r-5}{2r-3}\cdot n$ such that $G$ has no $K_{r}$-free homomorphic image of size smaller than $\frac{3n}{2r-3}$. To see this, let $X := A_{2,\ell}$ for sufficiently large $\ell$, and let $Y_{1}, Y_{2}, \ldots, Y_{r-3}$ be $r-3$ sets, each of size $|Y_{i}| = \frac{2|X|}{3}$. Define the vertex set of $G$ as \( V(G) = X \sqcup Y_{1} \sqcup \cdots \sqcup Y_{r-3} \), and construct $G$ such that every pair of sets forms a complete bipartite graph. It is straightforward to verify that the graph $G$ is $K_{r}$-free and cannot be homomorphic to a $K_{r}$-free graph of size smaller than $|X| = \frac{3|V(G)|}{2r-3}$ by~\cref{prop:Akr}.
\end{rmk}

\section{Concluding remarks}\label{sec:ConcludingRmks}
In the first part of our paper, we introduce generalized homomorphism thresholds and apply the theory of VC-dimension to resolve this problem for cliques. Our result provides a smooth interpolation between chromatic and homomorphism thresholds for cliques. Note that, by definition, the more restrictions imposed on the homomorphic image, the larger the corresponding thresholds should be. Specifically, for a given \(\mathcal{G}_{1}\), if \(\mathcal{G}_{2} \subseteq \mathcal{G}_{2}'\), then it follows that
\[
\delta_{\textup{hom}}^{\textup{VC}}(\mathcal{G}_{1};\mathcal{G}_{2}) \leq \delta_{\textup{hom}}^{\textup{VC}}(\mathcal{G}_{1};\mathcal{G}_{2}').
\]
It would be interesting to find more examples of \(\mathcal{G}_{2} \subseteq \mathcal{G}_{2}'\) for which the above inequality is strict. For example, this is the case for $\C G_{1}=\{K_s\}$ and $\C G_2=\C K_t\subseteq \C K_s=\C G_2'$, where
$\C K_r=\{K_{\ell} \}_{\ell \geq r}$, by~\cref{thm:KsKt}; and for  $\C G_{1}=\C G_2'=\C C_{2k+1}$ and $\C G_2=\C C_{2k-1}$ by~\cref{thm:ByproductOne}. 

In the second part, we study the blowup thresholds for graphs and prove that $\delta_{\textup{B}}(C_{2k-1})=\frac{1}{2k-1}$, showing that 0 is an accumulation point for blowup thresholds. 
Recall that $\delta_{\chi}(H)\le \delta_{\textup{hom}}(H)\le \delta_{\textup{B}}(H)$. We propose the following bold yet intuitive conjecture.

\begin{conj}
    For any graph $H$, $\delta_{\textup{hom}}(H)=\delta_{\textup{B}}(H)$.
\end{conj}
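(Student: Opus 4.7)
The inequality $\delta_{\textup{hom}}(H) \le \delta_{\textup{B}}(H)$ is immediate: given $\alpha > \delta_{\textup{B}}(H)$ and an $H$-free graph $G$ with $\delta(G) \ge \alpha n$, extend $G$ to a maximal $H$-free $G'$ with $\delta(G') \ge \alpha n$, write $G' = F[\cdot]$ for a bounded $F$, note that $F$ is $H$-free (as picking one vertex per part embeds $F$ as a subgraph of $G'$), and pull back the natural homomorphism $G' \to F$ to $G$. So the content lies in the reverse inequality $\delta_{\textup{B}}(H) \le \delta_{\textup{hom}}(H)$. Fix $\alpha > \delta_{\textup{hom}}(H)$ and let $G$ be an $n$-vertex maximal $H$-free graph with $\delta(G) \ge \alpha n$. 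By definition there is a homomorphism $\varphi \colon G \to F$ where $F$ is $H$-free and $|F| \le C = C(\alpha, H)$; the fibers $V_v := \varphi^{-1}(v)$ partition $V(G)$ into $C$ independent sets. My goal is to refine this partition into $O_{\alpha, H}(1)$ twin classes, which by definition exhibits $G$ as a blowup of a bounded-size graph.

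The plan is to run an iterative refinement in the spirit of \cref{def:PartitionRules}. Starting from $\mathscr{P}_0 := \{V_v\}_{v \in V(F)}$, define $\mathscr{P}_{i+1}$ by placing $u, v$ in the same class iff they were already together in $\mathscr{P}_i$ and, for every class $Q$ of $\mathscr{P}_i$, $N(u) \cap Q = \emptyset$ iff $N(v) \cap Q = \emptyset$. This produces partitions of size at most $\textup{tw}_{i+1}(C)$. I would then try to prove that there is a bounded $t = t(\alpha, H)$ after which $\mathscr{P}_t$ consists entirely of twin classes, so that $G = (G / \mathscr{P}_t)[\cdot]$ with quotient of bounded size.

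Arguing that $\mathscr{P}_t$ really is the twin partition is where maximality enters. Suppose $u, v$ lie in a common class of $\mathscr{P}_t$ but are not twins, and pick $w \in N(u) \setminus N(v)$. Since $vw$ is a non-edge and $G$ is maximal $H$-free, $G + vw$ contains a copy of $H$ through $vw$. One then attempts a swapping argument: the fact that $u$ and $v$ have identical coarse neighborhood patterns across $\mathscr{P}_{t-1}$ should allow one to replace $v$ by $u$ in this near-$H$ configuration, producing a copy of $H$ already present in $G$, a contradiction. An alternative (and perhaps more robust) route is a stability/compactness reduction: a sequence of non-stabilizing examples $G_n$ would yield in the limit a configuration inconsistent with the bounded $H$-free homomorphic image guaranteed by $\delta_{\textup{hom}}(H) < \alpha$.

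The main obstacle is establishing such stabilization for a general $H$. In the clique and odd-cycle cases, the arguments route crucially through bounded VC-dimension (\cref{lemma:Bounded VC-dimension}) together with $H$-specific structural input (non-singular $C_{2\ell-1}$-freeness inside \cref{lemma:main}, Haussler packing via \cref{lemma:Partition}), neither of which is automatic for arbitrary $H$. In particular, even proving that maximal $H$-free graphs with $\delta(G) > \delta_{\textup{hom}}(H) \cdot n$ must have bounded VC-dimension appears to require a structural theorem of its own. Since the conjecture would in particular imply $\delta_{\textup{hom}}(C_{2k-1}) = \frac{1}{2k-1}$, sharpening the currently open separation between chromatic and homomorphism thresholds given by Sankar~\cite{2022Maya}, I expect the general case to demand genuinely new ideas rather than a direct extension of the methods developed here.
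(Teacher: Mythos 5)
You should be aware that this statement is presented in the paper as an open conjecture: the authors announce it as ``bold yet intuitive'' and give no argument for it beyond the definitional remark $\delta_{\chi}(H)\le\delta_{\textup{hom}}(H)\le\delta_{\textup{B}}(H)$. There is therefore no proof in the paper to compare against, and --- read carefully --- your proposal does not claim to close the gap either.

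Your treatment of the easy inequality $\delta_{\textup{hom}}(H)\le\delta_{\textup{B}}(H)$ is correct and matches the paper's unstated reasoning: extend $G$ to a maximal $H$-free supergraph $G'$ on the same vertex set, which only raises the minimum degree; write $G'=F[\cdot]$ with $F$ bounded; observe that the subgraph of $F$ induced on the nonempty parts is $H$-free, since one vertex per part realizes it as an induced subgraph of $G'$; and pull back the projection homomorphism to $G$. (Strictly speaking one needs a single target graph uniform over all $G$, which is a finite bookkeeping step and, for disconnected $H$, needs a small extra argument, but this is not the substance.) The substance lies in the reverse inequality, and here your iterated-refinement sketch is an honest attack but, as you yourself flag, leaves the two load-bearing steps unproved: that the refinement sequence $\mathscr{P}_0,\mathscr{P}_1,\ldots$ stabilizes at the twin partition after boundedly many rounds, and that a swap of $v$ for $u$ transports an $H$-copy in $G+vw$ to one already in $G$. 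In the paper's concrete cases these come from $H$-specific structure --- bounded VC-dimension of dense maximal $C_{2k-1}$-free graphs (\cref{lemma:Bounded VC-dimension}), the non-singular odd-cycle bookkeeping inside \cref{lemma:main}, and Haussler packing via \cref{lemma:Partition} --- none of which is available for arbitrary $H$. Indeed the paper isolates exactly this obstacle in \cref{sec:vc-threshold}, posing the determination of $\delta_{\textup{VC}}(H)$ as \cref{ques:CR} and conjecturing separately that $\delta_{\textup{VC}}(H)=\delta_{\textup{B}}(H)$. So the proposal is not a proof of the conjecture and should not be graded as one; but your diagnosis of where the difficulty sits is accurate and consistent with the paper's own discussion.
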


The methods and results presented here motivate several other interesting directions to explore.

\subsection{Hitting all small odd cycles}
Several problems in combinatorics investigate how to destroy a specific structure by removing as few vertices as possible. For example, the famous Erd\H{o}s–P\'{o}sa theorem~\cite{1965ErdosPosa} asserts that if a graph does not contain $k$ vertex-disjoint cycles, then one can remove at most $O(k\log{k})$ vertices so that the remaining graph contains no cycles. An old conjecture of Bollob\'{a}s, Erd\H{o}s and Tuza~\cite{1991ErdosCollection} states that for any dense graph $G$ with $\alpha(G)=\Omega(|G|)$, one can remove at most $o(|G|)$ vertices to ensure the remaining graph contains no independent set of size $\alpha(G)$.

As another application of~\cref{lemma:main}, we establish a result of this type, showing that in a dense maximal $C_{2k-1}$-free graph, one can remove only a constant number of vertices to eliminate all odd cycles of length at most $2k-1$.

\begin{theorem}\label{thm:ByproductThree}
    For given integer $k\geq 3$ and any $\varepsilon>0$, let $G$ be an $n$-vertex maximal $C_{2k-1}$-free graph with minimum degree at least $(\frac{1}{2k-1}+\varepsilon)n$, then one can remove at most $O_{\varepsilon,k}(1)$ vertices to make it $\mathscr{C}_{2k-1}$-free.
\end{theorem}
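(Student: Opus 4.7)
The plan is to combine~\cref{lemma:main}(1) with a simple structural observation: under the given hypotheses every short odd cycle of $G$ must sit entirely inside the ``singular'' part of the canonical partition, and that part has bounded size.

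\textit{Setup.} By~\cref{lemma:Bounded VC-dimension}, $\VC(G)\le d$ for some $d=d(k)$, and by~\cref{lemma:NoSmallOddCycles} (valid because $k\ge 3$, so $k\le 2k-3\le 2k-1$) $G$ is $C_{2k-3}$-free. Setting $\eps'=\eps/(2k)$ we have $\delta(G)\ge(\tfrac{1}{2k-1}+2k\eps')n$, so~\cref{lemma:main}(1) supplies a homomorphism $\varphi: G\to H:=H_k(G,\eps'/3)$ with $|H|=O_{\eps,k}(1)$, such that $H$ is $C_{2k-1}$-free and contains no non-singular $C_{2\ell-1}$ for any $2\le\ell\le k-1$.

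\textit{The hitting set.} Let $T=\{v\in V(G):v^{(1)}\text{ is singular in }\mathscr{P}_1(G,\eps'/3)\}$. Because $\mathscr{P}_1$ has at most $O_{\eps,k}(1)$ classes and each singular class contributes exactly one vertex, $|T|=O_{\eps,k}(1)$. I will argue that $G-T$ is $\mathscr{C}_{2k-1}$-free, which proves the theorem.

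\textit{Why every short odd cycle lies in $T$.} Suppose $C=v_1v_2\cdots v_{2\ell-1}v_1$ is an odd cycle of length $2\ell-1\le 2k-1$ in $G$; I show that every $v_i$ belongs to $T$. The image $\varphi(C)$ is a closed walk of length $2\ell-1$ in $H$. If it is a cycle, then $C_{2k-1}$-freeness of $H$ forces $\ell\le k-1$, and the absence of non-singular $C_{2\ell-1}$ in $H$ makes this cycle singular, so each $v_i^{(1)}$ is a singleton and $v_i\in T$. If $\varphi(C)$ is not a cycle, then~\cref{claim:odd walk to odd cycle} produces an odd cycle $C'$ of length at most $2\ell-3\le 2k-3$ inside the walk together with a vertex $u\in V(C')$ that appears at least twice in the sequence $\varphi(v_1),\ldots,\varphi(v_{2\ell-1})$; this repetition forces two of the $v_i$'s to share the $\mathscr{P}_k$-class (hence the $\mathscr{P}_1$-class) corresponding to $u$, so $u^{(1)}$ is non-singular and $C'$ is a non-singular short odd cycle in $H$, contradicting the properties of $H$ guaranteed by~\cref{lemma:main}. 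Hence the non-cycle case cannot occur, and any short odd cycle of $G$ must be contained in $T$; removing $T$ destroys all of them.

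The main conceptual ingredient is the reduction to~\cref{lemma:main}; once that is in hand, the rest is a short packaging exercise and no substantive obstacle remains. The one arithmetic point worth monitoring is the rescaling $\eps\mapsto \eps/(2k)$ required to match the hypothesis of~\cref{lemma:main}(1).
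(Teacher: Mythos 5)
Your proof is correct, and it rests on the same two ingredients as the paper's: reduce to~\cref{lemma:main}(1) via~\cref{lemma:Bounded VC-dimension} and~\cref{lemma:NoSmallOddCycles}, and then observe that the offending odd cycles all lie inside the $O_{\varepsilon,k}(1)$ vertices that sit in singular $\mathscr{P}_1$-classes. The one place where you diverge is that you route the argument through the homomorphic image $H$: you pull back each short odd cycle $C$ of $G$ to a closed walk $\varphi(C)$ in $H$, then split into a ``cycle'' and ``non-cycle'' case, invoking~\cref{claim:odd walk to odd cycle} in the latter to manufacture a non-singular short odd cycle in $H$ and force a contradiction. That case analysis is sound, but it is an unnecessary detour: \cref{lemma:main} already states, as a separate conclusion, that $G$ \emph{itself} is non-singular $C_{2\ell-1}$-free for every $2\le\ell\le k-1$. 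Combined with $G$ being $C_{2k-1}$-free (given) and $C_{2k-3}$-free (from~\cref{lemma:NoSmallOddCycles}), this immediately implies that every odd cycle of length at most $2k-1$ in $G$ is singular, hence entirely inside your set $T$, with no need to pass through $H$ or to reargue the walk-to-cycle extraction. What you have effectively done is re-derive the ``$G$ is non-singular $C_{2\ell-1}$-free'' clause from the ``$H$ is non-singular $C_{2\ell-1}$-free'' clause, duplicating work that is already done inside the proof of~\cref{lemma:main}. Everything else — the $\varepsilon\mapsto\varepsilon/(2k)$ rescaling, the bound $|T|\le|\mathscr{P}_1|=O_{\varepsilon,k}(1)$, and the observation that $\varphi(v_i)^{(1)}=v_i^{(1)}$ so singularity of the image cycle forces $v_i\in T$ — is fine.
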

\begin{proof}
    Without loss of generality, we can assume $G$ is maximal $C_{2k-1}$-free. Since $\delta(G)\ge(\frac{1}{2k-1}+\varepsilon)n$, $\textup{VC}(G)$ is at most some $d=O_{\varepsilon,k}(1)$ by~\cref{lemma:Bounded VC-dimension}. Note also that $G$ is $C_{2k-3}$-free by~\cref{lemma:NoSmallOddCycles}. Therefore $G$ satisfies the first condition in~\cref{lemma:main}, then $G$ is homomorphic to $H_{k}(G,\frac{\varepsilon}{6k})$ and moreover $G$ is also non-singular $C_{2\ell-1}$-free for $2\le\ell\le k-1$. As every singular cycles in $G$ contains vertices from singular classes of $\mathscr{P}_{1}$ and there are at most $|\mathscr{P}_{1}|=O_{\varepsilon,k}(1)$ such vertices, removing them would make $G$ $\mathscr{C}_{2k-1}$-free.
\end{proof}

It was shown in~\cite{2023Skokan3colorable} that given $k,t\in\mathbb{N}$ with $k\ge 5490+45t$, any $\mathscr{C}_{2k-1}$-free graph $G$ with minimum degree at least $\frac{|V(G)|}{2k+t}$ is $3$-colorable. This, together with the above theorem immediately implies the following.

\begin{cor}\label{cor:ByproductThree}
    For given integer $k\ge 5535$ and any $\varepsilon>0$, let $G$ be an $n$-vertex $C_{2k-1}$-free graph with minimum degree at least $(\frac{1}{2k-1}+\varepsilon)n$, there exists a subset $T$ with $|T|= O_{k,\varepsilon}(1)$ such that $\chi(G\setminus T)\le 3$.
\end{cor}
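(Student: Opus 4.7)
The plan is to derive the corollary directly from Theorem~\ref{thm:ByproductThree} combined with the $3$-colorability result of~\cite{2023Skokan3colorable} cited immediately above the statement, which asserts that every $\mathscr{C}_{2k-1}$-free graph with minimum degree at least $|V|/(2k+t)$ is $3$-colorable whenever $k\ge 5490+45t$. So the proof is essentially a two-step pipeline: first kill all short odd cycles by removing a constant-size vertex set, then apply the $3$-coloring theorem to the leftover graph.

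First I would apply Theorem~\ref{thm:ByproductThree} to $G$, producing a subset $T\subseteq V(G)$ with $|T|=O_{k,\varepsilon}(1)$ such that $G':=G\setminus T$ is $\mathscr{C}_{2k-1}$-free. The only remaining task is to verify that $G'$ satisfies the minimum degree hypothesis required to invoke the theorem of~\cite{2023Skokan3colorable} for some admissible $t$. Since we delete only $O_{k,\varepsilon}(1)$ vertices,
$$
\delta(G')\ \ge\ \delta(G)-|T|\ \ge\ \Big(\tfrac{1}{2k-1}+\varepsilon\Big)n - O_{k,\varepsilon}(1).
$$
Setting $t=1$, the hypothesis $k\ge 5535=5490+45\cdot 1$ matches exactly the threshold required by~\cite{2023Skokan3colorable}, so it is enough to check
$$
\delta(G')\ \ge\ \frac{|V(G')|}{2k+1}.
$$
Because $\tfrac{1}{2k-1}>\tfrac{1}{2k+1}$, this inequality holds for all sufficiently large $n$, with the slack comfortably absorbing both the $O_{k,\varepsilon}(1)$ loss and the replacement of $n$ by $|V(G')|=n-|T|$.

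Applying the theorem of~\cite{2023Skokan3colorable} to $G'$ then gives $\chi(G\setminus T)=\chi(G')\le 3$, as required. There is no serious obstacle beyond aligning the two ingredients: the positive slack $\varepsilon$ is exactly what both triggers Theorem~\ref{thm:ByproductThree} and allows the minimum degree condition of the $3$-coloring result to survive the removal of the constant-size set $T$, and the numerical assumption $k\ge 5535$ is simply the smallest integer for which~\cite{2023Skokan3colorable} permits a nontrivial choice $t\ge 1$.
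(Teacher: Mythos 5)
Your proposal is correct and follows exactly the paper's intended (and only sketched) argument: apply Theorem~\ref{thm:ByproductThree} to delete a set $T$ of $O_{k,\varepsilon}(1)$ vertices so that $G\setminus T$ is $\mathscr{C}_{2k-1}$-free, note that the minimum degree of $G\setminus T$ still comfortably exceeds $|V(G\setminus T)|/(2k+1)$ because $\tfrac{1}{2k-1}>\tfrac{1}{2k+1}$ and only constantly many vertices were removed, and then invoke the $3$-colorability theorem of~\cite{2023Skokan3colorable} with $t=1$, which is exactly why the threshold is $k\ge 5490+45=5535$. The only pedantic caveat, which the paper itself glosses over, is that Theorem~\ref{thm:ByproductThree} is stated for \emph{maximal} $C_{2k-1}$-free graphs while the corollary assumes only $C_{2k-1}$-freeness; this is harmless since the proof of Theorem~\ref{thm:ByproductThree} opens with a WLOG reduction to the maximal case (add edges to reach a maximal supergraph $G^{+}$, remove $T$ making $G^{+}\setminus T$ $\mathscr{C}_{2k-1}$-free, and observe $G\setminus T\subseteq G^{+}\setminus T$ inherits the property), so the theorem in fact applies to general $C_{2k-1}$-free graphs.
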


\subsection{When does a maximal $H$-free graph have bounded VC-dimension?}\label{sec:vc-threshold}
As discussed in the introduction, the bottleneck for chromatic threshold for cliques is the VC-dimension, not the minimum degree. More precisely, for a graph $H$, let $\delta_{\textup{VC}}(H)$ be the infimum of $\alpha$ such that any maximal $H$-free graph $G$ with $\delta(G)\ge \alpha n$ has bounded VC-dimension $O_{\alpha, H}(1)$. Observe that by definition, we have for every $\star\in\{\chi,\textup{hom},\textup{B}\}$,
\begin{equation}\label{eq:vc}
 \delta_{\star}(H)\le \max\{\delta_{\textup{VC}}(H),~ \delta_{\star}^{\textup{VC}}(H)\}.    
\end{equation}

For cliques, equality above holds. The difference is that for $\delta_{\chi}$, the bottleneck is $\delta_{\textup{VC}}$ as  
$$\delta_{\chi}(K_s)=\delta_{\textup{VC}}(K_s)=\frac{2s-5}{2s-3}>\frac{s-3}{s-2}=\delta_{\chi}^{\textup{VC}}(K_s),$$
whereas for $\delta_{\textup{hom}}$ and $\delta_{\textup{B}}$ all three in~\eqref{eq:vc} are the same.
Here $\delta_{\textup{VC}}(K_s)= \delta_{\chi}(K_s)$ follows from $\delta_{\chi}(K_s)>\delta_{\chi}^{\textup{VC}}(K_s)$. 

It would be interesting to understand $\delta_{\textup{VC}}$ better as its value together with~\eqref{eq:vc} shows the effect of VC-dimension for different thresholds. 

\begin{problem}\label{ques:CR}
 For given graph $H$, determine $\delta_{\textup{VC}}(H)$.
\end{problem}

In particular, for odd cycles, we have $\delta_{\textup{VC}}(C_{2k-1})\le \frac{1}{2k-1}$ by~\cref{lemma:Bounded VC-dimension}. It remains open whether $\delta_{\textup{VC}}(C_{2k-1})=\frac{1}{2k-1}$. 
In general, for any graph $H$, if a maximal $H$-free $G$ is a blowup of another graph of bounded size, then $G$ obviously has bounded VC-dimension, therefore $\delta_{\textup{VC}}(H)\le \delta_{\textup{B}}(H)$ always holds. We suspect that they always equal.
\begin{conj}
    For any graph $H$, $\delta_{\textup{VC}}(H)=\delta_{\textup{B}}(H)$.
\end{conj}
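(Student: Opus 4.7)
The easy direction $\delta_{\textup{VC}}(H)\le \delta_{\textup{B}}(H)$ is immediate: a blowup of an $F$ with $|F|=O(1)$ has VC-dimension at most $|F|$. So the plan tackles the reverse inequality. Fix $\alpha>\delta_{\textup{VC}}(H)$ and let $G$ be an $n$-vertex maximal $H$-free graph with $\delta(G)\ge \alpha n$; by hypothesis $\textup{VC}(G)\le d=d(\alpha,H)$. The goal is to exhibit a bounded graph $F$ with $G=F[\cdot]$, mirroring the framework developed in Section~\ref{sec:OddCycles} for odd cycles, but with the maximality-driven structural dichotomies replaced by a general twin-replacement principle.

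The first step is to run the iterative partitioning machinery of~\cref{def:PartitionRules} with $\gamma=\gamma(\alpha,H)$ chosen small enough relative to the minimum-degree slack. Haussler's partition lemma (\cref{lemma:Partition}) produces a bounded partition $\mathscr{P}_0(G,\gamma)$ in which any two vertices of the same class differ in at most $\gamma n$ neighbors; iterating the refinement $k=k(H)$ times produces a bounded partition $\mathscr{P}_k(G,\gamma)$ in which vertices of a common refined class have identical $0/1$ adjacency pattern to every class of $\mathscr{P}_{k-1}(G,\gamma)$. The quotient map $v\mapsto v^{(k)}$ automatically yields a homomorphism from $G$ to $H_k(G,\gamma)$, and $|V(H_k(G,\gamma))|=O_{\alpha,H}(1)$. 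The conjecture reduces to the assertion that this homomorphism is an isomorphism onto the blowup $H_k(G,\gamma)[\cdot]$, i.e.\ any two adjacent refined classes induce a complete bipartite graph, and any two refined classes are either completely adjacent or anti-adjacent.

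The crux is a local twin-replacement lemma. Suppose $u_1,u_2$ lie in the same refined class $P$, $v$ lies in a refined class $Q$ with $PQ\in E(H_k(G,\gamma))$, and $u_1v\in E(G)$ but $u_2v\notin E(G)$. Maximality of $G$ yields a copy of $H$ in $G+u_2v$ using the edge $u_2v$; call this an $H$-\emph{certificate} $C$. Using that $u_1,u_2$ share more than $(\alpha-\gamma-o(1))n$ common neighbors (an analogue of the bound used in~\eqref{eq:sameclass}), one would like to reroute $C$ by substituting $u_1$ for $u_2$ via common neighbors of $u_1,u_2$ inside $C$, producing an honest copy of $H$ in $G$ and a contradiction. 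For cliques the certificate is a clique minus an edge, and this reduces to the clique-building argument of Section~\ref{sec:upp-bd-KsKt}; for odd cycles the certificate is a path of prescribed length, and the non-singular cycle machinery of~\cref{lemma:main} is exactly this transplant dressed up in walk-theoretic language.

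The main obstacle is that for general $H$ the certificate $C$ can be essentially arbitrary, and no single rerouting procedure replaces the rigidity of cliques or paths. In particular, the "substitute a common neighbor at a distinguished vertex of $C$" move may fail for multiple vertices of $C$ simultaneously, and the path-shortening trick that worked for odd cycles exploited a one-dimensional structure that disappears in higher-genus $H$. A plausible first target is $H$ admitting a critical edge $e$, where $H-e$ plus the placement of $e$ rigidifies every $H$-certificate; another natural target is complete multipartite $H$, for which the clique-style iteration of Section~\ref{sec:upp-bd-KsKt} should give a cleaner handle and in particular would settle $H=C_{2k-1}$ by pinning $\delta_{\textup{VC}}(C_{2k-1})=\tfrac{1}{2k-1}$. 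A counterexample, if it exists, would presumably come from an $H$ for which some maximal $H$-free graph admits many genuinely distinct neighborhoods clustered into few VC-packing classes; constructing such an example would itself illuminate the role of maximality in the blowup threshold.
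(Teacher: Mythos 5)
The statement you are attempting is posed in the paper as an open \emph{conjecture} in the concluding remarks, not as a theorem; the paper offers no proof. The only part established there is the easy inequality $\delta_{\textup{VC}}(H)\le \delta_{\textup{B}}(H)$, stated in the two sentences immediately before the conjecture: a maximal $H$-free graph that is a blowup of a bounded-size $F$ trivially has bounded VC-dimension. Your treatment of this direction agrees with the paper's (with a small tightening available: a graph with at most $|F|$ distinct neighborhoods has VC-dimension at most $\log_2|F|$, not merely $|F|$, though either bound suffices for the argument).

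For the converse $\delta_{\textup{B}}(H)\le\delta_{\textup{VC}}(H)$, which is the real content of the conjecture, you correctly reconstruct the machinery the paper deploys in its proven special cases --- Haussler's packing lemma via \cref{lemma:Partition}, the iterated refinement of \cref{def:PartitionRules}, and the quotient map $v\mapsto v^{(k)}$ onto $H_k(G,\gamma)$ --- and you correctly locate the missing ingredient: an $H$-specific rigidity or ``certificate-rerouting'' step, supplied for $C_{2k-1}$ by the non-singular cycle analysis in \cref{lemma:main} and implicitly for $K_s$ by the clique-building argument of \cref{sec:upp-bd-KsKt}, for which no general substitute is known. That is precisely why the paper leaves this as a conjecture. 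Your proposal is therefore an accurate diagnosis of an open problem rather than a proof, and you are candid about this; the one improvement I would suggest is to frame the second half explicitly as a heuristic roadmap, so that a reader does not mistake the description of obstacles for an argument that overcomes them.
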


\section*{Acknowledgement}
This work was initiated during the summer program organized by the IBS ECOPRO group, which was supported by the IBS-R029-C4 grant. Xinqi Huang and Mingyuan Rong are deeply grateful to Prof. Hong Liu for providing such an excellent opportunity. The authors would like to express their gratitude to Prof. Mathias Schacht, Prof. Chong Shangguan, and Fankang He for their valuable discussions, with special thanks to Fankang He for informing them that the graphs in Sankar's construction~\cite{2022Maya} have bounded VC-dimension. Zixiang Xu would like to extend special thanks to Yixuan Zhang for the discussions regarding $C_{5}$. Mingyuan Rong would like to thank Prof. Jie Ma for helpful comments. Xinqi Huang would like to thank Prof. Xiande Zhang for helpful comments and thank Prof. Jin Yan and Prof. Yuefang Sun for attending his presentation on this topic.

\bibliographystyle{abbrv}
\bibliography{homoC2k-1}

\end{document}